\newcommand{\xupref}[2]{\hspace{-0.3ex}\stackrel{\eqref{#1}}{#2}} 
\newcommand{\upupref}[3]{\hspace{-3ex}\stackrel{\eqref{#1},\eqref{#2}}{#3}}
\theoremstyle{plain}
\newtheorem{theorem}{Theorem}[section]
\newtheorem{lemma}[theorem]{Lemma}
\theoremstyle{definition}
\theoremstyle{remark}
\newtheorem{remark}[theorem]{Remark}
\numberwithin{equation}{section}
\newcommand{\e}{\varepsilon}
\newcommand{\Z}{\mathbb Z}
\newcommand{\R}{\mathbb R}
\newcommand{\de}{\,\mathrm{d}}
\renewcommand{\setminus}{\backslash}
\newcommand{\inte}{\int_{-\infty}^\infty}
\newcommand{\expyz}{e^{\frac{y-z}{\rho}}}
\newcommand{\expay}{e^{-\frac{a}{\rho}y}}
\newcommand{\expaz}{e^{-\frac{a}{\rho}z}}
\newcommand{\cbar}{\bar{c}}
\title[Self-similar solutions to coagulation equations with time-dependent tails]{Self-similar solutions to coagulation equations\\ with time-dependent tails: the case of homogeneity smaller than one}
\author{Marco Bonacini \and Barbara Niethammer \and Juan J. L. Vel\'{a}zquez}
\date{\today}
\address{Rheinische Friedrich-Wilhelms-Universit\"at Bonn, Institut f\"ur Angewandte Mathematik, Endenicher Allee 60, 53115 Bonn, Germany}
\email{bonacini@iam.uni-bonn.de; niethammer@iam.uni-bonn.de; velazquez@iam.uni-bonn.de}
\keywords{Smoluchowski's equation, self-similar solutions, time-dependent tails}
\begin{document}

\begin{abstract}
We prove the existence of a one-parameter family of self-similar solutions with time-dependent tails for Smoluchowski's coagulation equation, for a class of rate kernels $K(x,y)$ which are homogeneous of degree $\gamma\in(-\infty,1)$ and satisfy $K(x,1)\sim x^{-a}$ as $x\to 0$, for $a=1-\gamma$. In particular, for small values of a parameter $\rho>0$ we establish the existence of a positive self-similar solution with finite mass and asymptotics $A(t)x^{-(2+\rho)}$ as $x\to\infty$, with $A(t)\sim\rho t^\frac{\rho}{1-\gamma}$.
\end{abstract}

\maketitle


\section{Introduction} \label{sect:intro}

Smoluchowski's coagulation equation is a widely used mean-field model, originally derived by Smoluchowski in 1916 \cite{Smo27}, for the description of general coalescence phenomena where particles grow by successive mergers. It applies to homogeneous dilute systems of clusters which are fully identified by their mass and can coagulate through binary collisions. If $f(x,t)$ denotes the number density of clusters of size $x>0$ at time $t$, then the dynamics of $f$ is governed by the equation
\begin{equation} \label{eq:smol}
\partial_t f(x,t) = \frac12\int_0^x K(x-y,y)f(x-y,t)f(y,t)\de y - \int_0^\infty K(x,y)f(x,t)f(y,t)\de y\,,
\end{equation}
where the microscopic details of the specific merging process are contained in the so-called rate kernel $K$.
We refer to the surveys \cite{LM04,Ley03} for a basic physical introduction and an overview of mathematical results on coagulation models, as well as to the references therein.

In this paper we are interested in the study of the class of homogeneous kernels of degree $\gamma\in(-\infty,1)$ characterized by
\begin{equation} \label{kernel0}
K(x,1) \sim \frac{1}{x^a} \quad\text{ as }x\to0,\qquad a=1-\gamma\,.
\end{equation}
This includes, for instance, the general sum kernel $K(x,y)=yx^{-a}+xy^{-a}$.
Such kernels have already been reported as peculiar in the physics literature, see in particular Section~4.2 in \cite{vDE88}, and \cite{vD87}.

A central question in the understanding of the dynamics of \eqref{eq:smol} is whether solutions with finite mass exhibit a universal self-similar form as time tends to infinity:
\begin{equation} \label{selfsim}
f(x,t) \sim f_S(x,t) = \frac{1}{s(t)^2}\Phi\Bigl(\frac{x}{s(t)}\Bigr)\,,
\qquad t\to\infty,
\end{equation}
with the mean particle size $s(t)\to\infty$ as $t\to\infty$ and the self-similar profile $\Phi$ to be determined, depending on the coagulation kernel $K$ but not on the specific initial datum. Despite several formal computations supporting this hypothesis, only partial results are available from the rigorous point of view. These include in particular the case of the three solvable kernels (the constant kernel $K(x,y)=2$, the additive kernel $K(x,y)=x+y$ and the multiplicative kernel $K(x,y)=xy$), for which self-similar solutions can be computed explicitly via Laplace transform and their domain of attraction can be fully characterized, see \cite{CMM10,MP04,Sri11}. However, the existence of self-similar solutions with finite mass has been established also for a large class of kernels homogeneous of degree $\gamma<1$, see in particular \cite{EMR05,FL05}, and some properties of these solutions have been investigated in \cite{CanMisch11,EM06,FL06}. More recently, the first existence results of \textit{fat tail} solutions were obtained in \cite{NV11} for the diagonal kernel, in \cite{NV13} for homogeneous kernels of degree $\gamma\in[0,1)$ satisfying the bound $K(x,y)\leq C(x^\gamma+y^\gamma)$, and in \cite{NTV16} for a class of singular kernels homogeneous of degree $\gamma\in(-\infty,1)$ such that
\begin{equation} \label{eq:intro1}
C_1\bigl( x^{-a}y^b + x^by^{-a} \bigr) \leq K(x,y) \leq C_2 \bigl( x^{-a}y^b + x^by^{-a} \bigr)\,,
\qquad a>0,\, b<1,\, \gamma=b-a,
\end{equation}
including for instance the classical Smoluchowski's kernel. Much less is known about the question of uniqueness of self-similar profiles with given decay behaviour at infinity, which has so far been answered only for a the special case of kernels close to the constant one \cite{NTV16b}.

It is the purpose of this work to establish the existence of a new one-parameter family of nonnegative self-similar solutions with finite mass and time-dependent tails (see \eqref{tails}) for the class of kernels homogeneous of degree $\gamma\in(-\infty,1)$ satisfying \eqref{kernel0}. In particular, this corresponds to the choice $b=1$ in \eqref{eq:intro1}, a case that has not been treated before.
In our main result, see Theorem~\ref{thm:exist}, we show indeed that for every small value of a parameter $\rho\in(0,\rho_*)$, for some $\rho_*>0$, there is such a solution whose asymptotic behaviour at infinity is characterized in terms of $\rho$.

We remark that we performed a similar analysis in \cite{BNV} for rate kernels homogeneous of degree one and not diagonally dominant, obtaining by the same methods used here an analogous family of self-similar solutions with time-dependent tails. However, a significant difference is that in the case considered here the solution exhibits a very strong variation in a small transition layer. This can be seen by working in exponential variables, see \eqref{variables}: in these variables the solution grows rapidly from values close to 0 to values of order one in a small interval of order $\rho$ around the origin. Such behaviour makes the analysis more involved than in the case treated in \cite{BNV}.

\medskip
\subsection*{Self-similar solutions}
We now formulate the precise assumptions on the rate kernels that we consider in this work: $K$ is a continuous, nonnegative and symmetric map, homogeneous of degree $\gamma\in(-\infty,1)$:
\begin{equation} \label{kernel1}
K\in C((0,\infty)\times(0,\infty)), \qquad
K(x,y) = K(y,x) \geq0 \quad\text{for all }x,y\in(0,\infty),
\end{equation}
\begin{equation} \label{kernel2}
K(\alpha x,\alpha y) = \alpha^\gamma K(x,y) \qquad\text{for all }x,y\in(0,\infty),\, \alpha>0.
\end{equation}
Moreover, we assume that the following stronger version of \eqref{kernel0} holds:
\begin{equation} \label{kernel3}
|x^a K(x,1) - 1| \leq K_0 x^\delta
\qquad\text{for every }x\in(0,2),
\qquad a=1-\gamma
\end{equation}
for constants $K_0>0$ and $\delta\in(0,1)$ (we also assume without loss of generality that $\delta<a$).
Notice that, by homogeneity, \eqref{kernel3} also settles the behaviour of the kernel $K(\cdot,1)$ at infinity: indeed,
$K(x,1) = x^\gamma K(1,\frac{1}{x}) \sim x^\gamma x^a = x$ as $x\to\infty$, and more precisely
\begin{equation}\label{kernel4}
K(x,1)\leq (K_0+1) x
\qquad\text{for every }x>1.
\end{equation}

Self-similar solutions with finite mass have the form \eqref{selfsim} for suitable functions $s(t)$ (representing the growth of the average particle size) and $\Phi$ (the self-similar profile): by plugging this \textit{ansatz} into \eqref{eq:smol} and using the fact that the kernel has homogeneity $\gamma=1-a$, we obtain an explicitly solvable ordinary differential equation for $s(t)$
\begin{equation*}
\frac{\de s}{\de t} = b s(t)^\gamma\,,
\end{equation*}
and that the self-similar profile $\Phi$ solves the integro-differential equation
\begin{equation} \label{eq:selfsim0}
b\bigl[ -2\Phi(\xi) - \xi\Phi'(\xi) \bigr] = \frac12\int_0^\xi K(\xi-\eta,\eta)\Phi(\xi-\eta)\Phi(\eta)\de \eta - \int_0^\infty K(\xi,\eta)\Phi(\xi)\Phi(\eta)\de\eta\,,
\end{equation}
for a real constant $b>0$. We get for the mean particle size
\begin{equation} \label{selfsimbis}
s(t) = \bigl((1-\gamma)bt\bigr)^\frac{1}{1-\gamma}\,.
\end{equation}
We next define the notion of a weak solution $\Phi$ to \eqref{eq:selfsim0}: multiplying the equation by $\xi$, after a change of variables and a (formal) application of Fubini's Theorem we obtain that $\Phi$ solves 
\begin{equation*}
b \partial_\xi\bigl( \xi^2 \Phi(\xi) \bigr) = \partial_\xi\biggl( \int_0^\xi\int_{\xi-\eta}^\infty K(\eta,\zeta)\eta\Phi(\eta)\Phi(\zeta)\de \zeta \de \eta \biggr)\,,
\end{equation*}
which, after integration in $\xi$, yields the weaker form of the equation for a self-similar profile:
\begin{equation} \label{eq:selfsim}
b \xi^2 \Phi(\xi) = \int_0^\xi\int_{\xi-\eta}^\infty K(\eta,\zeta)\eta\Phi(\eta)\Phi(\zeta)\de \zeta \de \eta\,.
\end{equation}
We also introduce the mass and the $\gamma$-moment of $\Phi$:
\begin{equation*}
M(\Phi) := \int_0^\infty \xi\Phi(\xi) \de \xi\,,
\qquad
M_\gamma(\Phi) := \int_0^\infty \xi^{\gamma}\Phi(\xi)\de \xi\,.
\end{equation*}

\begin{remark}[Scaling] \label{rm:scaling}
Equation \eqref{eq:selfsim} has the following scale-invariance property: if $\Phi$ solves \eqref{eq:selfsim}, then the rescaled function $\widetilde{\Phi}(\xi) = \lambda \Phi(\mu \xi)$, for $\lambda,\mu >0$, solves
\begin{equation*}
\frac{\lambda b}{\mu^{1+\gamma}} \xi^2 \widetilde{\Phi}(\xi) = \int_0^\xi\int_{\xi-\eta}^\infty K(\eta,\zeta)\eta\widetilde{\Phi}(\eta)\widetilde{\Phi}(\zeta)\de \zeta \de \eta
\end{equation*}
with
\begin{equation*}
M(\widetilde{\Phi}) = \frac{\lambda}{\mu^2} M(\Phi)\,,
\qquad
M_\gamma(\widetilde{\Phi}) = \frac{\lambda}{\mu^{1+\gamma}}M_\gamma(\Phi)\,.
\end{equation*}
Notice in particular that the coefficient $b$ in the equation and the $\gamma$-moment of the solution scale with the same factor - in other words, the equation is left invariant by a rescaling that fixes the $\gamma$-moment. By choosing the two parameters $\lambda$ and $\mu$ we can then normalize the mass and the $\gamma$-moment of the solution.
\end{remark}

\medskip
\subsection*{Main result}
By formal asymptotics (see Section~\ref{sect:asympt}), one finds that there is a one-to-one correspondence between the coefficient $b$ in equation \eqref{eq:selfsim} and the decay behaviour of the corresponding solution $\Phi$. In particular, if $\Phi(\xi)\sim \xi^{-(2+\rho)}$ as $\xi\to\infty$, for $\rho>0$, then
\begin{equation} \label{b}
b = \frac{1+\rho}{\rho} M_\gamma(\Phi)\,.
\end{equation}
Therefore, in view of Remark~\ref{rm:scaling} without loss of generality we look for a normalized solution $\Phi$ to the following problem:
\begin{equation} \label{eq:selfsim2}
\xi^2 \Phi(\xi) = \int_0^\xi \int_{\xi-\eta}^\infty K(\eta,\zeta)\eta \Phi(\eta)\Phi(\zeta)\de \zeta \de \eta\,,
\qquad\text{with } M(\Phi)=1,\quad M_\gamma(\Phi) = \frac{\rho}{1+\rho}\,.
\end{equation}
The main finding of the paper is the following.

\begin{theorem} \label{thm:exist}
Assume that the kernel $K$ satisfies assumptions \eqref{kernel1}, \eqref{kernel2} and \eqref{kernel3}.
Then there exists $\rho_*>0$ such that for every $\rho\in(0,\rho_*)$
there is a positive, continuous solution $\Phi_\rho$ to \eqref{eq:selfsim2}, satisfying in addition
\begin{align}
	\frac{c_1\rho}{\xi^{2}}e^{-\frac{2}{a}\xi^{-a}} &\leq \Phi_\rho(\xi) \leq \frac{c_2 \rho}{\xi^{2}}e^{-\frac{1}{2a}\xi^{-a}} & &\text{as }\xi\to0, \label{asymp1}\\
	\Phi_\rho(\xi) & = \frac{k_\rho\rho}{\xi^{2+\rho}} + o\Bigr(\frac{1}{\xi^{2+\rho}}\Bigr)& &\text{as }\xi\to\infty, \label{asymp2}
\end{align}
for a constant $k_\rho\to1$ as $\rho\to0$.
\end{theorem}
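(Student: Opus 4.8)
The plan is to construct $\Phi_\rho$ via a fixed-point argument, working in the exponential variables alluded to after \eqref{variables} in the introduction. Writing $\xi = e^{-x}$ (or a similar logarithmic change of variables) and setting $\Phi(\xi) = \xi^{-2}\psi(x)$ (so that the mass density $\xi\Phi(\xi)\de\xi$ becomes $\psi(x)\de x$), equation \eqref{eq:selfsim2} becomes an integral equation for $\psi$ of the form $\psi = \mathcal{T}[\psi]$, where $\mathcal{T}$ is a quadratic operator built from the kernel $K$. The normalization conditions $M(\Phi)=1$ and $M_\gamma(\Phi)=\rho/(1+\rho)$ fix the total integral of $\psi$ and a weighted integral; the key structural fact, coming from \eqref{b} and the formal asymptotics of Section~\ref{sect:asympt}, is that the choice $b = (1+\rho)/\rho\cdot M_\gamma$ is exactly the one compatible with the tail $\Phi(\xi)\sim\xi^{-(2+\rho)}$, i.e.\ $\psi(x)\sim k_\rho\rho\, e^{-\rho|x|}$ as $x\to-\infty$. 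For $\rho$ small, $\psi$ should be close to a simple profile — essentially a smoothed step function that is exponentially small in the region corresponding to $\xi\to 0$ and settles to a slowly decaying tail for $\xi\to\infty$ — and the main work is to show that $\mathcal T$ maps a suitable closed convex set of such profiles into itself and is a contraction (or, alternatively, to apply Schauder's fixed point theorem after establishing compactness).

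First I would set up the functional-analytic framework: choose a Banach space of continuous functions on $\R$ with a weight that encodes both the desired decay $e^{-\rho|x|}$ at $-\infty$ (the fat tail, $\xi\to\infty$) and the much faster Gaussian-type decay $e^{-\frac{c}{a}\xi^{-a}}$ at $+\infty$ (the $\xi\to 0$ behaviour in \eqref{asymp1}), translated into the variable $x$. Then I would analyze the linearized problem near the candidate profile: because $\rho$ is small, the quadratic term carries a small factor in the relevant region, and the operator $\mathcal T$ should be a perturbation of a solvable linear operator whose inverse can be written down or estimated explicitly. The delicate point, emphasized by the authors, is the thin transition layer of width $\sim\rho$ near the origin in the $x$-variable, where $\psi$ jumps from nearly $0$ to order one; one must track this layer carefully, obtaining matching upper and lower bounds there (this is the source of the two different exponents $\frac{2}{a}$ and $\frac{1}{2a}$ in \eqref{asymp1}, reflecting that one only controls the solution up to constants in the exponent). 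I would handle this by an inner/outer decomposition: an outer analysis where $\psi$ is close to its limiting values, and an inner rescaled analysis in the layer, then glue the two with uniform estimates.

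The remaining steps are: (i) verify the normalization — showing that the free parameters (the constant $k_\rho$ and, if needed, a shift or scaling as in Remark~\ref{rm:scaling}) can be tuned so that $M(\Phi_\rho)=1$ and $M_\gamma(\Phi_\rho)=\rho/(1+\rho)$ hold exactly, which is typically done by an intermediate-value / implicit-function argument once the fixed point is known to depend continuously on the parameters; (ii) establish positivity of $\Phi_\rho$, which should follow from the positivity-preserving structure of $\mathcal T$ (the right-hand side of \eqref{eq:selfsim2} is manifestly nonnegative and can be bounded below once a lower bound on $\Phi$ is propagated), together with the explicit lower bound in \eqref{asymp1}; (iii) upgrade the a priori decay bounds into the sharp asymptotics \eqref{asymp2}, i.e.\ prove that $\xi^{2+\rho}\Phi_\rho(\xi)$ actually converges to a constant $k_\rho\rho$ rather than merely being trapped between two constants — this is a separate asymptotic analysis of the tail, using that the integral operator becomes, to leading order as $\xi\to\infty$, a renewal-type convolution whose solutions have a well-defined limit; and (iv) check $k_\rho\to 1$ as $\rho\to 0$ by comparing with the limiting equation.

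The main obstacle I expect is controlling the transition layer: because the solution varies from exponentially small to order one across an interval of length $\sim\rho$, naive estimates lose powers of $\rho$, and one needs a careful choice of weights (and possibly a $\rho$-dependent rescaling inside the layer) so that the contraction constant of $\mathcal T$ stays bounded away from $1$ uniformly as $\rho\to 0$. This is precisely the feature the authors single out as making the analysis harder than in the homogeneity-one case of \cite{BNV}, so I anticipate that a substantial part of the proof is devoted to delicate pointwise bounds on $\psi$ and its increments within this layer, and to showing that the quadratic nonlinearity does not spoil them. Once those layer estimates are in hand, the fixed-point argument, the normalization, the positivity, and the tail asymptotics should all follow by comparatively standard (though still technical) arguments.
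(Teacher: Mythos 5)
Your outline does follow the paper's broad strategy (exponential variables as in \eqref{variables}, a weighted space of continuous functions, the quadratic operator treated as a perturbation of an explicitly solvable linear one, Banach contraction with two free integration constants tuned to meet the two constraints, and a separate analysis for each asymptotic regime), but two of your concrete choices hide genuine gaps. First, you propose to build the sharp $\xi\to0$ behaviour of \eqref{asymp1} (doubly exponential decay in the logarithmic variable) directly into the weight of the fixed-point space. The paper deliberately does not do this: the space $X_{L,\rho}$ in \eqref{space} carries only the crude weight $e^{\frac{m}{\rho}(x+L\rho)}$ to the left of the transition layer, because the correct decay profile is $e^{-\psi_\rho[h](x)}$ with $\psi_\rho[h]$ defined in \eqref{psi} in terms of the unknown solution itself (through $Q[h]$), and it is only pinned down up to the constants $\frac{2}{a}$ versus $\frac{1}{2a}$ in the exponent. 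Fixing such a weight in advance either makes the norm solution-dependent or destroys the uniformity of the contraction estimates in the layer; the paper instead recovers the sharp decay \emph{a posteriori} by the bootstrap of Theorem~\ref{thm:decay-} (iterating Lemma~\ref{lem:decay-}, with $\sigma\mapsto\sigma^2$), which is the device that replaces your inner/outer matching and is not a ``comparatively standard'' afterthought.

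Second, your step (iii) for the tail asymptotics \eqref{asymp2} is missing its key ingredient. Rewriting the equation as $h(x)=\int_x^\infty h(z)\de z+(1+\rho)H(x)$, one must show that $H$ decays strictly faster than $e^{-x}$; the dangerous contribution comes from the near-diagonal region $B_\rho$, where no smallness is available from the kernel or from the pointwise bounds alone, and one needs the cancellation $h(y)-h(x)$. This forces an a priori Lipschitz (difference-quotient) estimate of order $\rho^{-1}e^{-\frac12(x-L\rho)}$, proved by a separate induction argument (Lemma~\ref{lem:diffquot}) before Lemma~\ref{lem:remainderfin} and Theorem~\ref{thm:decay+} can be run. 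Without such a regularity bound your ``renewal-type'' argument only reproduces two-sided bounds of the form $c\,e^{-x}\leq h(x)\leq C\,e^{-x}$ and does not yield convergence of $\xi^{2+\rho}\Phi_\rho(\xi)$, i.e.\ the existence of $k_\rho$. A smaller point in the same spirit: $k_\rho\to1$ is not obtained at fixed framework as $\rho\to0$, but through a simultaneous diagonal limit $L\to\infty$, $\rho\to0$ (with the compatibility condition $\rho e^{(2a+2)L}$ small), as in the final assembly after Theorem~\ref{thm:decay+}; your plan should make this two-parameter structure explicit, since all contraction and remainder estimates degenerate as $L\to\infty$ unless $\rho$ is chosen afterwards.
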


It is worth to remark that the self-similar solutions obtained in Theorem~\ref{thm:exist} have \textit{time-dependent tails}. Indeed, the asymptotics as $x\to\infty$ of the self-similar solution $f_S$ corresponding to the self-similar profile $\Phi_\rho$, according to \eqref{selfsim} and \eqref{selfsimbis}, is
\begin{equation} \label{tails}
f_S(x,t)\sim \frac{A(t)}{x^{2+\rho}}\,, \qquad A(t)= k_\rho\rho ((1-\gamma)t)^\frac{\rho}{1-\gamma}\,.
\end{equation}
In this respect, they differ from the fat tail solutions whose existence has been established so far for kernels with homogeneity $\gamma<1$ in \cite{NTV16,NV13}.

By analogy with the case of homogeneity equal to one, we expect that there is a critical $\rho_{\rm crit}\in (0,\infty]$ such that solutions as in Theorem~\ref{thm:exist} exist for any $\rho \in (0,\rho_{\rm crit})$, and in addition there is a solution bounded exponentially for $\rho =\rho_{\rm crit}$. A conjecture of the precise decay of such a solution is given in \cite{vD87} via self-consistent arguments.

The proof of Theorem~\ref{thm:exist} is carried out in a sequence of steps and results from the combination of Theorem~\ref{thm:fixedpoint}, Theorem~\ref{thm:decay-}, and Theorem~\ref{thm:decay+} below (whose statements are formulated in different variables, see \eqref{variables}). We will explain at the end of Section~\ref{sect:decay+} how to bring together all the intermediate results in order to get the final statement. The strategy leading to Theorem~\ref{thm:exist} is close in spirit to that developed in \cite{BNV} for a similar analysis in the case of homogeneous kernels of degree one. 

The first step (Theorem~\ref{thm:fixedpoint}) consists in establishing the existence of a continuous solution to \eqref{eq:selfsim2}
and mainly rests on an application of Banach Fixed Point Theorem in a space of functions with suitable decay at $\pm\infty$ (in the new variables).
This is carried out in Section~\ref{sect:fixedpoint}, which contains the main argument, and Section~\ref{sect:estimates}, where many technical estimates needed along the proof are collected.

As a second step, we determine in Section~\ref{sect:decay-} the exact decay behaviour of the solution at the origin (Theorem~\ref{thm:decay-}). Notice that \eqref{eq:decay-} is actually a more precise condition than \eqref{asymp1}, but has the disadvantage of being expressed in terms of an implicit function $\psi_\rho$ depending on the solution itself (see in particular Remark~\ref{rm:decay-} for more details).

Finally, in Section~\ref{sect:decay+} we find the exact asymptotics of the solution at infinity (Theorem~\ref{thm:decay+}). This is significantly more involved than the proof of the asymptotics at the origin, and requires in particular an additional estimate on the Lipschitz constant of the solution.


\section{Heuristics} \label{sect:asympt}

We present a formal justification for the relation \eqref{b} between the coefficient $b$ in the equation \eqref{eq:selfsim} and the exponent $\rho$ describing the decay behaviour of the solution at infinity.
Let us make the ansatz that a self-similar profile $\Phi$ exists with $\Phi(\xi)\sim k(\rho)\xi^{-(2+\rho)}$ as $\xi\to\infty$.
We formally compute the asymptotics  as $\xi\to\infty$ of the integral
\begin{equation*}
\int_0^\xi\int_{\xi-\eta}^\infty K(\eta,\zeta)\eta\Phi(\eta)\Phi(\zeta)\de \zeta \de \eta
\end{equation*}
appearing on the right-hand side of \eqref{eq:selfsim}. In order to exploit the assumption \eqref{kernel3} on the kernel, we examine separately the contributions of the regions where $\zeta\leq\e\eta$, $\zeta\geq\frac{1}{\e}\eta$ for a small $\e>0$ (the contribution of the remaining part being negligible, as we will see).

In the case $\zeta\leq\e\eta$, the variable $\eta$ is close to $\xi$ (up to errors converging to zero as $\e\to0$), and by homogeneity of the kernel and by \eqref{kernel0} we have $K(\eta,\zeta)\sim\eta\zeta^{-a}$. Then
\begin{equation*}
\begin{split}
\int_{\frac{\xi}{1+\e}}^\xi \de \eta \int_{\xi-\eta}^{\e\eta} & K(\eta,\zeta) \eta\Phi(\eta)\Phi(\zeta)\de\zeta
\sim k(\rho)\xi^2\xi^{-(2+\rho)} \int_{\frac{\xi}{1+\e}}^\xi \de \eta \int_{\xi-\eta}^{\e\eta} \zeta^{-a}\Phi(\zeta)\de\zeta \\
& = k(\rho)\xi^{-\rho} \int_{0}^{\frac{\e\xi}{1+\e}} \de\zeta \, \zeta^{-a}\Phi(\zeta) \int_{\xi-\zeta}^\xi \de\eta
+ k(\rho)\xi^{-\rho} \int_{\frac{\e\xi}{1+\e}}^{\e \xi} \de\zeta \, \zeta^{-a}\Phi(\zeta) \int_{\frac{\zeta}{\e}}^\xi \de \eta \\
& = k(\rho)\xi^{-\rho} \int_{0}^{\frac{\e\xi}{1+\e}} \zeta^{\gamma}\Phi(\zeta)\de \zeta
+ k(\rho)\xi^{-\rho} \int_{\frac{\e\xi}{1+\e}}^{\e \xi}\zeta^{-a}\Phi(\zeta)\Bigl(\xi-\frac{\zeta}{\e}\Bigr)\de\zeta\,.
\end{split}
\end{equation*}
In the limit as $\xi\to\infty$ the first term converges to $k(\rho)M_\gamma(\Phi)\xi^{-\rho}$, while the second term gives a higher order contribution, since using the power law for $\Phi$ we have
\begin{equation*}
\bigg| \xi^{-\rho} \int_{\frac{\e\xi}{1+\e}}^{\e \xi}\zeta^{-a}\Phi(\zeta)\Bigl(\xi-\frac{\zeta}{\e}\Bigr)\de\zeta\bigg|
\leq C \xi^{-\rho}\xi^{-a}\xi^{-(2+\rho)}\xi^2 \leq C\xi^{-2\rho-a}\,.
\end{equation*}

We next consider the region where $\zeta\geq\frac{1}{\e}\eta$. Here we can use the approximation of $\Phi(\zeta)$ by means of the power law, since $\zeta$ is large; moreover by the properties of the kernel we have $K(\eta,\zeta)\sim\zeta\eta^{-a}$. Then
\begin{equation*}
\begin{split}
\int_{0}^\xi \de \eta & \int_{\max\{\xi-\eta,\frac{\eta}{\e}\}}^{\infty} K(\eta,\zeta) \eta\Phi(\eta)\Phi(\zeta)\de\zeta
\sim k(\rho) \int_{0}^\xi \de \eta \int_{\max\{\xi-\eta,\frac{\eta}{\e}\}}^{\infty} \eta^\gamma\zeta\Phi(\eta) \zeta^{-(2+\rho)}\de\zeta \\
& = k(\rho) \int_{0}^{\frac{\e\xi}{1+\e}} \de \eta \, \eta^\gamma\Phi(\eta) \int_{\xi-\eta}^{\infty} \zeta^{-(1+\rho)}\de\zeta
+ k(\rho) \int_{\frac{\e\xi}{1+\e}}^\xi \de \eta \, \eta^\gamma\Phi(\eta) \int_{\frac{\eta}{\e}}^{\infty} \zeta^{-(1+\rho)}\de\zeta\,.
\end{split}
\end{equation*}
The contribution of the second term is negligible as before. For the first term we have instead
\begin{equation*}
\begin{split}
k(\rho) \int_{0}^{\frac{\e\xi}{1+\e}} \de \eta \, \eta^\gamma\Phi(\eta) \int_{\xi-\eta}^{\infty} \zeta^{-(1+\rho)}\de\zeta
& = \frac{k(\rho)}{\rho} \int_{0}^{\frac{\e\xi}{1+\e}} \eta^\gamma\Phi(\eta)(\xi-\eta)^{-\rho}\de\eta \\
& \sim \frac{k(\rho)}{\rho} \xi^{-\rho} \int_{0}^{\frac{\e\xi}{1+\e}} \eta^\gamma\Phi(\eta)\de\eta
\sim \frac{k(\rho)}{\rho}M_\gamma(\Phi)\xi^{-\rho}\,.
\end{split}
\end{equation*}

Finally it remains to consider the contribution from the region where $\e\eta<\zeta<\frac{1}{\e}\eta$: in this case we can use the power law for $\Phi$ to deduce that this term is also of order $\xi^{-2\rho-a}$, therefore negligible.
By collecting all the contributions, we obtain that the approximation of \eqref{eq:selfsim} for $\xi\to\infty$ is
\begin{equation*}
b k(\rho) \xi^{-\rho} \sim k(\rho)M_\gamma(\Phi)\xi^{-\rho} + \frac{k(\rho)}{\rho}M_\gamma(\Phi)\xi^{-\rho}\,,
\end{equation*}
which provides a formal justification for \eqref{b}.


\section{Existence of self-similar profiles via fixed point} \label{sect:fixedpoint}

In this section we formulate the issue of the existence of a self-similar profile solving equation \eqref{eq:selfsim2} as a fixed point problem, and we show in Theorem~\ref{thm:fixedpoint} how it can be solved by Banach's Contraction Theorem. We postpone to the following section all the technical estimates needed along the proof.

It is convenient to introduce a new coordinate system: we set
\begin{equation} \label{variables}
\rho h(x)= \xi^2 \Phi(\xi)\,,\qquad
\xi = e^{\frac{x}{\rho}}\,.
\end{equation}
In the new variables the equation \eqref{eq:selfsim2} for a self-similar profile $\Phi$ becomes
\begin{equation} \label{equation}
h(x) = \frac{1}{\rho} \int_{-\infty}^x \de y \int_{x+\rho\ln(1-e^{\frac{y-x}{\rho}})}^\infty  e^{-\frac{a}{\rho}z} K(e^{\frac{y-z}{\rho}},1) h(y)h(z) \de z\,,
\end{equation}
with the two constraints
\begin{equation} \label{constraints}
M(h) := \int_{-\infty}^\infty h(x)\de x =1\,,
\qquad
M_\gamma(h) := \frac{1}{\rho} \int_{-\infty}^\infty e^{-\frac{a}{\rho}x}h(x)\de x = \frac{1}{1+\rho}\,.
\end{equation}
The domain of integration in the right-hand side of \eqref{equation} is depicted in Figure~\ref{fig:domains}, left.
It will be sometimes convenient to split the region $\Omega_\rho$ below the $y$-axis into two subregions, due to the different behaviour of the kernel (Figure~\ref{fig:domains}, right):
\begin{align} \label{eq:regions}
\Omega_\rho &:= \bigl\{ (y,z)\in\R^2 \,:\, y< x, \, x+\rho\ln(1-e^{\frac{y-x}{\rho}}) < z < x \bigr\}\,, \nonumber\\
A_\rho &:= \bigl\{ (y,z)\in\Omega_\rho \,:\, x-\rho\ln2< z < x\bigr\}\,,\\
B_\rho &:= \bigl\{ (y,z)\in\Omega_\rho \,:\, z < x-\rho\ln2 \bigr\}\,. \nonumber
\end{align}
For future reference we gather here all the needed properties of the kernel in the regions of integration defined above: firstly, if $z<x-\rho\ln2$, then
\begin{equation} \label{regions2}
e^{\frac{z-x}{\rho}}\in(0,\textstyle\frac12)\,,
\qquad\qquad
x-\rho\ln2 < x+\rho\ln(1-e^{\frac{z-x}{\rho}})<x\,;
\end{equation}
this is in particular the case if $(y,z)\in B_\rho$. In $A_\rho$ and $B_\rho$ the kernel is bounded by assumption \eqref{kernel3} and by \eqref{kernel4} respectively as follows:
\begin{align}
e^{-\frac{a}{\rho}z} K(e^{\frac{y-z}{\rho}},1) \leq (1+2^\delta K_0)e^{-\frac{a}{\rho}y} \qquad\text{for every }(y,z)\in A_\rho, \label{kernelA}\\
K(e^{\frac{y-z}{\rho}},1)\leq (1+K_0)e^{\frac{y-z}{\rho}} \qquad\text{for every }(y,z)\in B_\rho, \label{kernelB}
\end{align}
and moreover, using one more time \eqref{kernel3} and the homogeneity of the kernel, one also has
\begin{equation} \label{regions3}
|K(e^{\frac{y-z}{\rho}},1) -e^{\frac{y-z}{\rho}}| \leq K_0 e^{\frac{1-\delta}{\rho}(y-z)} \qquad\text{for }(y,z)\in B_\rho\,.
\end{equation}

\begin{figure}
	\centering
	\includegraphics{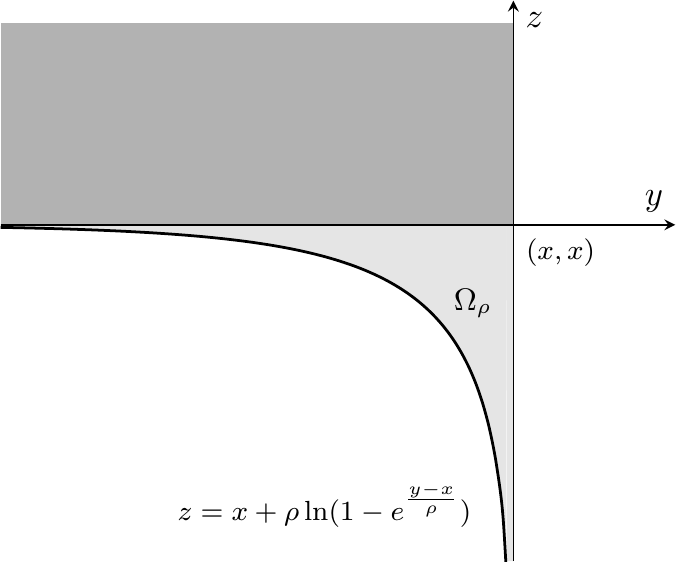}
	\hspace{4ex}
	\includegraphics{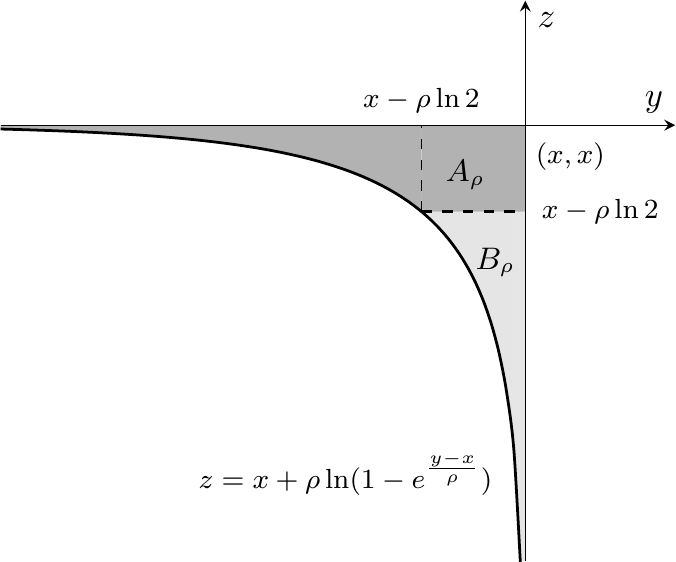}
	\caption{Left: the domain of integration in the right-hand side of equation \eqref{equation}, made up of the two regions above and below the $y$-axis (the second one denoted by $\Omega_\rho$). Right: the two regions $A_\rho$, $B_\rho$ in which we divide the domain $\Omega_\rho$, where the kernel has different decay behaviours; see \eqref{eq:regions}.}
	\label{fig:domains}
\end{figure}

\subsection*{Definition of the parameters}
We now define the various parameters appearing in the proof, pointing out the mutual dependences between them. First recall that $K_0$, $a=1-\gamma$ and $\delta$ are the coefficients appearing in \eqref{kernel3}, which depend only on the properties of the kernel. We fix three constants
\begin{equation} \label{parameters}
m>\max\{a+1,3a \}, \quad L_0>\frac{1}{a}\ln (4m), \quad \rho_0\in(0,1). 
\end{equation}
In addition, we introduce two more free parameters
\begin{equation}\label{parameters2}
L\geq L_0, \quad \rho\in(0,\rho_0),
\end{equation}
which will be chosen at the end of the proof, depending on all the other constants. In particular, $L$ will be chosen sufficiently large, and in turn $\rho$ small, depending also on $L$.
Throughout the paper it will always be assumed that $\rho$ and $L$ satisfy the additional condition
\begin{equation}\label{parameters2bis}
\rho e^{(2a+2)L} < c_0\,,
\end{equation}
where $c_0$ is the constant defined in Lemma~\ref{lem:Q} (depending on all the other parameters).
Finally, we will denote by $C$ a generic constant, possibly changing from line to line, depending only on $K_0$, $a$, $\delta$, $m$, $L_0$ and $\rho_0$, but not on $\rho\in(0,\rho_0)$ and $L>L_0$.

For $L$ and $\rho$ satisfying \eqref{parameters2} and \eqref{parameters2bis}, we introduce the space
\begin{equation}\label{space}
X_{L,\rho} := \Bigl\{ h\in C(\R\setminus\{L\rho\}) \,:\, M(h)=1,\, M_\gamma(h)=\textstyle\frac{1}{1+\rho},\, |h(x)|\leq 2 e^{\omega(x)} \Bigr\}
\end{equation}
with norm
\begin{equation*}\label{norm}
\|h\| := \sup_{x\in\R}\frac{|h(x)|}{e^{\omega(x)}}\,,
\end{equation*}
where $M(h)$ and $M_\gamma(h)$ are the quantities appearing in \eqref{constraints} and the exponent $\omega(x)$ is defined as
\begin{equation}\label{parameters3}
\omega(x):=
\begin{cases}
\frac{m}{\rho}(x+L\rho) & \text{if }x\leq-L\rho,\\
0 & \text{if }-L\rho < x < L\rho,\\
-\frac12(x-L\rho) & \text{if }x\geq L\rho.
\end{cases}
\end{equation}
We will look for a solution to \eqref{equation} in the class $X_{L,\rho}$ as a fixed point of a suitable operator, which we now define.

\subsection*{Approximation of the equation}
We first introduce, for $h\in X_{L,\rho}$, the function
\begin{equation}\label{Q}
Q[h](y) := \inte e^{\frac{a}{\rho}(y-z)}K(\expyz,1)h(z)\de z\,,
\end{equation}
which is uniformly close to 1 in the interval $(-\infty,L\rho)$, provided that $\rho$ is small enough (see Lemma~\ref{lem:Q}).
We also define
\begin{equation} \label{psi}
\psi_\rho[h](x) = \frac{1}{\rho}\int_{x}^{L\rho} \expay Q[h](y)\de y\,, \qquad x < L\rho\,.
\end{equation}

We consider two different approximations of equation \eqref{equation}, valid in the regions $\{x < L\rho\}$ and $\{x \geq L\rho\}$ respectively.
Precisely, we write
\begin{align}
h(x) &= \frac{1}{\rho} \int_{-\infty}^x \expay Q[h](y)h(y)\de y + R[h](x)\,, & x < L\rho, \label{eq:approx1} \\
h(x) &= \frac{1}{1+\rho}\int_x^\infty h(z)\de z + R[h](x)\,, & x \geq L\rho,  \label{eq:approx2}
\end{align}
where the remainder term $R[h]$ has the expression (recall the normalization $M_\gamma(h)=\frac{1}{1+\rho}$)
\begin{align}
R[h](x) &= - \frac{1}{\rho}\int_{-\infty}^x \de y \int_{-\infty}^{x+\rho\ln(1-e^\frac{y-x}{\rho})}\expaz K(\expyz,1)h(y)h(z)\de z\,, & x < L\rho, \label{eq:remainder1} \\
R[h](x) &= -\frac{1}{\rho} \int_{x}^{\infty}\expay h(y)\de y \int_x^\infty h(z)\de z \nonumber\\
&\qquad + \frac{1}{\rho} \int_{-\infty}^x\de y \int_x^\infty \Bigl( \expaz K(\expyz,1)-\expay \Bigr) h(y)h(z)\de z \nonumber\\
&\qquad + \frac{1}{\rho}\int_{-\infty}^{x}\de y \int_{x+\rho\ln(1-e^\frac{y-x}{\rho})}^x \expaz K(\expyz,1)h(y)h(z)\de z\,, & x \geq L\rho. \label{eq:remainder2}
\end{align}
We can solve explicitly the equations \eqref{eq:approx1}--\eqref{eq:approx2} by treating the term $R$ as a remainder.
This leads to the definition of a map $T:X_{L,\rho}\to C(\R\setminus\{L\rho\})$,
\begin{align}\label{T}
T[h](x):=
\begin{cases}
k_1(h) e^{-\psi_\rho[h](x)} + \widetilde{R}[h](x) & \text{for }x < L\rho,\\
\frac{k_2(h)}{1+\rho} e^{-\frac{1}{1+\rho}(x-L\rho)} + \widetilde{R}[h](x) & \text{for }x \geq L\rho,
\end{cases}
\end{align}
where $k_1(h)$ and $k_2(h)$ are integration constants, depending on $h$, that will be fixed later, and $\widetilde{R}[h]$ is defined as
\begin{align}
\widetilde{R}[h](x) &= R[h](x) - \frac{1}{\rho}\int_x^{L\rho}e^{-(\psi_\rho[h](x)-\psi_\rho[h](y))}\expay Q[h](y)R[h](y)\de y \,, & x < L\rho, \label{eq:remainder3} \\
\widetilde{R}[h](x) &= R[h](x) -\frac{1}{1+\rho}\int_{L\rho}^x e^{\frac{1}{1+\rho}(y-x)}R[h](y)\de y \,, & x \geq L\rho. \label{eq:remainder4}
\end{align}
By construction a fixed point $h\in X_{L,\rho}$ of the map $T$ is indeed a solution to \eqref{eq:approx1}--\eqref{eq:approx2} and, in turn, to \eqref{equation}.
The two arbitrary constants $k_1(h)$, $k_2(h)$ will be chosen in Lemma~\ref{lem:K1K2} in order to satisfy the constraints \eqref{constraints}.

The main result of this section is the following.
\begin{theorem}[Fixed point]\label{thm:fixedpoint}
	There exist $\bar{L}\geq L_0$ and a map $\bar{\rho}:(\bar{L},\infty)\to(0,\rho_0)$ with the following property.
	For every $L>\bar{L}$ and for every $\rho\in(0,\bar{\rho}(L))$ the map $T$ has a unique fixed point in $X_{L,\rho}$, that is, there exists a unique $h\in X_{L,\rho}$ such that $T[h]=h$.
\end{theorem}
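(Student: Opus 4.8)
The plan is to deduce Theorem~\ref{thm:fixedpoint} from Banach's Contraction Theorem applied to $T$ on the metric space $(X_{L,\rho},\|\cdot\|)$, choosing first $L$ large and then $\rho$ small in terms of $L$. First one checks that $(X_{L,\rho},\|\cdot\|)$ is a complete metric space: it is a closed subset of the Banach space of functions on $\R$ that are continuous off $\{L\rho\}$ and have finite weighted norm $\|\cdot\|$, the constraints $M(h)=1$ and $M_\gamma(h)=\tfrac1{1+\rho}$ being $\|\cdot\|$-continuous because $\int_\R e^{\omega(x)}\,\de x<\infty$ and $\tfrac1\rho\int_\R e^{-\frac a\rho x}e^{\omega(x)}\,\de x<\infty$ (at $+\infty$ since $\omega$ decays exponentially, at $-\infty$ since $m>a$ by \eqref{parameters}). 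It then remains to prove, for $L\geq\bar L$ and $\rho<\bar\rho(L)$: (a) $T(X_{L,\rho})\subseteq X_{L,\rho}$, and (b) $T$ is a contraction on $X_{L,\rho}$. Throughout, I would invoke the technical estimates of Section~\ref{sect:estimates} — in particular Lemma~\ref{lem:Q} ($Q[h]$ close to $1$ on $(-\infty,L\rho)$), Lemma~\ref{lem:K1K2} (choice and size of $k_1(h),k_2(h)$), and the size and Lipschitz bounds on $\psi_\rho[h]$, $R[h]$, $\widetilde{R}[h]$ — as black boxes.

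For the self-map property (a): $T[h]$ is continuous on $\R\setminus\{L\rho\}$ by construction, and by Lemma~\ref{lem:K1K2} the constants $k_1(h),k_2(h)$ — uniformly bounded, with $k_1(h)$ close to $1$ — are chosen precisely so that $T[h]$ satisfies the two constraints in \eqref{constraints}. The substantial point is the pointwise bound $|T[h](x)|\leq2e^{\omega(x)}$, which I would verify along the three pieces of $\omega$. On $-L\rho\leq x<L\rho$, where $\omega=0$: since $\psi_\rho[h]\geq0$ one has $e^{-\psi_\rho[h]}\leq1$, so the leading term is bounded by $|k_1(h)|<\tfrac32$, while $\|\widetilde{R}[h]\|<\tfrac12$ by the remainder estimates, giving $<2$. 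On $x<-L\rho$: the decisive fact is that $\psi_\rho[h](x)$ grows \emph{doubly} exponentially as $x\to-\infty$ (of order $e^{-\frac a\rho x}$, since the integrand in \eqref{psi} is close to $e^{-\frac a\rho y}$), so that $k_1(h)e^{-\psi_\rho[h](x)}\leq e^{\omega(x)}$ provided $L$ is large — the binding case being near $x=-L\rho$, where one needs $\psi_\rho[h](-L\rho)\gtrsim e^{aL}$ to beat $-\omega(x)+\ln2$, which is exactly where "$L$ large" is used — while $|\widetilde{R}[h](x)|\leq e^{\omega(x)}$ follows from $\|\widetilde{R}[h]\|\leq1$. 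On $x\geq L\rho$, where $\omega(x)=-\tfrac12(x-L\rho)$: since $\rho<1$ we have $\tfrac1{1+\rho}>\tfrac12$, hence $e^{-\frac1{1+\rho}(x-L\rho)}\leq e^{\omega(x)}$, and combining $\tfrac{k_2(h)}{1+\rho}<\tfrac32$ with $|\widetilde{R}[h](x)|<\tfrac12 e^{\omega(x)}$ closes the bound.

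For the contraction (b): one splits $T[h]-T[\widetilde h]$ term by term. The leading terms contribute $(k_i(h)-k_i(\widetilde h))\cdot(\text{exponential factor for }h)+k_i(\widetilde h)\cdot(\text{difference of exponential factors})$; Lemma~\ref{lem:K1K2} gives $|k_i(h)-k_i(\widetilde h)|\leq C\rho^\sigma e^{CL}\|h-\widetilde h\|$ for some $\sigma>0$, and a mean value estimate together with the bound on $\psi_\rho[h]-\psi_\rho[\widetilde h]$ yields
\[
\bigl|e^{-\psi_\rho[h](x)}-e^{-\psi_\rho[\widetilde h](x)}\bigr|
\leq Ce^{-\frac a\rho x}\max\bigl(e^{-\psi_\rho[h](x)},e^{-\psi_\rho[\widetilde h](x)}\bigr)\|h-\widetilde h\|
\leq Ce^{\omega(x)}\|h-\widetilde h\|,
\]
where again the double-exponential decay of $e^{-\psi_\rho[h]}$ absorbs the factor $e^{-\frac a\rho x}$; finally, since $R[\cdot]$ is quadratic and $X_{L,\rho}$ is bounded, $\|\widetilde{R}[h]-\widetilde{R}[\widetilde h]\|\leq C\rho e^{CL}\|h-\widetilde h\|$. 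Altogether $\|T[h]-T[\widetilde h]\|\leq C(\rho^\sigma+\rho)e^{CL}\|h-\widetilde h\|$. One then fixes $\bar L\geq L_0$ (depending only on $K_0,a,\delta,m,L_0,\rho_0$) large enough that every "$L$-large" threshold of (a) is met, and for each $L>\bar L$ picks $\bar\rho(L)\in(0,\rho_0)$ so small that \eqref{parameters2bis} holds, that $C(\rho^\sigma+\rho)e^{CL}<\tfrac12$, and that the remainder bounds used in (a) are $<\tfrac12$; Banach's theorem then gives the unique fixed point in $X_{L,\rho}$.

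The step I expect to be the main obstacle is (a): the weight $\omega$ is prescribed, with three specific decay rates, and one must show that the genuine decay of $T[h]$ beats it in each region. At $-\infty$ this hinges on $e^{-\psi_\rho[h]}$ decaying faster than any exponential, quantified uniformly in $h\in X_{L,\rho}$ — which is what forces $L$ to be chosen large — and near the gluing point $L\rho$ on the fine balance between $k_1(h),k_2(h)$ and the strict inequality $\tfrac1{1+\rho}>\tfrac12$. Once these weighted bounds are secured, the contraction (b) is comparatively routine, because $R[\cdot]$ is quadratic and every potentially large factor carries either a positive power of $\rho$ or the small quantity $\rho e^{CL}$ that \eqref{parameters2bis} keeps under control.
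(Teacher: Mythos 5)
Your proposal is correct and follows essentially the same route as the paper: Banach's fixed point theorem on $X_{L,\rho}$, with $k_1(h),k_2(h)$ fixed by Lemma~\ref{lem:K1K2}, the weighted self-map bound obtained from the super-exponential growth of $\psi_\rho[h]$ (the paper packages this as the monotonicity statement \eqref{lem:psi2}, which hinges on $e^{aL}\geq 4m$, i.e.\ on the choice of $L_0$ in \eqref{parameters}) together with the remainder estimates \eqref{eq:est-rem3-4}, and the contraction from the Lipschitz bounds \eqref{eq:K1K2bis}, \eqref{lem:psi3} and \eqref{eq:est-rem3-4bis}, choosing $L$ large and then $\rho$ small. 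The only slight inaccuracy is in the bookkeeping of the contraction constant, which is not $C(\rho^\sigma+\rho)e^{CL}$ but rather of the form $\tfrac12+Ce^{-aL}+C\rho e^{CL}$ (the $e^{-aL}$ pieces coming from the region $x\geq L\rho$ and from the $k_i$); this is harmless since your scheme takes $L$ large before $\rho$ small anyway, and your explicit completeness check is a point the paper leaves implicit.
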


\begin{proof}
In view of Lemma~\ref{lem:K1K2}, we can select the two arbitrary constants $k_1(h)$, $k_2(h)$ so that the constraints \eqref{eq:K1K2} are satisfied, for all $L> L_1$ and $\rho\in(0,\rho_1(L))$. In order to show that $T$ maps $X_{L,\rho}$ into itself, it only remains to show the bound
\begin{equation} \label{eq:prooffp}
|T[h](x)| \leq 2e^{\omega(x)}.
\end{equation}
This is a direct consequence of Lemma~\ref{lem:psi}, Lemma~\ref{lem:remainder3-4} and of the first condition in \eqref{eq:K1K2bis}. Indeed, for $x\leq -L\rho$ we have
\begin{align*}
|T[h](x)| \leq \frac32 e^{-\psi_\rho[h](x)} + C\rho e^{(m+a+2)L}e^{\frac{m}{\rho}(x+L\rho)}
\xupref{lem:psi2}{\leq} \Bigl(\frac{3}{2} + C\rho e^{(m+a+2)L} \Bigr) e^{\omega(x)}\,.
\end{align*}
Similarly, using again Lemma~\ref{lem:remainder3-4} and \eqref{eq:K1K2bis}, we have for $x\in(-L\rho,L\rho)$
\begin{align*}
|T[h](x)| \leq \frac32 + C\rho e^{(a+2)L}\,,
\end{align*}
and for $x \geq L\rho$
\begin{align*}
|T[h](x)| \leq \frac{3}{2}e^{-\frac{1}{1+\rho}(x-L\rho)} + C\bigl( e^{-aL}+\rho e^{aL} \bigr)e^{-\frac{1}{2}(x-L\rho)} \leq \Bigl( \frac32 + Ce^{-aL}+C\rho e^{aL} \Bigr) e^{\omega(x)}\,.
\end{align*}
It follows from the previous estimates that the condition \eqref{eq:prooffp} is satisfied, provided we choose $L$ large enough and, in turn, $\rho$ sufficiently small, depending on $L$. More precisely, there exists $L_2\geq L_1$ and a map $\rho_2:(L_2,\infty)\to(0,\rho_0)$ such that for every $L> L_2$ and $\rho\in(0,\rho_2(L))$ one has $T(X_{L,\rho})\subset X_{L,\rho}$.

We finally show that the map $T$ is a contraction in $X_{L,\rho}$:
\begin{equation}\label{eq:prooffp2}
\|T[h_1]-T[h_2]\| \leq \theta \|h_1-h_2\|
\end{equation}
for every $h_1,h_2\in X_{L,\rho}$, for some $\theta<1$.
Arguing as before, we use \eqref{eq:K1K2bis}, \eqref{lem:psi3} (with $y=L\rho$), and \eqref{eq:est-rem3-4bis}, to obtain for $x\leq -L\rho$
\begin{align*}
|T[&h_1](x)-T[h_2](x)| \\
&\leq |k_1(h_1)-k_1(h_2)|e^{-\psi_\rho[h_1](x)} + k_1(h_2)|e^{-\psi_\rho[h_1](x)}-e^{-\psi_\rho[h_2](x)}| + |\widetilde{R}[h_1](x)-\widetilde{R}[h_2](x)| \\
&\leq \Bigl(\frac12 + C\rho e^{(2a+2)L} + C\rho e^{(m+4a+4)L}\Bigr)\|h_1-h_2\| e^{\frac{m}{\rho}(x+L\rho)}\,.
\end{align*}
Similarly we have for $x\in(-L\rho,L\rho)$
\begin{align*}
|T[&h_1](x)-T[h_2](x)| \\
&\leq |k_1(h_1)-k_1(h_2)|e^{-\psi_\rho[h_1](x)} + k_1(h_2)|e^{-\psi_\rho[h_1](x)}-e^{-\psi_\rho[h_2](x)}| + |\widetilde{R}[h_1](x)-\widetilde{R}[h_2](x)| \\
&\leq \Bigl(\frac12 + C\rho e^{(3a+2)L} + C\rho e^{(5a+4)L}\Bigr)\|h_1-h_2\|\,,
\end{align*}
and for $x\geq L\rho$
\begin{align*}
|T[h_1](x)-T[h_2](x)|
&\leq |k_2(h_1)-k_2(h_2)|e^{-\frac{1}{1+\rho}(x-L\rho)} + |\widetilde{R}[h_1](x)-\widetilde{R}[h_2](x)| \\
&\leq \Bigl(\frac12 + Ce^{-aL} +C\rho e^{aL} \Bigr)\|h_1-h_2\| e^{-\frac12(x-L\rho)}\,.
\end{align*}
Hence, by possibly taking a larger $L$ and, in turn, a smaller $\rho$, we obtain that the condition \eqref{eq:prooffp2} is satisfied.
The conclusion of the theorem follows now from Banach Fixed Point Theorem.
\end{proof}

\begin{remark}[Continuity] \label{rm:continuity}
Notice that, even if the function $T[h]$ defined in \eqref{T} might be in principle discontinuous at the point $L\rho$, the fixed point constructed in Theorem~\ref{thm:fixedpoint} is automatically continuous on $\R$: indeed, the right-hand side of \eqref{equation} is continuous by Lebesgue's Dominated Convergence Theorem.
\end{remark}

\begin{remark}[Uniqueness] \label{rm:uniq}
A consequence of Theorem~\ref{thm:fixedpoint} is the following weak form of uniqueness of the solution to \eqref{equation}: for a given $L>\bar{L}$ and for $\rho<\bar{\rho}(L)$, the fixed point $h\in X_{L,\rho}$ coincides with the fixed point in $X_{L',\rho}$ for every $L'<L$. Indeed, from definition \eqref{space} of the space it is clear that $h\in X_{L',\rho}$, so that the claim follows by the uniqueness of the fixed point in $X_{L',\rho}$.
The auxiliary parameter $L$ has been introduced to take into account the transition layer in which the solution has a strong variation.
\end{remark}


\section{Technical estimates} \label{sect:estimates}

We collect in this section the technical estimates needed in the proof of the existence of a fixed point in Theorem~\ref{thm:fixedpoint}.
Recall that $C$ always denotes a generic constant, depending only on the kernel $K$ and on $m$, but not on $L$ and $\rho$, which may change from line to line.
We start with a lemma showing uniform integral estimates satisfied by any function $h\in X_{L,\rho}$.

\begin{lemma}\label{lem:prelim}
	For every $h\in X_{L,\rho}$ one has
	\begin{equation}\label{eq:prelim1}
	\frac{1}{\rho}\inte \expay|h(y)|\de y \leq C e^{aL}\,,
	\end{equation}
	\begin{equation}\label{eq:prelim2}
	\frac{1}{\rho}\inte e^{\frac{\delta-a}{\rho}y}|h(y)|\de y \leq Ce^{aL}\,,
	\end{equation}
	where $\delta$ is the parameter appearing in \eqref{kernel3}.
\end{lemma}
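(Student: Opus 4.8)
The plan is to estimate both integrals directly from the pointwise bound $|h(x)|\leq 2e^{\omega(x)}$ built into the definition \eqref{space} of $X_{L,\rho}$; the moment constraints \eqref{constraints} play no role here, since we must control $|h|$ rather than $h$ itself. Accordingly I would split the real line into the three intervals $(-\infty,-L\rho]$, $(-L\rho,L\rho)$ and $[L\rho,\infty)$ on which $\omega$ is given by the three branches of \eqref{parameters3}, and bound the contribution of each interval separately, so that each reduces to an elementary exponential integral.

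For \eqref{eq:prelim1}: on $(-\infty,-L\rho]$ one has $|h(y)|\leq 2e^{\frac{m}{\rho}(y+L\rho)}$, so the integrand is bounded by $\frac{2}{\rho}e^{mL}e^{\frac{m-a}{\rho}y}$; since $m>a+1>a$ by \eqref{parameters}, the exponent $m-a$ is positive, the integral over $(-\infty,-L\rho]$ converges, and its value is $\frac{2}{m-a}e^{aL}$. On $(-L\rho,L\rho)$ we have $|h(y)|\leq 2$, and $\frac{1}{\rho}\int_{-L\rho}^{L\rho}2e^{-\frac{a}{\rho}y}\de y=\frac{2}{a}(e^{aL}-e^{-aL})\leq\frac{2}{a}e^{aL}$. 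On $[L\rho,\infty)$ we use $|h(y)|\leq 2e^{-\frac12(y-L\rho)}$ and the substitution $u=y-L\rho$ to get a contribution $\frac{2}{a+\rho/2}e^{-aL}\leq\frac{2}{a}e^{aL}$. Summing the three pieces yields \eqref{eq:prelim1} with an explicit constant depending only on $a$ and $m$, hence admissible as one of our generic constants $C$.

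For \eqref{eq:prelim2} the argument is identical after replacing the weight $e^{-\frac{a}{\rho}y}$ by $e^{\frac{\delta-a}{\rho}y}=e^{-\frac{\beta}{\rho}y}$, where $\beta:=a-\delta\in(0,a)$ by the standing assumption $\delta<a$ fixed below \eqref{kernel3}. The only point to recheck is that the first interval still produces a convergent integral: the relevant exponent there becomes $m-\beta$, which is positive since $m>a>\beta$, and the resulting bound is $\frac{2}{m-\beta}e^{\beta L}\leq Ce^{aL}$. The middle and right intervals are treated exactly as before with $\beta$ in place of $a$, giving bounds $\frac{2}{\beta}e^{\beta L}$ and $\frac{2}{\beta}e^{-\beta L}$ respectively, both $\leq Ce^{aL}$.

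There is no genuine obstacle in this lemma; the only care needed is the bookkeeping of exponents, to verify that each of the three pieces is indeed $O(e^{aL})$ as $L\to\infty$ rather than something larger. This relies precisely on the two structural facts recorded earlier: the choice $m>a+1$ in \eqref{parameters}, which makes the weight integrable at $-\infty$ against the steep growth of $\omega$, and the normalization $\delta<a$, which guarantees $\beta=a-\delta>0$ (so that the second weight is still decaying at the same rate, up to a smaller exponent, in the central region).
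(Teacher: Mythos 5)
Your argument is correct and is exactly the paper's proof: insert the pointwise bound $|h(y)|\leq 2e^{\omega(y)}$ from \eqref{space}--\eqref{parameters3}, split at $\pm L\rho$, and evaluate the resulting elementary exponential integrals, using $m>a$ (resp.\ $m>a-\delta$) for convergence at $-\infty$ and $\delta<a$ so that each piece is $O(e^{aL})$ with constants depending only on $a,\delta,m$. No gaps; your version merely spells out the "elementary computations" the paper leaves implicit.
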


\begin{proof}
	The estimates in the statement follow by inserting the bound $|h(x)|\leq 2e^{\omega(x)}$ in the integrals, see \eqref{parameters3}, and by elementary computations.
\end{proof}

In the next three lemmas we collect some properties of the functions $Q[h]$ and $\psi_\rho[h]$, defined in \eqref{Q} and \eqref{psi} respectively, which will be instrumental in the following.

\begin{lemma}\label{lem:Q}
	Let $Q[h]$ be the function defined in \eqref{Q}, for $h\in X_{L,\rho}$.
	There exists a constant $c_0>0$, depending only on $K_0$, $a$, $\delta$, and $m$, such that if $\rho e^{(2a+2)L}<c_0$ then
	\begin{equation}\label{lem:Q2}
	|Q[h](y)-1| \leq \frac12 \qquad\text{for }y\in(-\infty,L\rho)\,,
	\end{equation}
	for every $h\in X_{L,\rho}$.
\end{lemma}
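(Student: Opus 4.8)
The plan is to estimate the quantity $Q[h](y)-1$ by rewriting the defining integral \eqref{Q} in such a way that the normalization constraint $M_\gamma(h)=\frac{1}{1+\rho}$ produces the leading term $1$, and then to control the resulting error terms by splitting the integration variable $z$ according to the regions in which the kernel has different behaviour, in the spirit of the decomposition \eqref{eq:regions}. Concretely, I would first change variables in \eqref{Q} so that the integral is compared with $\frac{1}{\rho}\int_{-\infty}^\infty e^{-\frac{a}{\rho}z}h(z)\de z = \frac{1}{1+\rho}$, which is already within $C\rho$ of $1$; the whole game is then to show that replacing this clean integral by $Q[h](y)=\int_{-\infty}^\infty e^{\frac{a}{\rho}(y-z)}K(e^{\frac{y-z}{\rho}},1)h(z)\de z$ costs only a small error when $\rho e^{(2a+2)L}$ is small.

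The key steps, in order, are as follows. First, split the $z$-integral in \eqref{Q} at $z = y-\rho\ln 2$ (so that $e^{\frac{y-z}{\rho}}$ is either below or above $\tfrac12$), analogously to the split between $A_\rho$ and $B_\rho$. On the region $z > y - \rho\ln 2$ (i.e. the argument of $K$ is small, close to the regime of \eqref{kernel3}) I would use \eqref{kernel3} in the form $|e^{\frac{a}{\rho}(y-z)}K(e^{\frac{y-z}{\rho}},1) - 1|\leq K_0 e^{\frac{\delta}{\rho}(y-z)}$ to write the contribution as $\frac{1}{\rho}\int e^{-\frac{a}{\rho}z}h(z)\,\de z$ plus an error controlled by $\frac{K_0}{\rho}\int e^{\frac{\delta-a}{\rho}(y-z)}|h(z)|\de z$, which by Lemma~\ref{lem:prelim} — specifically \eqref{eq:prelim2}, after absorbing the factor $e^{\frac{\delta-a}{\rho}y}$ and using $y<L\rho$ — is bounded by $C e^{(a+\delta)L}\rho \cdot (\text{something})$; one must be a little careful, but the upshot is a bound of the form $C\rho e^{cL}$ for a suitable constant $c\le 2a+2$. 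On the complementary region $z < y-\rho\ln 2$ (argument of $K$ of order one or large, so \eqref{kernel4} applies), one bounds $K(e^{\frac{y-z}{\rho}},1)\leq (K_0+1)e^{\frac{y-z}{\rho}}$, giving a contribution controlled by $\frac{C}{\rho}\int e^{\frac{(1-a)}{\rho}(y-z)}e^{-\frac{a}{\rho}z}\cdots$ — here I would use the bound $|h|\le 2e^\omega$ directly together with $y<L\rho$ and carry out the elementary exponential integrals, the tail at $z\to-\infty$ being where the growth exponent $m$ of $\omega$ enters and produces a factor like $e^{(2a+2)L}$. Finally, one also has to account for the discrepancy between the clean integral $\frac{1}{\rho}\int e^{-\frac{a}{\rho}z}h(z)\,\de z=\frac{1}{1+\rho}$ and the target value $1$, which is $\frac{\rho}{1+\rho}\le\rho$. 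Collecting all pieces yields $|Q[h](y)-1|\le C\rho e^{(2a+2)L}$ uniformly for $y<L\rho$ and $h\in X_{L,\rho}$, so choosing $c_0 = \frac{1}{2C}$ makes the right-hand side $\le\frac12$ as soon as $\rho e^{(2a+2)L}<c_0$, which is exactly the claim \eqref{lem:Q2}.

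The main obstacle I anticipate is the bookkeeping of the exponential factors: one must track precisely which power of $e^L$ appears in each of the error contributions (the region near the origin contributes powers coming from $\delta,a$; the far region contributes powers coming from the growth rate $m$ of $\omega$ at $-\infty$ and from the $e^{\frac{1-a}{\rho}(y-z)}$ factor in \eqref{kernel4}), and check that none of them exceeds $e^{(2a+2)L}$ — this is presumably why the exponent $2a+2$ was chosen in \eqref{parameters2bis} and why the constraint $m>\max\{a+1,3a\}$ is imposed in \eqref{parameters}. A secondary technical point is ensuring that all the integrals converge and that Fubini-type manipulations used in rewriting \eqref{Q} are legitimate, but this is routine given the a priori pointwise bound $|h|\le 2e^\omega$ with $\omega$ linear at $-\infty$ with positive slope and linear at $+\infty$ with negative slope. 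I would present the computation by isolating the near-origin error and the far-field error as two displayed estimates, each reduced to Lemma~\ref{lem:prelim} and elementary exponential integrals, and then combine.
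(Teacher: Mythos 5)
Your overall architecture (split the $z$-integral according to whether $e^{\frac{y-z}{\rho}}$ is small or of order one, use \eqref{kernel3} in the first regime and \eqref{kernel4} in the second, insert $|h|\le 2e^{\omega}$ and compute elementary exponential integrals) is indeed the paper's, but the central step --- where the leading constant $1$ comes from --- is wrong. You attribute it to the $\gamma$-moment normalization $M_\gamma(h)=\frac{1}{1+\rho}$; it must instead come from the mass constraint $M(h)=1$ in \eqref{constraints}, and this is exactly what the paper uses. The reason is structural: by \eqref{kernel3} the prefactor $e^{\frac{a}{\rho}(y-z)}K(e^{\frac{y-z}{\rho}},1)$ is close to the \emph{constant} $1$ when $z-y$ is large, so $Q[h](y)$ is a perturbation of $\int_{-\infty}^{\infty}h(z)\de z=M(h)$, not of $\frac{1}{\rho}\int_{-\infty}^{\infty}e^{-\frac{a}{\rho}z}h(z)\de z$. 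A term-by-term comparison with the latter cannot be small: on the bulk interval $z\in(-L\rho,L\rho)$, where $h$ carries mass of order one, the kernel prefactor is of order $1$ while $\frac{1}{\rho}e^{-\frac{a}{\rho}z}$ is of order $\rho^{-1}e^{\pm aL}$, so the discrepancy in your decomposition is of order $e^{aL}$ at best, and no cancellation is available since only the pointwise bound $|h|\le 2e^{\omega}$ is known. The constraint $M_\gamma(h)=\frac{1}{1+\rho}$ is simply not needed in this lemma.

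Relatedly, the inequality you extract from \eqref{kernel3} is garbled, and your error term is not even finite. What \eqref{kernel3} gives (with $x=e^{\frac{y-z}{\rho}}<2$) is $|e^{\frac{a}{\rho}(y-z)}K(e^{\frac{y-z}{\rho}},1)-1|\le K_0e^{\frac{\delta}{\rho}(y-z)}$: leading term $1$, error exponent $\delta$, no factor $\frac{1}{\rho}$. Your version, with main term $\frac{1}{\rho}e^{-\frac{a}{\rho}z}$ and error $\frac{K_0}{\rho}e^{\frac{\delta-a}{\rho}(y-z)}$, does not follow from \eqref{kernel3}; moreover, since $\delta<a$ the factor $e^{\frac{\delta-a}{\rho}(y-z)}=e^{\frac{a-\delta}{\rho}(z-y)}$ grows in $z$ at rate $\frac{a-\delta}{\rho}$, which overwhelms the decay $|h(z)|\le 2e^{-\frac12(z-L\rho)}$, so the proposed error integral over $\{z>y-\rho\ln 2\}$ diverges (and the ``absorption'' of $e^{\frac{\delta-a}{\rho}y}$ via $y<L\rho$ also fails for $y$ very negative, where that factor blows up). The correct route is the paper's: write $Q[h](y)-1=\int_{-\infty}^{\infty}\bigl(e^{\frac{a}{\rho}(y-z)}K(e^{\frac{y-z}{\rho}},1)-1\bigr)h(z)\de z$ using $M(h)=1$, bound the part $z>y$ by $K_0\int_y^\infty e^{\frac{\delta}{\rho}(y-z)}|h(z)|\de z\le C\rho$, and bound the part $z<y$ by $C\int_{-\infty}^y e^{\frac{a+1}{\rho}(y-z)}e^{\omega(z)}\de z+\int_{-\infty}^y e^{\omega(z)}\de z\le C\rho\bigl(1+L+e^{(2a+2)L}\bigr)$, which is precisely where the smallness hypothesis $\rho e^{(2a+2)L}<c_0$ enters.
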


\begin{proof}
The lemma is a consequence of the constraint $M(h)=1$, see \eqref{constraints}. We consider first the case $y\in(-\infty,-L\rho)$: using \eqref{kernel3}--\eqref{kernel4}
\begin{align} \label{lem:Q2bis}
|Q[h](y)-1| &= \bigg| \inte e^{\frac{a}{\rho}(y-z)}K(\expyz,1)h(z)\de z - \inte h(z)\de z\bigg| \nonumber\\
& \leq C\int_{-\infty}^y e^{\frac{a+1}{\rho}(y-z)}|h(z)|\de z + C \int_{y}^\infty e^{\frac{\delta}{\rho}(y-z)}|h(z)|\de z + \int_{-\infty}^y |h(z)|\de z \\
& \leq C\int_{-\infty}^y e^{\frac{a+1}{\rho}(y-z)}e^{\frac{m}{\rho}(z+L\rho)}\de z + C \int_{y}^\infty e^{\frac{\delta}{\rho}(y-z)}\de z + C \int_{-\infty}^y e^{\frac{m}{\rho}(z+L\rho)}\de z \leq C\rho\,.\nonumber
\end{align}
We next consider the case $y\in(-L\rho,L\rho)$: arguing as before, we have
\begin{align} \label{lem:Q2ter}
|Q[h](y)-1|
& \leq C\int_{-\infty}^y e^{\frac{a+1}{\rho}(y-z)}e^{\omega(z)}\de z + C \int_{y}^\infty e^{\frac{\delta}{\rho}(y-z)}\de z + C \int_{-\infty}^y e^{\omega(z)}\de z \nonumber\\
& \leq C\rho \bigl(1 + L + e^{(2a+2)L} \bigr)\,.
\end{align}
Since the constant $C$ in \eqref{lem:Q2bis}--\eqref{lem:Q2ter} depends only on $K_0$, $a$, $\delta$, and $m$, it is clear that \eqref{lem:Q2} holds provided that the quantity $\rho e^{(2a+2)L}$ is small enough.
\end{proof}

\begin{remark}
Notice that throughout the paper we always assume that the parameters $L$ and $\rho$ fulfill condition \eqref{parameters2bis}.
Hence the bound \eqref{lem:Q2} is satisfied and, in particular, $Q[h]$ is a positive function.
It also follows from \eqref{lem:Q2bis}--\eqref{lem:Q2ter} that the function $Q[h]$ is uniformly close to the constant 1, provided $\rho$ is small enough.
\end{remark}

\begin{lemma}\label{lem:Q3}
For every $h_1,h_2\in X_{L,\rho}$ and $y<L\rho$ one has
	\begin{equation}\label{lem:Q4}
	\big|Q[h_1](y)-Q[h_2](y)\big|\leq C\rho e^{(2a+2)L}\|h_1-h_2\|\,.
	\end{equation}
\end{lemma}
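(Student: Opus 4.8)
The plan is to estimate the difference $Q[h_1](y)-Q[h_2](y)$ directly from the definition \eqref{Q}, using that the only $h$-dependence is linear, so that
\[
Q[h_1](y)-Q[h_2](y) = \inte e^{\frac{a}{\rho}(y-z)}K(e^{\frac{y-z}{\rho}},1)\bigl(h_1(z)-h_2(z)\bigr)\de z\,.
\]
This is the same integral analysed in the proof of Lemma~\ref{lem:Q}, but now with the factor $h_1-h_2$ in place of $h(z)-1$; in particular the constraint $M(h)=1$ plays no role here, so no cancellation is needed. I would bound $|h_1(z)-h_2(z)| \leq \|h_1-h_2\| e^{\omega(z)}$ and split the $z$-integral at $z=y$ to separate the two different regimes of the kernel.

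For $z<y$ one has $e^{\frac{y-z}{\rho}}>1$, so by \eqref{kernel4} the integrand is controlled by $C e^{\frac{a+1}{\rho}(y-z)} e^{\omega(z)}$; for $z>y$ one has $e^{\frac{y-z}{\rho}}\in(0,1)$, and \eqref{kernel3} gives $K(e^{\frac{y-z}{\rho}},1)\leq (1+K_0)e^{-\frac{a}{\rho}(y-z)} = (1+K_0)e^{\frac{a}{\rho}(z-y)}$, hence the integrand is bounded by $C e^{\omega(z)}$ (actually by $C e^{\frac{\delta}{\rho}(y-z)}e^{\omega(z)}$, using the sharper form, but the crude bound suffices). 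One is then left with two integrals of the form $\int e^{(\text{exponential})}e^{\omega(z)}\de z$ over $\R$, and inserting the explicit piecewise-linear $\omega$ from \eqref{parameters3} and performing the elementary computation — exactly as at the end of the proof of Lemma~\ref{lem:Q}, cf.\ \eqref{lem:Q2bis}--\eqref{lem:Q2ter} — produces the factor $\rho(1+L+e^{(2a+2)L})\leq C\rho e^{(2a+2)L}$. This yields \eqref{lem:Q4}.

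There is no real obstacle: the estimate is a routine variant of Lemma~\ref{lem:Q}, and the only point requiring a little care is that the worst contribution comes from the region $z\leq -L\rho$, where $\omega(z)=\frac{m}{\rho}(z+L\rho)$ grows steeply as $z\to -L\rho^-$; there the competition between $e^{\frac{a+1}{\rho}(y-z)}$ and $e^{\frac{m}{\rho}(z+L\rho)}$ must be resolved using $m>a+1$ (guaranteed by \eqref{parameters}), which makes the $z$-integral convergent and produces, after integrating, a boundary term of order $\rho\, e^{\frac{m}{\rho}(y+L\rho)}$ when $y\in(-L\rho,L\rho)$; bounding $e^{\frac{m}{\rho}(y+L\rho)}$ by $e^{2mL}$ and absorbing all such terms into $C\rho e^{(2a+2)L}$ (after possibly enlarging the generic constant $C$, which is allowed to depend on $m$) completes the argument. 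Since the bound must hold uniformly for $y<L\rho$, one just checks that the case $y\in(-\infty,-L\rho)$ gives an even smaller contribution, so the supremum over $y<L\rho$ is attained (up to constants) in the middle region, giving the stated estimate.
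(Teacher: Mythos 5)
Your reduction to the linearity of $Q$ in $h$ is fine, but the claim that ``the constraint $M(h)=1$ plays no role here, so no cancellation is needed'' is exactly where the argument breaks down. Write $W(y,z):=e^{\frac{a}{\rho}(y-z)}K(e^{\frac{y-z}{\rho}},1)$. For $z>y$, assumption \eqref{kernel3} gives $|W(y,z)-1|\leq K_0e^{\frac{\delta}{\rho}(y-z)}$; in particular $W(y,z)\to1$ as $z-y\to\infty$, so the bound $Ce^{\frac{\delta}{\rho}(y-z)}$ that you invoke as the ``sharper form'' holds for $W-1$, not for $W$ itself. Consequently, with your crude bound the contribution of the region $z>y$ is $\int_y^\infty(1+K_0)\|h_1-h_2\|e^{\omega(z)}\de z$, which for $y\in(-L\rho,L\rho)$ is at least of order $\|h_1-h_2\|$ (indeed $\int_{L\rho}^\infty e^{\omega(z)}\de z=2$), and this cannot be dominated by $C\rho e^{(2a+2)L}\|h_1-h_2\|$ with $C$ independent of $\rho$ and $L$, since the right-hand side tends to $0$ as $\rho\to0$ with $L$ fixed. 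The factor $\rho$ in \eqref{lem:Q4} is only obtained by using that $h_1,h_2\in X_{L,\rho}$ have the same mass, $M(h_1)=M(h_2)=1$, hence $\int_{-\infty}^\infty(h_1(z)-h_2(z))\de z=0$; subtracting this and estimating $\int|W(y,z)-1|\,|h_1(z)-h_2(z)|\de z$, with $|W-1|\leq K_0e^{\frac{\delta}{\rho}(y-z)}$ for $z>y$ and $|W-1|\leq W+1\leq Ce^{\frac{a+1}{\rho}(y-z)}+1$ for $z<y$, reproduces exactly the estimates \eqref{lem:Q2bis}--\eqref{lem:Q2ter} with $|h|$ replaced by $\|h_1-h_2\|e^{\omega}$, which is the paper's (omitted) argument.

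A smaller point: in the region $z\leq-L\rho$ the boundary term produced by the competition between $e^{\frac{a+1}{\rho}(y-z)}$ and $e^{\frac{m}{\rho}(z+L\rho)}$ is of order $\rho e^{\frac{a+1}{\rho}(y+L\rho)}\leq\rho e^{(2a+2)L}$ (it is the exponent $\frac{a+1}{\rho}$, not $\frac{m}{\rho}$, that survives at $z=-L\rho$). Your version, $\rho e^{\frac{m}{\rho}(y+L\rho)}\leq\rho e^{2mL}$ ``absorbed into $C\rho e^{(2a+2)L}$'', is not admissible: since $m>a+1$ and the generic constant must be independent of $L$, $e^{2mL}$ cannot be absorbed into $Ce^{(2a+2)L}$.
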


\begin{proof}
The proof of \eqref{lem:Q4} follows by estimates entirely similar to the ones in the proof of Lemma~\ref{lem:Q}, using the inequality $|h_1(z)-h_2(z)|\leq \|h_1-h_2\|e^{\omega(z)}$. We omit the details here.
\end{proof}

\begin{lemma}\label{lem:psi}
	Let $\psi_\rho[h]$ be the function defined in \eqref{psi}, for $h\in X_{L,\rho}$. Then the map
	\begin{equation}\label{lem:psi2}
	x\mapsto e^{-\psi_\rho[h](x)-\frac{2m}{\rho}x} \qquad\text{is monotone non-decreasing for }x\in(-\infty,-L\rho).
	\end{equation}
	In turn, also the maps $x\mapsto e^{-\frac12\psi_\rho[h](x)-\frac{a}{\rho}x}$, $x\mapsto e^{-\frac12\psi_\rho[h](x)-\frac{1}{\rho}x}$ are monotone non-decreasing in the same interval.
	Furthermore, one has for every $h_1,h_2\in X_{L,\rho}$ and for $x<y\leq L\rho$
	\begin{align} \label{lem:psi3}
	\Big| e^{\psi_\rho[h_1](y)-\psi_\rho[h_1](x)} - & e^{\psi_\rho[h_2](y)-\psi_\rho[h_2](x)} \Big| \nonumber\\
	&\leq 
	\begin{cases}
	C\rho e^{(2a+2)L}\|h_1-h_2\| e^{-\frac{a}{\rho}x}e^{\frac{2m}{\rho}(x-y)} & x\leq -L\rho,\, y\leq -L\rho, \\
	C\rho e^{(2a+2)L}\|h_1-h_2\| e^{-\frac{a}{\rho}x}e^{\frac{2m}{\rho}(x+L\rho)} & x\leq -L\rho,\, y\in[-L\rho,L\rho],\\
	C\rho e^{(3a+2)L}\|h_1-h_2\| & x\in(-L\rho,L\rho).
	\end{cases}
	\end{align}
\end{lemma}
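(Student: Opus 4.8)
The plan is to exploit the explicit structure of $\psi_\rho[h]$, namely $\psi_\rho[h](x) = \frac{1}{\rho}\int_x^{L\rho} e^{-\frac{a}{\rho}y}Q[h](y)\,\mathrm{d}y$, together with the two-sided bound $\frac12\leq Q[h]\leq\frac32$ from Lemma~\ref{lem:Q}. First I would prove the monotonicity statement \eqref{lem:psi2}. Differentiating, $\frac{\mathrm{d}}{\mathrm{d}x}\bigl(-\psi_\rho[h](x)-\frac{2m}{\rho}x\bigr) = \frac{1}{\rho}e^{-\frac{a}{\rho}x}Q[h](x) - \frac{2m}{\rho}$; for $x<-L\rho$ we have $e^{-\frac{a}{\rho}x}>e^{aL}>4m$ by the choice of $L_0$ in \eqref{parameters}, and since $Q[h](x)\geq\frac12$ the derivative is at least $\frac{1}{\rho}(2m)-\frac{2m}{\rho}>0$, giving monotonicity. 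The two companion statements for $e^{-\frac12\psi_\rho[h](x)-\frac{a}{\rho}x}$ and $e^{-\frac12\psi_\rho[h](x)-\frac{1}{\rho}x}$ follow the same way: their log-derivatives are $\frac{1}{2\rho}e^{-\frac{a}{\rho}x}Q[h](x)-\frac{a}{\rho}$ (resp.\ with $\frac1\rho$ in place of $\frac a\rho$), and again $\frac{1}{2}e^{-\frac{a}{\rho}x}\geq\frac12 e^{aL}$ dominates $a$ (resp.\ $1$) once $L\geq L_0$, using $Q[h]\geq\frac12$.

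For the Lipschitz-type estimate \eqref{lem:psi3}, the starting point is the identity
\[
\psi_\rho[h_1](y)-\psi_\rho[h_1](x) - \bigl(\psi_\rho[h_2](y)-\psi_\rho[h_2](x)\bigr)
= -\frac{1}{\rho}\int_x^{y} e^{-\frac{a}{\rho}s}\bigl(Q[h_1](s)-Q[h_2](s)\bigr)\,\mathrm{d}s\,,
\]
which I would bound in absolute value using Lemma~\ref{lem:Q3}: $|Q[h_1](s)-Q[h_2](s)|\leq C\rho e^{(2a+2)L}\|h_1-h_2\|$. This gives
\[
\bigl|\,\Delta\psi_1-\Delta\psi_2\,\bigr| \leq C e^{(2a+2)L}\|h_1-h_2\|\int_x^{y} e^{-\frac{a}{\rho}s}\,\mathrm{d}s
\leq C\rho e^{(2a+2)L}\|h_1-h_2\|\, e^{-\frac{a}{\rho}x}\,,
\]
where I wrote $\Delta\psi_i := \psi_\rho[h_i](y)-\psi_\rho[h_i](x)$ and used $\int_x^y e^{-\frac{a}{\rho}s}\,\mathrm{d}s\leq\frac{\rho}{a}e^{-\frac{a}{\rho}x}$. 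To pass from the exponents to the exponentials I would use the elementary inequality $|e^p-e^q|\leq |p-q|\,e^{\max\{p,q\}}$ together with an upper bound on $\Delta\psi_i$ itself. Here $\Delta\psi_i = \frac{1}{\rho}\int_y^{L\rho}e^{-\frac{a}{\rho}s}Q[h_i](s)\,\mathrm{d}s\leq \frac{C}{\rho}\int_y^{L\rho}e^{-\frac{a}{\rho}s}\,\mathrm{d}s\leq C e^{-\frac{a}{\rho}y}\leq C e^{-\frac{a}{\rho}x}$ when $x\le y$, so $e^{\max\{\Delta\psi_1,\Delta\psi_2\}}\le e^{Ce^{-\frac a\rho x}}$; combined with the monotonicity \eqref{lem:psi2} this controls the exponential factor in each of the three regimes, producing the stated $e^{-\frac{a}{\rho}x}e^{\frac{2m}{\rho}(x-y)}$, $e^{-\frac{a}{\rho}x}e^{\frac{2m}{\rho}(x+L\rho)}$, and $e^{(3a+2)L}$ weights respectively (in the last, bounded regime $x\in(-L\rho,L\rho)$ one gains an extra factor $e^{aL}$ from $e^{-\frac{a}{\rho}x}\le e^{aL}$, which accounts for the shift from $(2a+2)L$ to $(3a+2)L$).

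The main obstacle is the bookkeeping in the second and third regimes of \eqref{lem:psi3}. When $y$ crosses into $[-L\rho,L\rho]$ the monotonicity of $e^{-\psi_\rho[h]-\frac{2m}{\rho}x}$ is only available for $x<-L\rho$, so I would split the increment $\psi_\rho[h_i](y)-\psi_\rho[h_i](x)$ at $s=-L\rho$ and estimate the two pieces separately, using the monotone decay factor $e^{\frac{2m}{\rho}(x+L\rho)}$ coming from the part of the integral below $-L\rho$ and absorbing the part above $-L\rho$ (which is of size $\leq C e^{aL}$ in the exponent, hence contributes a bounded multiplicative constant) into $C$. The case $x\in(-L\rho,L\rho)$ is the simplest since there all relevant exponentials are order one up to powers of $e^{aL}$; here one only has to track that $\int_x^y e^{-\frac a\rho s}\mathrm{d}s\le\frac\rho a e^{aL}$, so the total estimate picks up one factor of $e^{aL}$ beyond the $e^{(2a+2)L}$ from Lemma~\ref{lem:Q3}. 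Throughout, the constant $c_0$ of Lemma~\ref{lem:Q} and condition \eqref{parameters2bis} guarantee $Q[h_i]$ stays in $[\frac12,\frac32]$, which is what makes every exponential factor above finite and controllable.
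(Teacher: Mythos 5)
Your treatment of the monotonicity statements is correct and is the same computation as in the paper: $\frac{\de}{\de x}\psi_\rho[h](x)=-\frac{1}{\rho}e^{-\frac{a}{\rho}x}Q[h](x)\le -\frac{1}{2\rho}e^{aL}\le-\frac{2m}{\rho}$ for $x<-L\rho$, using $Q[h]\ge\frac12$ and $e^{aL}\ge 4m$, and the two companion maps follow from the same inequality since $m>a+1$. The skeleton of your argument for \eqref{lem:psi3} --- the elementary inequality $|e^p-e^q|\le|p-q|\,e^{\max\{p,q\}}$, Lemma~\ref{lem:Q3} to control $|p-q|$ via $\int_x^y e^{-\frac{a}{\rho}s}\de s\le\frac{\rho}{a}e^{-\frac{a}{\rho}x}$, and the monotonicity \eqref{lem:psi2} to control the exponential prefactor --- is also exactly the paper's route. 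However, there is a genuine error at the decisive step, namely the control of $e^{\max_i(\psi_\rho[h_i](y)-\psi_\rho[h_i](x))}$.

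You write $\psi_\rho[h_i](y)-\psi_\rho[h_i](x)=\frac{1}{\rho}\int_y^{L\rho}e^{-\frac{a}{\rho}s}Q[h_i](s)\de s$; this is $\psi_\rho[h_i](y)$ itself, not the increment. The correct identity is $\psi_\rho[h_i](y)-\psi_\rho[h_i](x)=-\frac{1}{\rho}\int_x^y e^{-\frac{a}{\rho}s}Q[h_i](s)\de s\le 0$, because $Q[h_i]\ge\frac12>0$ and $x<y$. This sign is the whole point: it gives $e^{\psi_\rho[h_i](y)-\psi_\rho[h_i](x)}\le 1$ unconditionally, which is what settles the regime $x\in(-L\rho,L\rho)$ (the extra $e^{aL}$ then comes only from $e^{-\frac{a}{\rho}x}\le e^{aL}$) and disposes of the piece of the increment above $-L\rho$ in the mixed regime; for $x<y\le-L\rho$ the monotonicity \eqref{lem:psi2} upgrades it to $e^{\psi_\rho[h_i](y)-\psi_\rho[h_i](x)}\le e^{\frac{2m}{\rho}(x-y)}$, and in the mixed regime one splits at $-L\rho$ and multiplies $e^{\psi_\rho[h_i](-L\rho)-\psi_\rho[h_i](x)}\le e^{\frac{2m}{\rho}(x+L\rho)}$ by $e^{\psi_\rho[h_i](y)-\psi_\rho[h_i](-L\rho)}\le 1$. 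Your substitutes do not deliver the stated bounds: the estimate $e^{\max_i(\psi_\rho[h_i](y)-\psi_\rho[h_i](x))}\le e^{Ce^{-\frac{a}{\rho}x}}$ is doubly exponentially large as $x\to-\infty$ and cannot produce the decaying weight $e^{\frac{2m}{\rho}(x-y)}$ of the first regime; and absorbing into $C$ the part of the increment above $-L\rho$ on the grounds that it is of size $Ce^{aL}$ in the exponent means absorbing a factor $e^{Ce^{aL}}$, which is not a constant independent of $L$ --- but the constant in \eqref{lem:psi3} must be independent of $L$ and $\rho$, and the explicit powers of $e^{L}$ there are precisely what the fixed-point argument later relies on when first choosing $L$ large and then $\rho$ small. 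Once these two steps are replaced by the sign observation above, your proof coincides with the paper's.
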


\begin{proof}
The statement \eqref{lem:psi2} is true provided $\frac{\de}{\de x}\psi_\rho[h](x)+\frac{2m}{\rho}\leq 0$.
Using Lemma~\ref{lem:Q} we have for $x<-L\rho$
$$
\frac{\de}{\de x}(\psi_\rho[h](x)) = - \textstyle\frac{1}{\rho} e^{-\frac{a}{\rho}x}Q[h](x) \leq -\frac{1}{2\rho}e^{-\frac{a}{\rho}x} \leq -\frac{1}{2\rho}e^{aL} \leq -\frac{2m}{\rho}
$$
by the choice of $L\geq L_0\geq\frac{1}{a}\ln(4m)$ in \eqref{parameters}. The monotonicity of the other two maps in the statement is an easy consequence of the same inequality.

We now prove \eqref{lem:psi3}. We first observe that by \eqref{lem:Q4}
\begin{align} \label{lem:psi4}
\Big| e^{\psi_\rho[h_1](y)-\psi_\rho[h_1](x)} &- e^{\psi_\rho[h_2](y)-\psi_\rho[h_2](x)} \Big| \nonumber\\
& \leq \max_{i=1,2} \bigl( e^{\psi_\rho[h_i](y)-\psi_\rho[h_i](x)} \bigr) \frac{1}{\rho}\int_x^y \expaz \big| Q[h_1](z)-Q[h_2](z)\big| \de z \nonumber\\
& \leq Ce^{(2a+2)L}\|h_1-h_2\| \max_{i=1,2} \bigl( e^{\psi_\rho[h_i](y)-\psi_\rho[h_i](x)} \bigr) \int_x^y \expaz\de z \nonumber\\
& \leq C\rho e^{(2a+2)L}\|h_1-h_2\| e^{-\frac{a}{\rho}x} \max_{i=1,2} \bigl( e^{\psi_\rho[h_i](y)-\psi_\rho[h_i](x)} \bigr)\,.
\end{align}
We immediately obtain the estimate \eqref{lem:psi3} in the region $x\in(-L\rho,L\rho)$ simply by observing that the maximum in \eqref{lem:psi4} is actually bounded by the constant 1, since $x<y$ and $Q[h]\geq0$ by \eqref{lem:Q2}.
For $x\leq -L\rho$ we have instead, by using the monotonicity property \eqref{lem:psi2},
\begin{align*}
\max_{i=1,2} \bigl( e^{\psi_\rho[h_i](y)-\psi_\rho[h_i](x)} \bigr) \leq e^{\frac{2m}{\rho}(x-y)}
\end{align*}
for $x<y\leq -L\rho$, while
\begin{align*}
\max_{i=1,2} \bigl( e^{\psi_\rho[h_i](y)-\psi_\rho[h_i](x)} \bigr)
\leq e^{\frac{2m}{\rho}(x+L\rho)} \max_{i=1,2} \bigl( e^{\psi_\rho[h_i](y)-\psi_\rho[h_i](-L\rho)} \bigr)
\leq e^{\frac{2m}{\rho}(x+L\rho)} 
\end{align*}
for $y\in(-L\rho,L\rho)$. By inserting these inequalities in \eqref{lem:psi4} we conclude that \eqref{lem:psi3} holds.
\end{proof}

We now obtain explicit bounds on the remainders $R[h]$ and $\widetilde{R}[h]$.

\begin{lemma}\label{lem:remainder1-2}
	Let $R[h]$ be the remainder term defined in \eqref{eq:remainder1}--\eqref{eq:remainder2}, for $h\in X_{L,\rho}$.
	The following estimates hold:
	\begin{align} \label{eq:est-rem1-2}
	|R[h](x)| \leq
	\begin{cases}
	C\rho e^{-\frac{a}{\rho}x} e^{\frac{2m}{\rho}(x+L\rho)} & \text{for } x \leq -L\rho,\\
	C\rho e^{(a+2)L} & \text{for } x\in(-L\rho,L\rho),\\
	C \bigl( e^{-aL} + \rho e^{aL}\bigr)e^{-\frac12(x-L\rho)} & \text{for }x \geq L\rho,
	\end{cases}
	\end{align}
	\begin{align} \label{eq:est-rem1-2bis}
	|R[h_1](x)-R[h_2](x)| \leq
	\begin{cases}
	C\rho \|h_1-h_2\| e^{-\frac{a}{\rho}x} e^{\frac{2m}{\rho}(x+L\rho)} & \text{for } x \leq -L\rho,\\
	C\rho \|h_1-h_2\|e^{(a+2)L} & \text{for }x\in(-L\rho,L\rho),\\
	C \bigl( e^{-aL} + \rho e^{aL}\bigr)\|h_1-h_2\|e^{-\frac12(x-L\rho)} & \text{for }x \geq L\rho,
	\end{cases}
	\end{align}
	for every $h,h_1,h_2\in X_{L,\rho}$.
\end{lemma}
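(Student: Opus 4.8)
The plan is to estimate each of the three pieces of $R[h]$ separately in each of the three $x$-ranges, using throughout the pointwise bound $|h(x)|\le 2e^{\omega(x)}$ together with the integral estimates of Lemma~\ref{lem:prelim} and the kernel bounds \eqref{kernelA}--\eqref{regions3}. For $x<L\rho$ the remainder $R[h]$ is the single double integral in \eqref{eq:remainder1}, over the region $\{y<x,\ z<x+\rho\ln(1-e^{(y-x)/\rho})\}$; note that this inner region lies in $\{z<x-\rho\ln2\}$ by \eqref{regions2}, so it is contained in $B_\rho$ and the kernel bound \eqref{kernelB} applies: $\expaz K(\expyz,1)\le(1+K_0)e^{-az/\rho}e^{(y-z)/\rho}=(1+K_0)e^{y/\rho}e^{-(a+1)z/\rho}$. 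Carrying out the $z$-integral up to $z=x+\rho\ln(1-e^{(y-x)/\rho})$ and bounding $1-e^{(y-x)/\rho}$ crudely from below, one obtains an extra small factor — ultimately of order $\rho$ — times $e^{y/\rho}e^{-(a+1)x/\rho}$, and then the $y$-integral against $|h(y)|\le 2e^{\omega(y)}$ is controlled via Lemma~\ref{lem:prelim}; the factor $e^{-ax/\rho}e^{2m(x+L\rho)/\rho}$ or $e^{(a+2)L}$ then emerges from the explicit form \eqref{parameters3} of $\omega$ according to whether $x\le-L\rho$ or $x\in(-L\rho,L\rho)$.

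For $x\ge L\rho$, $R[h]$ has the three summands in \eqref{eq:remainder2}. The first, $-\frac1\rho\bigl(\int_x^\infty\expay h(y)\,\de y\bigr)\bigl(\int_x^\infty h(z)\,\de z\bigr)$, is bounded by using $|h|\le 2e^{-(y-L\rho)/2}$ on $[x,\infty)$ to get $\int_x^\infty|h|\le Ce^{-(x-L\rho)/2}$ and $\int_x^\infty\expay|h(y)|\,\de y\le Ce^{-(x-L\rho)/2}\cdot e^{-ax/\rho}$, which since $x\ge L\rho$ is $\le Ce^{-(x-L\rho)/2}e^{-aL}$; dividing by $\rho$ and keeping only the worse of the two half-exponentials gives the claimed $e^{-aL}$-term. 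The second summand uses \eqref{regions3}: on the relevant range $z>x>y$ one has $|\expaz K(\expyz,1)-\expay|\le K_0 e^{-ay/\rho}e^{(1-\delta)(y-z)/\rho}$ (after a short computation combining $\expaz K(\expyz,1)-\expay=e^{-ay/\rho}(K(\expyz,1)-e^{(y-z)/\rho})e^{-(z-y)/\rho}\cdot$const), so the $z$-integral over $(x,\infty)$ converges and produces a factor $e^{-(1-\delta)(x-y)/\rho}$; the remaining $y$-integral against $e^{-ay/\rho}|h(y)|$ is again bounded by Lemma~\ref{lem:prelim}, yielding a $\rho e^{aL}$-type contribution times $e^{-(x-L\rho)/2}$. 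The third summand is the double integral over $\{y<x<z<\ldots\}$ wait — over $\{x+\rho\ln(1-e^{(y-x)/\rho})<z<x\}$, which is precisely the region $\Omega_\rho$ split into $A_\rho\cup B_\rho$; on $A_\rho$ use \eqref{kernelA} and on $B_\rho$ use \eqref{kernelB}, in each case performing the $z$-integral (which has length at most $\rho\ln2$ in $A_\rho$, hence small, and is geometrically convergent in $B_\rho$) and then the $y$-integral via Lemma~\ref{lem:prelim}; this again contributes at the level $\rho e^{aL}e^{-(x-L\rho)/2}$.

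The estimate \eqref{eq:est-rem1-2bis} for the difference $R[h_1]-R[h_2]$ is obtained by exactly the same computations, writing each quadratic expression $h_ih_i$ as a telescoping sum $h_1(h_1-h_2)+(h_1-h_2)h_2$ (and similarly for the product of two integrals in the first summand for $x\ge L\rho$), so that one of the two factors carries $|h_1-h_2|\le\|h_1-h_2\|e^{\omega(\cdot)}$ and the other is bounded by $2e^{\omega(\cdot)}$ as before; every bound then acquires a factor $\|h_1-h_2\|$ in place of one factor of the $L^\infty$-norm, and the constants are unchanged. The main obstacle, though purely computational, is bookkeeping the interplay between the cutoff $z<x+\rho\ln(1-e^{(y-x)/\rho})$ and the exponential weights: one must check that the logarithmic correction in the upper limit of the $z$-integral really does produce a gain of a full power of $\rho$ (and not, say, of $\rho\ln(1/\rho)$) uniformly in $x$, in particular near the corner where $y\uparrow x$. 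This is handled by splitting the $y$-integral at $y=x-\rho$ (or similar): for $y$ near $x$ the factor $e^{\omega(y)}$ is comparable to $e^{\omega(x)}$ and the short $z$-interval supplies the smallness, while for $y$ far from $x$ the quantity $1-e^{(y-x)/\rho}$ is bounded below by a constant and the decay of $e^{y/\rho}$ relative to $e^{x/\rho}$ supplies the gain. Once this is in place, the three cases of $\omega$ from \eqref{parameters3} produce exactly the three cases listed in the statement.
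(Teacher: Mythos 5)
Your treatment of the range $x<L\rho$ contains a genuine error. You claim that the integration region of \eqref{eq:remainder1}, namely $\{y<x,\ z<x+\rho\ln(1-e^{(y-x)/\rho})\}$, lies in $\{z<x-\rho\ln 2\}$ and hence in $B_\rho$, so that \eqref{kernelB} applies throughout. Neither containment holds. First, $B_\rho\subset\Omega_\rho$ consists of points \emph{above} the curve $z=x+\rho\ln(1-e^{(y-x)/\rho})$ (i.e.\ between the curve and $z=x$), whereas the region in \eqref{eq:remainder1} is the one \emph{below} that curve, so it is disjoint from $\Omega_\rho$. Second, and more importantly, for $y\ll x$ the upper limit $x+\rho\ln(1-e^{(y-x)/\rho})$ is arbitrarily close to $x$, so $z$ may well exceed $y$ (indeed $z$ can lie in $(x-\rho\ln2,x)$); the constraint only forces $\min\{y,z\}<x-\rho\ln2$, not $z<x-\rho\ln2$. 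On the sub-region where $z>y$ the argument $e^{(y-z)/\rho}$ is smaller than $1$ and by \eqref{kernel3} the kernel behaves like $e^{\frac{a}{\rho}(z-y)}$, which is large, while your proposed bound $(1+K_0)e^{\frac{y-z}{\rho}}$ tends to zero there — so it is not an upper bound at all, and the whole computation for $x\le-L\rho$ and $x\in(-L\rho,L\rho)$ built on it collapses. The paper avoids this by simply enlarging the domain to $\{y<x,\ z<x\}$ (no gain from the logarithmic cutoff is needed) and splitting at $z=y$: for $z<y$ one uses $K(s,1)\le(1+K_0)s$ (giving $e^{-\frac{a}{\rho}z}e^{\frac{y-z}{\rho}}$), and for $y<z<x$ one uses $K(s,1)\le Cs^{-a}$ for $s<2$ (giving $e^{-\frac{a}{\rho}z}K\le Ce^{-\frac{a}{\rho}y}$); inserting $|h|\le2e^{\omega}$ and computing the explicit exponential integrals (using $m>a+1$) then produces the factor $\rho$ and the stated exponents. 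Your worry about extracting $\rho$ versus $\rho\ln(1/\rho)$ from the cutoff is therefore moot, but the missing case $z>y$ is a real gap that must be filled exactly along these lines.

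For $x\ge L\rho$ your decomposition into the three summands of \eqref{eq:remainder2} and the use of \eqref{kernel3}, \eqref{kernelA}--\eqref{kernelB}, Lemma~\ref{lem:prelim} and the bound $|h|\le 2e^{-\frac12(\cdot-L\rho)}$ is the same as the paper's and is essentially sound, except for one bookkeeping slip in the first summand: bounding $\int_x^\infty e^{-\frac{a}{\rho}y}|h(y)|\de y$ merely by $Ce^{-\frac{a}{\rho}x}e^{-\frac12(x-L\rho)}$ and then dividing by $\rho$ leaves an uncontrolled factor $\rho^{-1}$, which does not fit under $C(e^{-aL}+\rho e^{aL})e^{-\frac12(x-L\rho)}$ near $x=L\rho$. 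You must keep the factor $\rho$ coming from integrating $e^{-\frac{a}{\rho}y}$ (i.e.\ $\int_x^\infty e^{-\frac{a}{\rho}y}e^{-\frac12(y-L\rho)}\de y\le \frac{\rho}{a}e^{-\frac{a}{\rho}x}e^{-\frac12(x-L\rho)}$), which cancels the $\rho^{-1}$ and yields $Ce^{-aL}e^{-(x-L\rho)}$ as in the paper. The telescoping device for \eqref{eq:est-rem1-2bis} is fine and is what the paper implicitly does.
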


\begin{proof}
We estimate $R[h]$ separately in the three regions $(-\infty,-L\rho]$, $(-L\rho,L\rho)$, $[L\rho,\infty)$.
We just give the proof of \eqref{eq:est-rem1-2}, since the bound \eqref{eq:est-rem1-2bis} on $|R[h_1](x)-R[h_2](x)|$ follows by completely analogous arguments.

\medskip
\noindent\textit{Case 1: $x \leq -L\rho$.}
Using the expression in \eqref{eq:remainder1}, the properties of the kernel \eqref{kernel3}--\eqref{kernel4}, and the bound $|h(\xi)|\leq 2e^{\omega(\xi)}$ for $\omega$ as in \eqref{parameters3}, we have
\begin{align*}
|R[h](x)| &\leq \frac{1}{\rho}\int_{-\infty}^x \de y \int_{-\infty}^x \expaz K(\expyz,1)|h(y)||h(z)|\de z \\
& \leq \frac{4(K_0+1)}{\rho}\int_{-\infty}^x \de y \int_{-\infty}^y \expaz\expyz e^{\frac{m}{\rho}(y+L\rho)}e^{\frac{m}{\rho}(z+L\rho)}\de z \\
&\qquad+ \frac{4(K_0+1)}{\rho}\int_{-\infty}^x \de y \int_{y}^x \expay e^{\frac{m}{\rho}(y+L\rho)}e^{\frac{m}{\rho}(z+L\rho)}\de z \\
& \leq C\rho e^{-\frac{a}{\rho}x}e^{\frac{2m}{\rho}(x+L\rho)}\,,
\end{align*}
the last estimate following by computing explicitly the integrals (recall that $m>a+1$).

\medskip
\noindent\textit{Case 2: $-L\rho < x<L\rho$.}
Using as before \eqref{kernel3}--\eqref{kernel4} and $|h(\xi)|\leq 2e^{\omega(\xi)}$, we have
\begin{align*}
|R[h](x)| &\leq \frac{1}{\rho}\int_{-\infty}^x \de y \int_{-\infty}^x \expaz K(\expyz,1)|h(y)||h(z)|\de z \\
& \leq \frac{C}{\rho}\int_{-\infty}^x \de y \int_{-\infty}^y \expaz\expyz e^{\omega(y)}e^{\omega(z)}\de z + \frac{C}{\rho}\int_{-\infty}^x \de y \int_{y}^x \expay e^{\omega(y)}e^{\omega(z)}\de z\,.
\end{align*}
It now follows by an elementary computation that the last integrals are bounded by $C\rho e^{(a+2)L}$.

\medskip
\noindent\textit{Case 3: $x \geq L\rho$.} We estimate separately the three terms appearing in the expression \eqref{eq:remainder2} for $R[h]$. For the first integral, we have
\begin{align*}
|R_1[h](x)| &\leq \frac{1}{\rho}\int_x^\infty \expay|h(y)|\de y \int_x^\infty |h(z)|\de z \\
& \leq \frac{4}{\rho} \int_x^\infty \expay e^{-\frac{1}{2}(y-L\rho)}\de y \int_x^\infty e^{-\frac12(z-L\rho)}\de z \leq C e^{-aL}e^{-(x-L\rho)}\,.
\end{align*}
For the second term, we have using \eqref{kernel3}
\begin{align*}
|R_2[h](x)| & \leq \frac{1}{\rho}\int_{-\infty}^x \de y \int_x^\infty \Big| \expaz K(\expyz,1)-\expay\Big| |h(y)||h(z)|\de z \\
& \leq \frac{K_0}{\rho}\int_{-\infty}^x \de y \int_x^\infty \expay e^{\frac{\delta}{\rho}(y-z)}|h(y)||h(z)|\de z \\
& \leq \frac{2K_0}{\rho} \int_{-\infty}^x e^{\frac{\delta-a}{\rho}y}|h(y)|\de y \int_{x}^\infty e^{-\frac{\delta}{\rho}z}e^{-\frac12(z-L\rho)}\de z
\leq C\rho e^{aL}e^{-\frac12(x-L\rho)}\,,
\end{align*}
where we used \eqref{eq:prelim2} in the last inequality.
Finally, the last integral in \eqref{eq:remainder2} is over the region $\Omega_\rho$, which we split into the two subregions $A_\rho$, $B_\rho$, see \eqref{eq:regions}, as explained at the beginning of Section~\ref{sect:fixedpoint}: we have, using \eqref{kernelA}--\eqref{kernelB},
\begin{align*}
|R_3[h](x)| & \leq \frac{1}{\rho}\iint_{\Omega_\rho} \expaz K(\expyz,1)|h(y)||h(z)|\de y\de z \\
& \leq \frac{C}{\rho}\iint_{A_\rho} \expay|h(y)||h(z)|\de y \de z + \frac{C}{\rho}\iint_{B_\rho}\expaz\expyz|h(y)||h(z)|\de y\de z\,.
\end{align*}
We estimate the two terms separately: using that $|h(z)|\leq Ce^{-\frac12(x-L\rho)}$ for $z\in(x-\rho\ln2,x)$, we obtain
\begin{align*}
\frac{1}{\rho}\iint_{A_\rho} \expay|h(y)||h(z)|\de y \de z
& \leq \frac{C}{\rho}e^{-\frac12(x-L\rho)}\int_{x-\rho\ln2}^x \de z \int_{x+\rho\ln(1-e^{\frac{z-x}{\rho}})}^x \expay |h(y)|\de y \\
& \leq Ce^{-\frac12(x-L\rho)}\int_{-\infty}^\infty \expay|h(y)|\de y
\leq C\rho e^{aL} e^{-\frac12(x-L\rho)}\,,
\end{align*}
where we used also \eqref{eq:prelim1} in the last inequality.
Observe now that for $(y,z)\in B_\rho$ one has $y\in(x-\rho\ln2,x)$ and, in turn, $|h(y)|\leq Ce^{-\frac12(x-L\rho)}$.
Hence for the integral in the region $B_\rho$ we have
\begin{align*}
\frac{1}{\rho}\iint_{B_\rho}\expaz\expyz & |h(y)||h(z)|\de y\de z \\
& \leq \frac{C}{\rho}e^{-\frac12(x-L\rho)} \int_{-\infty}^{x-\rho\ln2} e^{-\frac{a+1}{\rho}z}|h(z)|\int_{x+\rho\ln(1-e^{\frac{z-x}{\rho}})}^x e^{\frac{y}{\rho}}\de y \de z\\
& = Ce^{-\frac12(x-L\rho)} \int_{-\infty}^{x-\rho\ln2}\expaz|h(z)|\de z
\leq C\rho e^{aL} e^{-\frac12(x-L\rho)}\,,
\end{align*}
the last inequality following from \eqref{eq:prelim1}.
Collecting all the previous estimates, we obtain the bound for $R[h]=R_1[h]+R_2[h]+R_3[h]$ in the region $\{x\geq L\rho\}$.
\end{proof}

In the computations leading to the result in the following lemma, it is often useful to bear in mind the identity
\begin{equation} \label{eq:derivativepsi}
\frac{\de}{\de\zeta}(\psi_\rho[h](\zeta)) = -\textstyle\frac{1}{\rho}e^{-\frac{a}{\rho}\zeta} Q[h](\zeta)\,.
\end{equation}
This allows, for instance, to compute explicitly the integral (for $\xi<\eta \leq L\rho$)
\begin{equation} \label{eq:integralpsi}
\frac{1}{\rho}\int_{\xi}^\eta e^{-\frac{a}{\rho}\zeta}Q[h](\zeta) e^{\psi_\rho[h](\zeta)}\de \zeta = e^{\psi_\rho[h](\xi)}- e^{\psi_\rho[h](\eta)}\,.
\end{equation}

\begin{lemma}\label{lem:remainder3-4}
	Let $\widetilde{R}[h]$ be the remainder term defined in \eqref{eq:remainder3}--\eqref{eq:remainder4}, for $h\in X_{L,\rho}$.
	The following estimates hold:
	\begin{align} \label{eq:est-rem3-4}
	|\widetilde{R}[h](x)| \leq
	\begin{cases}
	C\rho e^{(m+a+2)L} e^{\frac{m}{\rho}(x+L\rho)} & \text{for } x \leq -L\rho,\\
	C\rho e^{(a+2)L} & \text{for } x\in(-L\rho,L\rho),\\
	C \bigl( e^{-aL} + \rho e^{aL}\bigr)e^{-\frac12(x-L\rho)} & \text{for }x \geq L\rho,
	\end{cases}
	\end{align}
	\begin{align} \label{eq:est-rem3-4bis}
	|\widetilde{R}[h_1](x)-\widetilde{R}[h_2](x)| \leq
	\begin{cases}
	C\rho \|h_1-h_2\| e^{(m+4a+4)L} e^{\frac{m}{\rho}(x+L\rho)} & \text{for } x \leq -L\rho,\\
	C\rho \|h_1-h_2\|e^{(5a+4)L} & \text{for }x\in(-L\rho,L\rho),\\
	C \bigl( e^{-aL} + \rho e^{aL}\bigr)\|h_1-h_2\|e^{-\frac12(x-L\rho)} & \text{for }x \geq L\rho,
	\end{cases}
	\end{align}
	for every $h,h_1,h_2\in X_{L,\rho}$.
\end{lemma}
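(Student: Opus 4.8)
The plan is to insert the bounds on $R[h]$ from Lemma~\ref{lem:remainder1-2} into the explicit formulas \eqref{eq:remainder3}--\eqref{eq:remainder4} for $\widetilde{R}[h]$, treating the three regions $x\leq-L\rho$, $-L\rho<x<L\rho$ and $x\geq L\rho$ separately and using the monotonicity and integral identities for $\psi_\rho[h]$ collected in Lemma~\ref{lem:psi}. I would prove only \eqref{eq:est-rem3-4}; the difference estimate \eqref{eq:est-rem3-4bis} then follows by telescoping the products in \eqref{eq:remainder3}--\eqref{eq:remainder4}, writing each bracket $e^{-(\psi_\rho[h_1](x)-\psi_\rho[h_1](y))}Q[h_1]R[h_1]-e^{-(\psi_\rho[h_2](x)-\psi_\rho[h_2](y))}Q[h_2]R[h_2]$ as a sum of three terms in which only one of the factors $e^{-(\psi_\rho[\,\cdot\,])}$, $Q[\,\cdot\,]$, $R[\,\cdot\,]$ changes, estimated by \eqref{lem:psi3}, \eqref{lem:Q4} and \eqref{eq:est-rem1-2bis} respectively, and then repeating the computations below; the larger powers of $e^{L}$ appearing in \eqref{eq:est-rem3-4bis} are precisely those carried along by \eqref{lem:psi3} and \eqref{lem:Q4}.

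In the region $x\geq L\rho$ the estimate is immediate: by \eqref{eq:est-rem1-2} one has $|R[h](y)|\leq C(e^{-aL}+\rho e^{aL})e^{-\frac12(y-L\rho)}$, and since $\frac{1}{1+\rho}>\frac12$ for $\rho\in(0,1)$ the convolution integral in \eqref{eq:remainder4} obeys $\frac{1}{1+\rho}\int_{L\rho}^x e^{\frac{1}{1+\rho}(y-x)}e^{-\frac12(y-L\rho)}\de y\leq Ce^{-\frac12(x-L\rho)}$, which gives the third line of \eqref{eq:est-rem3-4}. In the region $-L\rho<x<L\rho$ the key point is the identity \eqref{eq:integralpsi}: since $\psi_\rho[h](L\rho)=0$, pulling $e^{-\psi_\rho[h](x)}$ out of the integral in \eqref{eq:remainder3} and using $|R[h](y)|\leq C\rho e^{(a+2)L}$ (valid for $y\in(x,L\rho)$) one gets
\[
\frac{1}{\rho}\int_x^{L\rho}e^{-(\psi_\rho[h](x)-\psi_\rho[h](y))}\expay Q[h](y)\de y = 1-e^{-\psi_\rho[h](x)}\leq 1\,,
\]
so that the whole integral term is bounded by $C\rho e^{(a+2)L}$; together with $|R[h](x)|\leq C\rho e^{(a+2)L}$ this yields the middle line of \eqref{eq:est-rem3-4}.

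The delicate case is $x\leq-L\rho$. First, by \eqref{eq:est-rem1-2}, $|R[h](x)|\leq C\rho e^{-\frac{a}{\rho}x}e^{\frac{2m}{\rho}(x+L\rho)}=C\rho e^{aL}e^{\frac{2m-a}{\rho}(x+L\rho)}\leq C\rho e^{aL}e^{\frac{m}{\rho}(x+L\rho)}$, using $2m-a>m$ and $x+L\rho\leq0$. For the integral term in \eqref{eq:remainder3} I would split $\int_x^{L\rho}=\int_x^{-L\rho}+\int_{-L\rho}^{L\rho}$. On $(-L\rho,L\rho)$ one uses $|R[h](y)|\leq C\rho e^{(a+2)L}$, $Q[h]\leq\frac32$, and \eqref{lem:psi2} in the form $e^{-(\psi_\rho[h](x)-\psi_\rho[h](y))}\leq e^{\frac{2m}{\rho}(x+L\rho)}$ for $y\geq-L\rho$, obtaining a contribution $\leq C\rho e^{(2a+2)L}e^{\frac{2m}{\rho}(x+L\rho)}$. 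On $(x,-L\rho)$ one uses $|R[h](y)|\leq C\rho e^{-\frac{a}{\rho}y}e^{\frac{2m}{\rho}(y+L\rho)}$ and $e^{-(\psi_\rho[h](x)-\psi_\rho[h](y))}\leq e^{\frac{2m}{\rho}(x-y)}$ (again \eqref{lem:psi2}); after computing the resulting elementary integral and collecting exponents, this contribution is bounded by $C\rho e^{2aL}e^{\frac{2m-2a}{\rho}(x+L\rho)}$. Since $m>3a$ gives $2m-2a>m$, and all the $e^{L}$-exponents that appear ($2a+2$, $2a$, $a$) are $\leq m+a+2$, using $x+L\rho\leq0$ once more we conclude $|\widetilde{R}[h](x)|\leq C\rho e^{(m+a+2)L}e^{\frac{m}{\rho}(x+L\rho)}$, the first line of \eqref{eq:est-rem3-4}.

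The main obstacle is the bookkeeping in the region $x\leq-L\rho$: one must match the precise rate at which $R[h]$ decays towards $-\infty$, supplied by Lemma~\ref{lem:remainder1-2}, against the growth allowed by the weight $e^{\omega}$, i.e.\ the factor $e^{\frac{m}{\rho}(x+L\rho)}$, and the argument closes only because the choice \eqref{parameters} of $m$ (in particular $m>3a$ and $m>a+1$) makes the combined exponents that arise in $\frac1\rho(x+L\rho)$ --- such as $2m-a$, $2m-2a$ and, in the difference estimate, $2m-3a$ --- strictly larger than $m$ on the half-line $x+L\rho\leq0$, so that they may be harmlessly replaced by $m$. The remaining work is purely notational: carrying the powers of $e^{L}$ produced by \eqref{lem:psi3} and \eqref{lem:Q4} through the telescoping to reach the exponents stated in \eqref{eq:est-rem3-4bis}.
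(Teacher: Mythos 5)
Your proof of \eqref{eq:est-rem3-4} is correct and essentially identical to the paper's: the same splitting into the three regions (and of $\int_x^{L\rho}$ into $\int_x^{-L\rho}+\int_{-L\rho}^{L\rho}$ when $x\leq-L\rho$), the same use of the identity \eqref{eq:integralpsi} (your cruder bound $Q[h]\leq\frac32$ plus $\int e^{-\frac{a}{\rho}y}\de y$ only costs a harmless extra $e^{aL}$), the same monotonicity \eqref{lem:psi2}, and the same exponent bookkeeping with $2m-a$, $2m-2a$ absorbed into $m$ thanks to $m>\max\{a+1,3a\}$ and $x+L\rho\leq0$. For \eqref{eq:est-rem3-4bis} you give only a sketch, but the telescoping of the integrand into three differences controlled by \eqref{lem:psi3}, \eqref{lem:Q4} and \eqref{eq:est-rem1-2bis}, with the larger powers of $e^{L}$ inherited from those lemmas and the exponent $2m-3a$ handled by $m>3a$, is precisely how the paper argues, so the route is the same and nothing essential is missing.
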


\begin{proof}
We use Lemma~\ref{lem:remainder1-2} to estimate the integral remainder in the definition of $\widetilde{R}[h]$.
As before, we proceed separately in the three regions $(-\infty,-L\rho]$, $(-L\rho,L\rho)$, $[L\rho,\infty)$.

\medskip
\noindent\textit{Case 1: $-L\rho < x < L\rho$.}
To prove \eqref{eq:est-rem3-4}, we find an upper bound for the term
\begin{align*}
\bigg| \frac{1}{\rho}\int_x^{L\rho}e^{-(\psi_\rho[h](x)-\psi_\rho[h](y))} & \expay Q[h](y)R[h](y)\de y \bigg| \\
& \xupref{eq:est-rem1-2}{\leq} C e^{(a+2)L} \int_x^{L\rho} e^{-(\psi_\rho[h](x)-\psi_\rho[h](y))} \expay Q[h](y) \de y \\
& = C\rho e^{(a+2)L} \Bigl( 1 - e^{-\psi_\rho[h](x)} \Bigr) \leq C\rho e^{(a+2)L}\,,
\end{align*}
where we computed explicitly the integral, according to \eqref{eq:integralpsi}.

To obtain the estimate \eqref{eq:est-rem3-4bis} in the same region, we have to bound the difference
\begin{align} \label{tec0}
\frac{1}{\rho}&\int_x^{L\rho} \Big| e^{\psi_\rho[h_1](y)-\psi_\rho[h_1](x)} Q[h_1](y)R[h_1](y)
- e^{\psi_\rho[h_2](y)-\psi_\rho[h_2](x)} Q[h_2](y)R[h_2](y)\Big| \expay\de y \nonumber \\
& \leq \frac{1}{\rho}\int_x^{L\rho} \Big| e^{\psi_\rho[h_1](y)-\psi_\rho[h_1](x)} - e^{\psi_\rho[h_2](y)-\psi_\rho[h_2](x)} \Big| Q[h_1](y) |R[h_1](y)| \expay\de y \\
& \qquad + \frac{1}{\rho}\int_x^{L\rho} e^{\psi_\rho[h_2](y)-\psi_\rho[h_2](x)} \big|Q[h_1](y)-Q[h_2](y) \big| |R[h_1](y)| \expay\de y \nonumber \\
& \qquad + \frac{1}{\rho}\int_x^{L\rho} e^{\psi_\rho[h_2](y)-\psi_\rho[h_2](x)} Q[h_2](y) \big| R[h_1](y) - R[h_2](y) \big| \expay\de y
=: I_1 + I_2 + I_3\,.\nonumber
\end{align}
For the first term $I_1$ in \eqref{tec0} we have, using \eqref{lem:Q2}, \eqref{lem:psi3}, and \eqref{eq:est-rem1-2},
\begin{align*}
I_1 \leq C\rho e^{(4a+4)L}\|h_1-h_2\| \int_x^{L\rho} \expay\de y \leq C\rho^2 e^{(5a+4)L}\|h_1-h_2\|\,.
\end{align*}
To obtain a bound on $I_2$, we use Lemma~\ref{lem:Q3}, \eqref{eq:est-rem1-2}, and the uniform bound $Q[h_2](y)\geq\frac12$:
\begin{align*}
I_2 &\leq C\rho e^{(3a+4)L}\|h_1-h_2\| \int_x^{L\rho} e^{\psi_\rho[h_2](y)-\psi_\rho[h_2](x)} \expay Q[h_2](y)\de y
\xupref{eq:integralpsi}{\leq} C\rho^2 e^{(3a+4)L}\|h_1-h_2\|\,.
\end{align*}
Finally we use \eqref{eq:est-rem1-2bis} to estimate $I_3$:
\begin{align*}
I_3 \leq C e^{(a+2)L}\|h_1-h_2\| \int_x^{L\rho} e^{\psi_\rho[h_2](y)-\psi_\rho[h_2](x)} \expay Q[h_2](y) \de y
\xupref{eq:integralpsi}{\leq} C \rho e^{(a+2)L}\|h_1-h_2\| \,.
\end{align*}
Inserting the previous estimates in \eqref{tec0}, and recalling also \eqref{eq:est-rem1-2bis}, we obtain the desired bound \eqref{eq:est-rem3-4bis} in the region $(-L\rho,L\rho)$.

\medskip
\noindent\textit{Case 2: $x \leq -L\rho$.}
In this case we split the integral remainder into two parts:
\begin{align} \label{tec1}
\bigg| \frac{1}{\rho}\int_x^{L\rho}e^{-(\psi_\rho[h](x)-\psi_\rho[h](y))} & \expay Q[h](y)R[h](y)\de y \bigg| \nonumber\\
& \xupref{eq:est-rem1-2}{\leq} C e^{(a+2)L} \int_{-L\rho}^{L\rho}e^{-(\psi_\rho[h](x)-\psi_\rho[h](y))} \expay Q[h](y) \de y \\
& \qquad + C\int_x^{-L\rho}e^{-(\psi_\rho[h](x)-\psi_\rho[h](y))} \expay Q[h](y) \expay e^{\frac{2m}{\rho}(y+L\rho)} \de y\,. \nonumber
\end{align}
For the first term in \eqref{tec1} we have as in the previous step
\begin{align*}
C e^{(a+2)L} \int_{-L\rho}^{L\rho}e^{-(\psi_\rho[h](x)-\psi_\rho[h](y))} \expay Q[h](y) \de y
& \xupref{eq:integralpsi}{=} C\rho e^{(a+2)L}e^{-\psi_\rho[h](x)} \Bigl( e^{\psi_\rho[h](-L\rho)}-1 \Bigr) \\
& \leq C\rho e^{(a+2)L}e^{\frac{2m}{\rho}(x+L\rho)}\,,
\end{align*}
where the last estimate follows from Lemma~\ref{lem:psi}.
For the second term in \eqref{tec1} we have instead, using again the monotonicity property \eqref{lem:psi2},
\begin{align*}
\int_x^{-L\rho}e^{-(\psi_\rho[h](x)-\psi_\rho[h](y))} & Q[h](y) e^{\frac{2m}{\rho}(y+L\rho)} e^{-\frac{2a}{\rho}y} \de y \\
& \xupref{lem:Q2}{\leq} 2 e^{2mL}e^{-\psi_\rho[h](x)} \int_x^{-L\rho} e^{\psi_\rho[h](y)+\frac{2m}{\rho}y}e^{-\frac{2a}{\rho}y}\de y \\
& \leq 2 e^{\frac{2m}{\rho}(x+L\rho)}\int_x^{-L\rho} e^{-\frac{2a}{\rho}y}\de y \\
& \leq C\rho e^{\frac{2m}{\rho}(x+L\rho)}e^{-\frac{2a}{\rho}x}
\leq C\rho e^{mL} e^{\frac{m}{\rho}(x+L\rho)}
\end{align*}
(for the last estimate, recall that $m>2a$).
By plugging the previous inequalities into \eqref{tec1} we obtain the bound \eqref{eq:est-rem3-4} in the region $(-\infty,-L\rho]$.

To prove the estimate \eqref{eq:est-rem3-4bis} in the same region, we split the integral as in \eqref{tec1}:
\begin{align} \label{tec2}
\frac{1}{\rho}\int_x^{L\rho} \Big| e^{\psi_\rho[h_1](y)-\psi_\rho[h_1](x)} Q[h_1](y)R[h_1](y)
- e^{\psi_\rho[h_2](y)-\psi_\rho[h_2](x)} & Q[h_2](y) R[h_2](y)\Big| \expay\de y \nonumber\\
& = J_1 + J_2\,,
\end{align}
where $J_1$ is the integral from $-L\rho$ to $L\rho$ and $J_2$ is the integral over $(x,-L\rho)$.
We first consider the term $J_1$, and we argue as in \eqref{tec0}:
\begin{align*}
J_1
& \leq \frac{1}{\rho}\int_{-L\rho}^{L\rho} \Big| e^{\psi_\rho[h_1](y)-\psi_\rho[h_1](x)} - e^{\psi_\rho[h_2](y)-\psi_\rho[h_2](x)} \Big| Q[h_1](y) |R[h_1](y)| \expay\de y \\
& \qquad + \frac{1}{\rho}\int_{-L\rho}^{L\rho} e^{\psi_\rho[h_2](y)-\psi_\rho[h_2](x)} \big|Q[h_1](y)-Q[h_2](y) \big| |R[h_1](y)| \expay\de y \nonumber \\
& \qquad + \frac{1}{\rho}\int_{-L\rho}^{L\rho} e^{\psi_\rho[h_2](y)-\psi_\rho[h_2](x)} Q[h_2](y) \big| R[h_1](y) - R[h_2](y) \big| \expay\de y \nonumber\\
&=: J_{1,1} + J_{1,2}+ J_{1,3}\,. \nonumber
\end{align*}
For the first integral we have by \eqref{lem:psi3}
\begin{align*}
J_{1,1} & \leq C e^{(2a+2)L}\|h_1-h_2\| e^{-\frac{a}{\rho}x}e^{\frac{2m}{\rho}(x+L\rho)} \int_{-L\rho}^{L\rho}|R[h_1](y)|\expay\de y \\
& \xupref{eq:est-rem1-2}{\leq} C\rho^2 e^{(m+4a+4)L}\|h_1-h_2\| e^{\frac{m}{\rho}(x+L\rho)}\,.
\end{align*}
For the term $J_{1,2}$ we use \eqref{lem:Q4} and \eqref{eq:est-rem1-2}, together with the bound $Q[h_2](y)\geq\frac12$:
\begin{align*}
J_{1,2} &\leq C \rho e^{(3a+4)L}\|h_1-h_2\| \int_{-L\rho}^{L\rho} e^{\psi_\rho[h_2](y)-\psi_\rho[h_2](x)} \expay Q[h_2](y)\de y \\
& \xupref{eq:integralpsi}{\leq} C \rho^2 e^{(3a+4)L}\|h_1-h_2\| e^{-\psi_\rho[h_2](x)}e^{\psi_\rho[h_2](-L\rho)} \\
& \leq C \rho^2 e^{(3a+4)L}\|h_1-h_2\| e^{\frac{2m}{\rho}(x+L\rho)}\,,
\end{align*}
the last inequality following from Lemma~\ref{lem:psi}.
For the term $J_{1,3}$ we use \eqref{eq:est-rem1-2bis}:
\begin{align*}
J_{1,3} &\leq C e^{(a+2)L}\|h_1-h_2\| \int_{-L\rho}^{L\rho} e^{\psi_\rho[h_2](y)-\psi_\rho[h_2](x)} \expay Q[h_2](y) \de y \\
& \xupref{eq:integralpsi}{\leq} C \rho e^{(a+2)L}\|h_1-h_2\| e^{\frac{2m}{\rho}(x+L\rho)}\,.
\end{align*}
This completes the estimate of $J_1$ in \eqref{tec2}.
It remains to consider the term $J_2$, and we proceed similarly:
\begin{align*}
J_2 & \leq \frac{1}{\rho}\int_x^{-L\rho} \Big| e^{\psi_\rho[h_1](y)-\psi_\rho[h_1](x)} - e^{\psi_\rho[h_2](y)-\psi_\rho[h_2](x)} \Big| Q[h_1](y) |R[h_1](y)| \expay\de y \\
& \qquad + \frac{1}{\rho}\int_x^{-L\rho} e^{\psi_\rho[h_2](y)-\psi_\rho[h_2](x)} \big|Q[h_1](y)-Q[h_2](y) \big| |R[h_1](y)| \expay\de y \nonumber \\
& \qquad + \frac{1}{\rho}\int_x^{-L\rho} e^{\psi_\rho[h_2](y)-\psi_\rho[h_2](x)} Q[h_2](y) \big| R[h_1](y) - R[h_2](y) \big| \expay\de y \nonumber\\
&=: J_{2,1} + J_{2,2} + J_{2,3}\,. \nonumber
\end{align*}
For the first integral we have by \eqref{lem:psi3}
\begin{align*}
J_{2,1} & \leq C e^{(2a+2)L}\|h_1-h_2\| e^{\frac{2m-a}{\rho}x} \int_{x}^{-L\rho}|R[h_1](y)|e^{-\frac{2m}{\rho}y}\expay\de y \\
& \xupref{eq:est-rem1-2}{\leq} C\rho e^{(2m+2a+2)L}\|h_1-h_2\| e^{\frac{2m-a}{\rho}x}  \int_{x}^{-L\rho}e^{-\frac{2a}{\rho}y}\de y \\
& \leq C\rho^2 e^{(2m+2a+2)L}\|h_1-h_2\| e^{\frac{2m-3a}{\rho}x}
\leq C\rho^2 e^{(m+2a+2)L}\|h_1-h_2\| e^{\frac{m}{\rho}(x+L\rho)}
\end{align*}
(recall that $m>3a$).
For the term $J_{2,2}$ we use \eqref{lem:Q4} and \eqref{eq:est-rem1-2}:
\begin{align*}
J_{2,2} &\leq C\rho e^{(2a+2)L}\|h_1-h_2\| \int_{x}^{-L\rho} e^{\psi_\rho[h_2](y)-\psi_\rho[h_2](x)} e^{\frac{2m}{\rho}(y+L\rho)}e^{-\frac{2a}{\rho}y}\de y \\
& \leq C\rho e^{(2a+2)L}\|h_1-h_2\| e^{\frac{2m}{\rho}(x+L\rho)} \int_x^{-L\rho} e^{-\frac{2a}{\rho}y}\de y \\
& \leq C\rho^2 e^{(2a+2)L}\|h_1-h_2\| e^{\frac{2m}{\rho}(x+L\rho)} e^{-\frac{2a}{\rho}x}
\leq C\rho^2 e^{(m+2a+2)L}\|h_1-h_2\| e^{\frac{m}{\rho}(x+L\rho)}\,,
\end{align*}
the second inequality following from the monotonicity in Lemma~\ref{lem:psi}, and the last one since $m>2a$.
For the term $J_{2,3}$ we use \eqref{eq:est-rem1-2bis} and we conclude as in the previous estimate:
\begin{align*}
J_{2,3} &\leq C \|h_1-h_2\| \int_{x}^{-L\rho} e^{\psi_\rho[h_2](y)-\psi_\rho[h_2](x)} e^{\frac{2m}{\rho}(y+L\rho)}e^{-\frac{2a}{\rho}y} \de y \\
& \leq C\rho e^{mL}\|h_1-h_2\| e^{\frac{m}{\rho}(x+L\rho)}\,.
\end{align*}
This completes the estimate of $J_2$ in \eqref{tec2}.
By collecting all the previous inequalities and inserting them in \eqref{tec2}, the bound \eqref{eq:est-rem3-4bis} in the region $(-\infty,-L\rho]$ is proved.

\medskip
\noindent\textit{Case 3: $x \geq L\rho$.}
In this case we have to estimate
\begin{align*}
\bigg|\frac{1}{1+\rho}\int_{L\rho}^x e^{\frac{1}{1+\rho}(y-x)}R[h](y)\de y \bigg|
& \xupref{eq:est-rem1-2}{\leq} C(e^{-aL}+\rho e^{aL}) \int_{L\rho}^x e^{\frac{1}{1+\rho}(y-x)}e^{-\frac12(y-L\rho)}\de y \\
& \leq C(e^{-aL}+\rho e^{aL})e^{-\frac12(x-L\rho)}\,,
\end{align*}
which gives the last bound in \eqref{eq:est-rem3-4}. Similarly, \eqref{eq:est-rem3-4bis} follows using \eqref{eq:est-rem1-2bis}.
\end{proof}

We conclude this section by showing that the two arbitrary constants $k_1(h)$, $k_2(h)$ appearing in the definition \eqref{T} of the map $T$ can be chosen so that the two constraints \eqref{constraints} are satisfied by $T[h]$, for any $h\in X_{L,\rho}$, provided $L$ is sufficiently large and, in turn, $\rho$ is small enough. The two constants are uniformly close to 1, see \eqref{eq:K1K2bis}; notice that, though a natural guess for the constant $k_1(h)$ is $\frac{1}{1+\rho}$, according to \eqref{eq:system}, the latter is indeed close to 1.

\begin{lemma}\label{lem:K1K2}
	There exist $L_1\geq L_0$ and a map $\rho_1:(L_1,\infty)\to(0,\rho_0)$ such that for every $L> L_1$ and $\rho\in(0,\rho_1(L))$ the following properties hold.
	For every $h\in X_{L,\rho}$ we can choose $k_1(h)$, $k_2(h)$ in \eqref{T} such that
	\begin{equation} \label{eq:K1K2}
	M(T[h])=\inte T[h](x)\de x =1, \qquad M_\gamma(T[h])=\frac{1}{\rho}\inte \expay T[h](y)\de y = \frac{1}{1+\rho}\,.
	\end{equation}
	Moreover
	\begin{equation} \label{eq:K1K2bis}
	|k_i(h)-1| < \frac12, \qquad |k_i(h_1)-k_i(h_2)| < \frac12 \|h_1-h_2\|, \qquad i=1,2.
	\end{equation}
\end{lemma}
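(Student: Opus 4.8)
The idea is to read the two constraints in \eqref{eq:K1K2} as a $2\times 2$ linear system for the unknowns $k_1(h)$, $k_2(h)$. Since $T[h]$ in \eqref{T} is affine in $k_1(h),k_2(h)$ and $\int_{L\rho}^\infty e^{-\frac{1}{1+\rho}(x-L\rho)}\de x=1+\rho$, the conditions $M(T[h])=1$ and $M_\gamma(T[h])=\frac{1}{1+\rho}$ become
\begin{equation}\label{eq:system}
\begin{pmatrix}\alpha_{11}(h)&1\\[2mm]\alpha_{21}(h)&\alpha_{22}\end{pmatrix}\begin{pmatrix}k_1(h)\\[2mm]k_2(h)\end{pmatrix}=\begin{pmatrix}1-\beta_1(h)\\[2mm]\dfrac{1}{1+\rho}-\beta_2(h)\end{pmatrix},
\end{equation}
where $\alpha_{11}(h)=\int_{-\infty}^{L\rho}e^{-\psi_\rho[h](x)}\de x$, $\alpha_{21}(h)=\frac1\rho\int_{-\infty}^{L\rho}\expay e^{-\psi_\rho[h](y)}\de y$, $\alpha_{22}=\frac{1}{\rho(1+\rho)}\int_{L\rho}^\infty\expay e^{-\frac{1}{1+\rho}(y-L\rho)}\de y$, and the data $\beta_1(h)=\int_{-\infty}^\infty\widetilde R[h]$, $\beta_2(h)=\frac1\rho\int_{-\infty}^\infty\expay\widetilde R[h](y)\de y$ collect the contributions of the remainder $\widetilde R[h]$ from \eqref{eq:remainder3}--\eqref{eq:remainder4}. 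Denote by $\mathcal A(h)$ the coefficient matrix in \eqref{eq:system}. The whole point is to show that $\mathcal A(h)$ is close to $\bigl(\begin{smallmatrix}0&1\\1&0\end{smallmatrix}\bigr)$ and the right-hand side close to $(1,\tfrac{1}{1+\rho})$, uniformly in $h\in X_{L,\rho}$.

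First I would estimate the four entries of $\mathcal A(h)$. The entry $\alpha_{22}$ does not depend on $h$ and satisfies $\alpha_{22}=O(e^{-aL})$ (since $\expay\le e^{-aL}$ on $[L\rho,\infty)$ and the remaining integral is $O(\rho)$). For $\alpha_{11}(h)$ one bounds $e^{-\psi_\rho[h]}\le1$ on $(-L\rho,L\rho)$ and uses the monotonicity of $x\mapsto e^{-\psi_\rho[h](x)-\frac{2m}{\rho}x}$ from \eqref{lem:psi2} (together with $\psi_\rho[h](-L\rho)\ge0$) on $(-\infty,-L\rho)$, obtaining $\alpha_{11}(h)=O(L\rho)$. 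The crucial entry is $\alpha_{21}(h)$: integrating the identity \eqref{eq:derivativepsi} gives $\frac1\rho\int_{-\infty}^{L\rho}\expay Q[h](y)e^{-\psi_\rho[h](y)}\de y=e^{-\psi_\rho[h](L\rho)}-e^{-\psi_\rho[h](-\infty)}=1$ (recall $\psi_\rho[h](L\rho)=0$ and $\psi_\rho[h](x)\to+\infty$ as $x\to-\infty$), whence $|\alpha_{21}(h)-1|\le\frac1\rho\int_{-\infty}^{L\rho}\expay|1-Q[h](y)|e^{-\psi_\rho[h](y)}\de y$, which is small by the quantitative bounds on $|Q[h]-1|$ obtained in the proof of Lemma~\ref{lem:Q}. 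Consequently $\det\mathcal A(h)=\alpha_{11}(h)\alpha_{22}-\alpha_{21}(h)=-1+O(L\rho)+O(e^{-aL})+\dots$, so $\mathcal A(h)$ is invertible with $\|\mathcal A(h)^{-1}\|\le C$ once $L$ is large and $\rho$ small.

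Next I would bound $\beta_1(h)$ and $\beta_2(h)$ by integrating the pointwise estimates \eqref{eq:est-rem3-4} of Lemma~\ref{lem:remainder3-4} against $1$ and against $\expay$ respectively, splitting $\R$ into $(-\infty,-L\rho]$, $(-L\rho,L\rho)$, $[L\rho,\infty)$; each resulting term is of the form $O(e^{-aL})$ or $O(\rho\,P(e^L))$ for a polynomial $P$ depending only on $m,a$, hence as small as desired after fixing $L$ large and then $\rho$ small (consistently with \eqref{parameters2bis}). Solving \eqref{eq:system} by Cramer's rule then gives $k_1(h)=\frac{1}{1+\rho}+O(\text{small})$ — which recovers the heuristic guess $k_1(h)\approx\frac{1}{1+\rho}$ — and $k_2(h)=\alpha_{21}(h)+O(\text{small})=1+O(\text{small})$, so the first inequality in \eqref{eq:K1K2bis} holds. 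For the Lipschitz estimate one observes that $\alpha_{11}(h),\alpha_{21}(h)$ are Lipschitz in $h$ with small constant via \eqref{lem:psi3} taken with $y=L\rho$ (using $\psi_\rho[h](L\rho)=0$), and $\beta_1(h),\beta_2(h)$ are Lipschitz with small constant by \eqref{eq:est-rem3-4bis}; since $(k_1(h),k_2(h))$ depends smoothly on these quantities with derivatives bounded uniformly (as $|\det\mathcal A(h)|\ge\tfrac12$ and all entries stay in a fixed bounded range), one gets $|k_i(h_1)-k_i(h_2)|\le C\cdot(\text{small})\cdot\|h_1-h_2\|<\tfrac12\|h_1-h_2\|$.

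The main obstacle I anticipate is not analytic but bookkeeping: one must track the various exponential-in-$L$ prefactors entering through $\psi_\rho[h]$, $Q[h]$ and $\widetilde R[h]$, and verify that \emph{every} error term and Lipschitz constant is made $<\tfrac12$ by the two-step choice (first $L$ large, so that the $e^{-aL}$-type contributions are small, then $\rho$ small depending on $L$, so that the $\rho\,P(e^L)$-type contributions are small), all while the standing constraint \eqref{parameters2bis} remains in force. Beyond this, no new estimate is needed apart from those already collected in Lemmas~\ref{lem:Q}--\ref{lem:remainder3-4}.
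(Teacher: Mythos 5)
Your proposal is correct and follows essentially the same route as the paper: both reduce \eqref{eq:K1K2} to a $2\times2$ linear system for $(k_1,k_2)$ whose coefficient matrix (yours is the paper's with the rows swapped, hence close to the antidiagonal permutation rather than to the identity) and right-hand side are controlled uniformly in $h$ via Lemmas~\ref{lem:Q}--\ref{lem:remainder3-4}, with the Lipschitz bounds coming from \eqref{lem:psi3} (at $y=L\rho$) and \eqref{eq:est-rem3-4bis} and the same two-step choice of $L$ large, then $\rho$ small. The only (harmless) variation is that you show the key coefficient $\alpha_{21}(h)=\frac1\rho\int_{-\infty}^{L\rho}e^{-\frac a\rho y}e^{-\psi_\rho[h](y)}\de y$ is close to $1$ directly from the exact identity \eqref{eq:integralpsi} combined with the smallness of $|Q[h]-1|$, whereas the paper compares $e^{-\psi_\rho[h](y)}$ with the explicit exponential $e^{\frac1a(e^{-aL}-e^{-\frac a\rho y})}$; both rest on the same estimates from Lemma~\ref{lem:Q}.
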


\begin{proof}
We have by the definition \eqref{T} of the map $T$
\begin{align*}
	M_\gamma(T[h]) & = \frac{k_1(h)}{\rho}\int_{-\infty}^{L\rho}\expay e^{-\psi_\rho[h](y)}\de y + \frac{e^{-aL}k_2(h)}{(1+\rho)a+\rho} + \frac{1}{\rho}\inte \expay\widetilde{R}[h](y)\de y \\
	& =: (1+\lambda_1)k_1(h) + \frac{e^{-aL}}{(1+\rho)a+\rho}k_2(h) + \nu_1\,,
\end{align*}
\begin{align*}
M(T[h]) &= k_1(h)\int_{-\infty}^{L\rho}e^{-\psi_\rho[h](x)}\de x + k_2(h) + \inte \widetilde{R}[h](x)\de x \\
&=: \lambda_2 k_1(h) + k_2(h) + \nu_2\,.
\end{align*}
We hence have to show that for every $h\in X_{L,\rho}$ we can find a solution $(k_1,k_2)$ to the linear system
\begin{align} \label{eq:system}
\begin{cases}
(1+\lambda_1)k_1 + \frac{e^{-aL}}{(1+\rho)a+\rho}k_2 = \frac{1}{1+\rho} - \nu_1 \\
\lambda_2 k_1 + k_2 = 1-\nu_2
\end{cases}
\end{align}
satisfying in addition the conditions \eqref{eq:K1K2bis}.
This will be achieved by showing that the matrix of the system
\begin{equation*}
A :=
\left(
\begin{matrix}
1+\lambda_1 &  \frac{e^{-aL}}{(1+\rho)a+\rho} \\
\lambda_2 & 1
\end{matrix}
\right)
\end{equation*}
is uniformly close to the identity matrix, and that $\nu_1$, $\nu_2$ are uniformly close to 0, with in addition uniform estimates on the Lipschitz continuity of the coefficients (with respect to $h$), provided $L$ is large enough and, in turn, $\rho$ is small enough (depending on $L$).

We hence start by estimating the coefficient $\lambda_1$. For $y\in(-\infty,L\rho)$, first observe that, by setting
$$
t_1 = -\psi_\rho[h](y) = -\frac{1}{\rho}\int_{y}^{L\rho}\expaz Q[h](z)\de z\,,
\qquad
t_2 = \frac{1}{a}(e^{-aL}-e^{-\frac{a}{\rho}y})=-\frac{1}{\rho}\int_{y}^{L\rho}e^{-\frac{a}{\rho}z}\de z\,,
$$
one has using Lemma~\ref{lem:Q}
$$
\max\{e^{t_1},e^{t_2}\} \leq e^{-\frac{1}{2}\psi_\rho[h](y)}\,.
$$
Then by the elementary estimate $|e^{t_1}-e^{t_2}|\leq \max\{e^{t_1},e^{t_2}\}|t_1-t_2|$ it follows that
\begin{align*}
\Big|e^{-\psi_\rho[h](y)}- e^{\frac{1}{a}(e^{-aL}-e^{-\frac{a}{\rho}y})}\Big|
& \leq e^{-\frac{1}{2}\psi_\rho[h](y)} \bigg|\frac{1}{\rho}\int_{y}^{L\rho}\expaz \bigl(Q[h](z)-1\bigr)\de z\bigg| \\
& \upupref{lem:Q2bis}{lem:Q2ter}{\leq} Ce^{(2a+2)L} e^{-\frac{1}{2}\psi_\rho[h](y)} \int_{y}^{L\rho}\expaz \de z \\
& \leq C \rho e^{(2a+2)L} e^{-\frac{1}{2}\psi_\rho[h](y)} \expay\,.
\end{align*}
By using this estimate we obtain
\begin{align} \label{eq:proofKK1}
|\lambda_1| & = \bigg| \frac{1}{\rho}\int_{-\infty}^{L\rho}\expay e^{-\psi_\rho[h](y)}\de y -1 \bigg| \nonumber \\
& = \bigg| \frac{1}{\rho}\int_{-\infty}^{L\rho}\expay \Bigl(e^{-\psi_\rho[h](y)}- e^{\frac{1}{a}(e^{-aL}-e^{-\frac{a}{\rho}y})}\Bigr)\de y \bigg| \nonumber\\
& \leq Ce^{(2a+2)L} \biggl[\int_{-\infty}^{-L\rho} e^{-\frac{1}{2}\psi_\rho[h](y)} e^{-\frac{2a}{\rho}y}\de y + \int_{-L\rho}^{L\rho} e^{-\frac{1}{2}\psi_\rho[h](y)} e^{-\frac{2a}{\rho}y}\de y\biggr] \nonumber\\
& \xupref{lem:psi2}{\leq} Ce^{(2a+2)L} \biggl[\int_{-\infty}^{-L\rho} e^{\frac{m}{\rho}(y+L\rho)} e^{-\frac{2a}{\rho}y}\de y + \int_{-L\rho}^{L\rho} e^{-\frac{2a}{\rho}y}\de y\biggr]
\leq C\rho e^{(4a+2)L}\,.
\end{align}
We turn to the estimate for $\lambda_2$: we have by Lemma~\ref{lem:psi}
\begin{align} \label{eq:proofKK2}
|\lambda_2| & = \int_{-\infty}^{L\rho} e^{-\psi_\rho[h](x)}\de x
\leq \int_{-\infty}^{-L\rho}e^{-\psi_\rho[h](-L\rho)}e^{\frac{2m}{\rho}(x+L\rho)}\de x + \int_{-L\rho}^{L\rho}\de x
\leq C\rho + 2L\rho\,. 
\end{align}
We can easily obtain bounds on $\nu_1$ and $\nu_2$ by using \eqref{eq:est-rem3-4}:
\begin{align} \label{eq:proofKK3}
|\nu_1| 
&\leq  \frac{1}{\rho}\inte \expay \big| \widetilde{R}[h](y)\big| \de y
\leq C e^{(m+a+2)L} \int_{-\infty}^{-L\rho} e^{\frac{m}{\rho}(y+L\rho)}\expay\de y \nonumber \\
& \qquad + C e^{(a+2)L}\int_{-L\rho}^{L\rho}\expay\de y
+ C\bigl(e^{-aL}+\rho e^{aL}\bigr)\frac{1}{\rho}\int_{L\rho}^\infty e^{-\frac12(y-L\rho)}\expay\de y \nonumber \\
& \leq C\bigl( e^{-aL} + \rho e^{(m+2a+2)L} \bigr)\,.
\end{align}
Similarly,
\begin{align} \label{eq:proofKK4}
|\nu_2| \leq 
\inte \big| \widetilde{R}[h](x)\big| \de x
& \leq C\rho e^{(m+a+2)L}\int_{-\infty}^{-L\rho} e^{\frac{m}{\rho}(x+L\rho)}\de x
+ C\rho e^{(a+2)L}\int_{-L\rho}^{L\rho}\de x \nonumber \\
&\qquad + C\bigl(e^{-aL}+\rho e^{aL}\bigr)\int_{L\rho}^\infty e^{-\frac12(x-L\rho)}\de x \nonumber \\
& \leq C\bigl( e^{-aL} + \rho e^{(m+a+2)L} \bigr)\,.
\end{align}

From the previous estimates it is clear that the quantities $|A-\mathrm{Id}|$, $|\nu_1|$, $|\nu_2|$ can be made arbitrary small, for every $L$ large enough and for every $\rho\in(0,\rho(L))$, where $\rho(L)$ is a sufficiently small value depending on $L$. Hence the linear system \eqref{eq:system} has a unique solution $(k_1,k_2)$ satisfying the first condition in \eqref{eq:K1K2bis}.

To obtain also the second condition in \eqref{eq:K1K2bis}, it is sufficient to show that the Lipschitz constant of the coefficients $\lambda_i$, $\nu_i$ can be made arbitrarily small. We hence write explicitly the dependence of the coefficients on the function $h\in X_{L,\rho}$.
By \eqref{lem:psi3} (with $y=L\rho$) we have
\begin{align*}
|\lambda_1(h_1)-\lambda_1(h_2)| &\leq \frac{1}{\rho}\int_{-\infty}^{L\rho} e^{-\frac{a}{\rho}x}\big| e^{-\psi_\rho[h_1](x)}-e^{-\psi_\rho[h_2](x)}\big|\de x \\
& \leq C e^{(3a+2)L}\|h_1-h_2\| \biggl( \int_{-\infty}^{-L\rho} e^{\frac{2m}{\rho}(x+L\rho)}e^{-\frac{2a}{\rho}x}\de x + \int_{-L\rho}^{L\rho} e^{-\frac{a}{\rho}x}\de x \biggr) \\
& \leq C \rho e^{(5a+2)L}\|h_1-h_2\|\,.
\end{align*}
Similarly,
\begin{align*}
|\lambda_2(h_1)-\lambda_2(h_2)|
\leq \int_{-\infty}^{L\rho} \big| e^{-\psi_\rho[h_1](x)}-e^{-\psi_\rho[h_2](x)}\big|\de x
\leq C \rho e^{(4a+2)L}\|h_1-h_2\|\,.
\end{align*}
To obtain similar estimates for $\nu_1$, $\nu_2$, we argue as in \eqref{eq:proofKK3}--\eqref{eq:proofKK4}, using this time \eqref{eq:est-rem3-4bis}:
\begin{align*}
|\nu_1(h_1)-\nu_1(h_2)| \leq \frac{1}{\rho}\inte\expay \big|\widetilde{R}[h_1](y)-\widetilde{R}[h_2](y)\big|\de y \leq C\bigl(e^{-aL}+\rho e^{(m+5a+4)L}\bigr)\|h_1-h_2\|\,,
\end{align*}
\begin{align*}
|\nu_2(h_1)-\nu_2(h_2)| \leq \inte \big|\widetilde{R}[h_1](y)-\widetilde{R}[h_2](y)\big|\de y \leq C\bigl(e^{-aL}+\rho e^{(m+5a+4)L}\bigr)\|h_1-h_2\|\,.
\end{align*}
It follows from the previous estimates that the Lipschitz constants of the coefficients $\lambda_i$, $\nu_i$ can be made arbitrarily small by possibly taking a larger $L$ and, in turn, a smaller $\rho(L)$. The conclusion of the lemma follows.
\end{proof}


\section{Asymptotic decay of the solution at \texorpdfstring{$-\infty$}{- infinity}} \label{sect:decay-}

The fixed point $h$ obtained in Theorem~\ref{thm:fixedpoint} is by construction a solution to \eqref{equation} satisfying the constraints \eqref{constraints} and the decay estimate
$$
|h(x)| \leq 2e^{\omega(x)}\,,
$$
where the exponent $\omega$ is defined in \eqref{parameters3}.
In this section we refine the previous bound in the region $\{x< L\rho\}$, and in particular we determine the exact asymptotics of the solution as $x\to-\infty$, by showing that
\begin{equation} \label{eq:asimp-}
h(x) \sim e^{-\psi_\rho(x)} \qquad\text{as }x\to-\infty,
\end{equation}
where $\psi_\rho$ is defined in \eqref{psi}.
Since $h$ is now a fixed function, to simplify the notation we omit the dependence on $h$ in the functions $Q$ and $\psi_\rho$ defined in \eqref{Q} and \eqref{psi} respectively. As before, $C$ will always denote a generic constant depending only on the kernel $K$ and on $m$, but not on $L$ and $\rho$, which may change from line to line. The main result of this section is the following.

\begin{theorem}[Decay at $-\infty$] \label{thm:decay-}
	Let $h\in X_{L,\rho}$ be the solution to \eqref{equation}--\eqref{constraints} determined in Theorem~\ref{thm:fixedpoint}, for $L>\bar{L}$ and $\rho\in(0,\bar{\rho}(L))$. Then, by possibly choosing a smaller $\bar{\rho}(L)$,
	\begin{equation} \label{eq:decay-}
	\textstyle\frac14 e^{-\psi_\rho(x)} \leq h(x) \leq 2e^{-\psi_\rho(x)} \qquad\text{for all }x<L\rho\,.
	\end{equation}
\end{theorem}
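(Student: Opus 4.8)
The plan is to build everything on the explicit solution formula \eqref{T} of the approximate equation. Since the fixed point satisfies $h=T[h]$, for $x<L\rho$ the first line of \eqref{T} reads
\[
h(x)=k_1(h)\,e^{-\psi_\rho(x)}+\widetilde R[h](x),\qquad |k_1(h)-1|<\tfrac12,
\]
the bound on $k_1(h)$ being part of Lemma~\ref{lem:K1K2}. Because $\psi_\rho\ge0$ on $(-\infty,L\rho)$, both inequalities in \eqref{eq:decay-} (and in particular $h>0$ there) follow at once as soon as we prove
$|\widetilde R[h](x)|\le\tfrac14 e^{-\psi_\rho(x)}$ for all $x<L\rho$: indeed then $h\le(\tfrac32+\tfrac14)e^{-\psi_\rho}\le 2e^{-\psi_\rho}$ and $h\ge(\tfrac12-\tfrac14)e^{-\psi_\rho}=\tfrac14 e^{-\psi_\rho}$. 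Since we are allowed to shrink $\bar\rho(L)$, it suffices to obtain $|\widetilde R[h](x)|\le\varepsilon_\rho\,e^{-\psi_\rho(x)}$ with $\varepsilon_\rho\to0$ as $\rho\to0$ for fixed $L$.

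Next I would reduce this to a \emph{super-exponential} estimate on the plain remainder $R[h]$. From \eqref{eq:remainder3}, writing $e^{-(\psi_\rho(x)-\psi_\rho(y))}=e^{-\psi_\rho(x)}e^{\psi_\rho(y)}$,
\[
|\widetilde R[h](x)|\le |R[h](x)|+e^{-\psi_\rho(x)}\,\frac1\rho\int_x^{L\rho}e^{\psi_\rho(y)}\expay Q[h](y)\,|R[h](y)|\,\de y .
\]
If one can show $|R[h](y)|\le\varepsilon_\rho\,e^{-\frac32\psi_\rho(y)}$ for every $y<L\rho$, then inside the integral $e^{\psi_\rho(y)}e^{-\frac32\psi_\rho(y)}=e^{-\frac12\psi_\rho(y)}$, and using \eqref{eq:derivativepsi} in the form $\frac1\rho\expay Q[h](y)\,e^{-\frac12\psi_\rho(y)}=2\,\frac{\de}{\de y}\!\left(e^{-\frac12\psi_\rho(y)}\right)$ one gets $\frac1\rho\int_x^{L\rho}\expay Q[h](y)e^{-\frac12\psi_\rho(y)}\de y=2\bigl(1-e^{-\frac12\psi_\rho(x)}\bigr)\le2$, hence $|\widetilde R[h](x)|\le 3\varepsilon_\rho\,e^{-\psi_\rho(x)}$. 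Note the exponent $\tfrac32$ (not merely $1$) is essential here: with only $|R[h](y)|\lesssim e^{-\psi_\rho(y)}$ the same integral would equal $\psi_\rho(x)e^{-\psi_\rho(x)}$, which is $O(1)$ and not $O(e^{-\psi_\rho(x)})$.

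The heart of the matter is then the bound $|R[h](x)|\le\varepsilon_\rho\,e^{-\frac32\psi_\rho(x)}$, and this rests on the observation that the domain of integration in \eqref{eq:remainder1} is exactly $\{(y,z):e^{y/\rho}+e^{z/\rho}<e^{x/\rho}\}\subset\{y<x\}\cap\{z<x\}$. By the monotonicity of $\psi_\rho$ (Lemma~\ref{lem:psi}) one has $\psi_\rho(y),\psi_\rho(z)\ge\psi_\rho(x)$, so if $|h(w)|\le\beta e^{-\psi_\rho(w)}$ holds for all $w<L\rho$ then
\[
|h(y)h(z)|\le\beta^2 e^{-\psi_\rho(y)-\psi_\rho(z)}\le\beta^2\,e^{-\psi_\rho(x)}\,e^{-\frac12\psi_\rho(y)}e^{-\frac12\psi_\rho(z)} .
\]
Splitting the domain into $\{z\le y\}$ and $\{y\le z\}$ and using \eqref{kernel4} (resp. \eqref{kernel3}) to bound $\expaz K(\expyz,1)$ by $(1+K_0)e^{y/\rho}e^{-(a+1)z/\rho}$ (resp. $(1+K_0)\expay$), the retained weights $e^{-\frac12\psi_\rho(y)}e^{-\frac12\psi_\rho(z)}$ are precisely what makes the resulting iterated integrals convergent — because $\psi_\rho(w)\sim\tfrac1a e^{-aw/\rho}$ outgrows any $e^{cw/\rho}$ as $w\to-\infty$ — and carrying out the integrals explicitly (again via $(e^{-c\psi_\rho})'=\tfrac c\rho\expay Q[h]\,e^{-c\psi_\rho}$) gives $|R[h](x)|\le C\beta^2\rho\,e^{aL}\,e^{ax/\rho}e^{-2\psi_\rho(x)}\le C\beta^2\rho\,e^{2aL}\,e^{-\frac32\psi_\rho(x)}$, the last step using $e^{ax/\rho}\le e^{aL}\le e^{aL}e^{\frac12\psi_\rho(x)}$ on $(-\infty,L\rho)$. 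Thus $\varepsilon_\rho=C\beta^2\rho e^{2aL}\to0$, provided we already know a bound $|h|\le\beta e^{-\psi_\rho}$.

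Producing that bound is the step I expect to be the main obstacle, because a priori we only have the much weaker $|h|\le 2e^{\omega}$, and $e^{\omega(x)}$ dominates $e^{-\psi_\rho(x)}$ as $x\to-\infty$. I would obtain it by a bootstrap: inserting the current bound $|h|\le G_n$ into \eqref{eq:remainder1}--\eqref{eq:remainder3} and then into the representation above yields an improved bound $|h|\le G_{n+1}$. Starting from $G_0=2e^\omega$, Lemmas~\ref{lem:remainder1-2}--\ref{lem:remainder3-4} already give $G_1=\tfrac32 e^{-\psi_\rho}+O\!\bigl(\rho e^{CL}e^{\omega}\bigr)$; the non-$e^{-\psi_\rho}$ ``defect'' part gets, at each step, essentially squared while its exponential decay rate strictly increases, whereas $e^{-\psi_\rho}$ is a fixed attractor, so after finitely many steps the defect drops below $e^{-\psi_\rho}$ uniformly on $(-\infty,L\rho)$ — for $L$ large and $\rho$ small enough, possibly extremely small compared with $e^{aL}$ — giving $|h|\le\beta e^{-\psi_\rho}$ with, say, $\beta=2$. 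One last application of the core estimate then yields $|\widetilde R[h]|\le 3\varepsilon_\rho e^{-\psi_\rho}$, and choosing $\rho$ so that $3\varepsilon_\rho\le\tfrac14$ finishes the proof. The delicate points in the bootstrap are quantitatively balancing the growth of constants against the improvement of decay rates, and matching the estimates across the transition layer near $x=-L\rho$ (where $e^{-\psi_\rho}$ is already exponentially small in $e^{aL}$) with those in the bulk $(-L\rho,L\rho)$ and in the far region $x\to-\infty$.
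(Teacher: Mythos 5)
Your strategy is in essence the paper's own: write $h=k_1(h)e^{-\psi_\rho}+\widetilde R[h]$ on $\{x<L\rho\}$ with $k_1\in(\tfrac12,\tfrac32)$ from Lemma~\ref{lem:K1K2}, reduce \eqref{eq:decay-} to the smallness of $\widetilde R[h]$ relative to $e^{-\psi_\rho}$, control $\widetilde R[h]$ through $R[h]$ by means of the identity $\frac{\de}{\de y}e^{-c\psi_\rho(y)}=\frac{c}{\rho}e^{-\frac{a}{\rho}y}Q(y)e^{-c\psi_\rho(y)}$, and exploit the quadratic structure of $R$ in a bootstrap. Your observation that an exponent strictly larger than $1$ is needed in the bound for $R[h]$ is correct (the paper's Lemma~\ref{lem:decay-} produces $e^{-2\psi_\rho}$ at that stage), and your ``core estimate'' under the pure hypothesis $|h|\le\beta e^{-\psi_\rho}$ is exactly the $\sigma=0$ case of that lemma; those computations are sound.

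The gap is in the termination of the bootstrap. You claim that after finitely many steps the defect (the non-$e^{-\psi_\rho}$ part of the bound on $h$) drops below $e^{-\psi_\rho}$ \emph{uniformly} on $(-\infty,L\rho)$, so that the clean core estimate with hypothesis $|h|\le 2e^{-\psi_\rho}$ can then be applied once to finish. This cannot happen: every defect your iteration produces has at best a finite exponential decay rate in $x/\rho$ (of the type $\sigma_n e^{\omega(x)}$, or $e^{\frac{m_n}{\rho}(x+L\rho)}$ even with $m_n$ increasing), whereas $e^{-\psi_\rho(x)}$ decays double-exponentially, like $\exp\bigl(-\tfrac{1}{2a}e^{-\frac{a}{\rho}x}\bigr)$, as $x\to-\infty$; hence for every fixed $n$ the ratio of $e^{-\psi_\rho(x)}$ to the defect tends to $0$ as $x\to-\infty$ and uniform domination is impossible. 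Consequently the pure hypothesis $|h|\le\beta e^{-\psi_\rho}$ is never available at any finite stage, and your intermediate target $|R[h](y)|\le\varepsilon_\rho e^{-\frac32\psi_\rho(y)}$ (with no defect term) is likewise unreachable. What is actually needed --- and what the paper's Lemma~\ref{lem:decay-} supplies --- is the \emph{mixed} version of your estimate: assuming $|h|\le 2e^{-\psi_\rho}+\sigma e^{\omega}$ one proves $|\widetilde R[h]|\le \overline C(L)\rho\,(e^{-\psi_\rho}+\sigma^2 e^{\omega})$, the cross terms $e^{-\psi_\rho}\cdot\sigma e^{\omega}$ being absorbed via Young's inequality together with $e^{\frac{a}{\rho}x}\le e^{aL}$ and $e^{\omega(x)}e^{-\frac{a}{\rho}x}\le e^{aL}$. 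The defect (with coefficient $\sigma^{2^n}$, same profile $e^{\omega}$) must then be carried through \emph{every} step, and the conclusion $|\widetilde R[h](x)|\le\tfrac14 e^{-\psi_\rho(x)}$ is obtained by letting $n\to\infty$ pointwise at each fixed $x$, not by a final one-shot application of the pure-hypothesis estimate. So the squaring mechanism you describe is the right one, but the finite-step uniform absorption is false, and without the mixed estimate plus the pointwise limiting argument the proof does not close.
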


\begin{remark}\label{rm:decay-}
	Observe that the bounds \eqref{eq:decay-} involve the function $\psi_\rho$, depending on the solution $h$ itself. However, we can also characterize the decay of $h$ at $-\infty$ by explicit functions: recalling that, by Lemma~\ref{lem:Q}, one has $|Q[h]-1|\leq\frac12$, it easily follows from the definition of $\psi_\rho$ that
	\begin{equation*}
	\frac14 e^{\frac{2}{a}(e^{-aL}-e^{-\frac{a}{\rho}x})} \leq h(x) \leq 2e^{\frac{1}{2a}(e^{-aL}-e^{-\frac{a}{\rho}x})} \qquad\text{for all } x<L\rho\,.
	\end{equation*}
	In fact, as $Q[h]$ converges uniformly to the constant 1 as $\rho\to0$ by \eqref{lem:Q2bis}--\eqref{lem:Q2ter}, the decay of $h$ at $-\infty$ gets arbitrarily close to that of the function $e^{\frac{1}{a}(e^{-aL}-e^{-\frac{a}{\rho}x})}$.
\end{remark}

\begin{proof}[Proof of Theorem~\ref{thm:decay-}]
The construction in Section~\ref{sect:fixedpoint} show that $h$ obeys
\begin{equation}\label{eq:proof-1}
h(x) = k_1 e^{-\psi_\rho(x)} + \widetilde{R}[h](x) \qquad\text{for }x<L\rho,
\end{equation}
where the remainder $\widetilde{R}$ is defined in \eqref{eq:remainder3}, and $k_1\in(\frac12,\frac32)$ by Lemma~\ref{lem:K1K2}.
By inserting the bound \eqref{eq:est-rem3-4} in \eqref{eq:proof-1}, we have
\begin{align*}
|h(x)| \leq k_1 e^{-\psi_\rho(x)} +
\begin{cases}
C\rho e^{(m+a+2)L}e^{\frac{m}{\rho}(x+L\rho)} & \text{for }x\leq -L\rho, \\
C\rho e^{(a+2)L} & \text{for }-L\rho < x < L\rho.
\end{cases}
\end{align*}
Hence, by possibly choosing a smaller $\bar{\rho}(L)$, we obtain the bound
\begin{equation*}
\textstyle |h(x)| \leq 2 e^{-\psi_\rho(x)} + \sigma e^{\omega(x)} \qquad\text{for }x<L\rho,
\end{equation*}
for some universal constant $\sigma\in(0,1)$, where $\omega$ is defined in \eqref{parameters3}.
An application of Lemma~\ref{lem:decay-} below yields that $h$ satisfies \eqref{eq:claim-}.
In turn, by reducing one more time the constant $\bar{\rho}(L)$ so that $\overline{C}(L)\bar{\rho}(L)<\frac14$, we obtain by combining \eqref{eq:proof-1} and \eqref{eq:claim-}
\begin{equation*}
\textstyle |h(x)| \leq 2e^{-\psi_\rho(x)} + \sigma^2 e^{\omega(x)} \qquad\text{for }x<L\rho.
\end{equation*}

We can then iterate the previous argument, applying Lemma~\ref{lem:decay-} with $\sigma$ replaced by $\sigma^2$: after $n$ step we eventually end up with the inequality
\begin{equation*}
|\widetilde{R}[h](x)| \leq \frac14 e^{-\psi_\rho(x)} + \sigma^{2^n} e^{\omega(x)} \qquad\text{for }x<L\rho.
\end{equation*}
The conclusion now follows by using this estimate in \eqref{eq:proof-1} and letting $n\to\infty$.
\end{proof}

\begin{lemma}\label{lem:decay-}
Assume that $h$ satisfies the inequality
\begin{equation} \label{eq:assumpt-}
\textstyle |h(x)| \leq 2e^{-\psi_\rho(x)} + \sigma e^{\omega(x)} \qquad\text{for }x<L\rho,
\end{equation}
for a constant $\sigma\in(0,1)$. Then
\begin{equation} \label{eq:claim-}
|\widetilde{R}[h](x)| \leq \overline{C}(L)\rho \Bigl( e^{-\psi_\rho(x)} + \sigma^2 e^{\omega(x)} \Bigr) \qquad\text{for }x<L\rho\,,
\end{equation}
for a constant $\overline{C}(L)$ depending on $L$, but not on $\rho$ and $\sigma$.
\end{lemma}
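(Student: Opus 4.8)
The plan is to rerun the proofs of Lemma~\ref{lem:remainder1-2} and Lemma~\ref{lem:remainder3-4} in the region $x<L\rho$, but feeding in the sharper pointwise bound \eqref{eq:assumpt-} in place of the crude bound $|h|\le 2e^{\omega}$ used there. Since, by \eqref{eq:remainder3}, $\widetilde R[h](x)=R[h](x)-\frac1\rho\int_x^{L\rho}e^{-(\psi_\rho(x)-\psi_\rho(y))}\expay Q[h](y)R[h](y)\de y$ with $R[h]$ in this region given by the bilinear integral \eqref{eq:remainder1}, and the weight multiplying $R[h](y)$ is nonnegative and explicitly integrable via \eqref{eq:integralpsi}, everything reduces to estimating
\[
\frac1\rho\int_{-\infty}^x\de y\int_{-\infty}^{x+\rho\ln(1-e^{(y-x)/\rho})}\expaz K(\expyz,1)\,|h(y)|\,|h(z)|\,\de z
\]
and the analogous double integral appearing inside $\widetilde R[h]$.

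First I would insert \eqref{eq:assumpt-} into the two factors $|h(y)|,|h(z)|$ and expand the product into the four contributions proportional to $e^{-\psi_\rho(y)-\psi_\rho(z)}$, $\sigma e^{-\psi_\rho(y)}e^{\omega(z)}$, $\sigma e^{\omega(y)}e^{-\psi_\rho(z)}$ and $\sigma^2 e^{\omega(y)+\omega(z)}$, estimating each separately. The pure $\omega\omega$ contribution is exactly $\sigma^2$ times the quantity already controlled in Cases~1--2 of the proofs of Lemma~\ref{lem:remainder1-2} and Lemma~\ref{lem:remainder3-4}; the resulting bounds $C\rho\sigma^2 e^{-\frac a\rho x}e^{\frac{2m}\rho(x+L\rho)}$ for $x\le-L\rho$ and $C\rho\sigma^2 e^{(a+2)L}$ for $-L\rho<x<L\rho$ are, using $m>a$, both of the form $\overline C(L)\rho\,\sigma^2 e^{\omega(x)}$, which is the $\sigma^2$-term in \eqref{eq:claim-}. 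For the remaining three contributions I would use that $\psi_\rho$ is monotone non-increasing (by \eqref{eq:derivativepsi}, since $Q[h]>0$), so that $e^{-\psi_\rho(\xi)}\le e^{-\psi_\rho(x)}$ whenever $\xi<x$: in each of these three pieces at least one factor $e^{-\psi_\rho}$ may therefore be replaced by $e^{-\psi_\rho(x)}$ and pulled out of the integral, leaving an integral of the kernel against $e^{-\psi_\rho}$ or $e^{\omega}$ in the remaining variable. Keeping the genuine $e^{-\psi_\rho(z)}$ weight for the $z$-integration (its double-exponential decay as $z\to-\infty$ absorbs the exponential growth of $\expaz K(\expyz,1)$ coming from \eqref{kernel3}--\eqref{kernel4} and localizes the mass near the upper endpoint), and using the monotonicity estimates of Lemma~\ref{lem:psi} for the outer $y$-integral, one obtains a bound $\overline C(L)\rho\,e^{-\psi_\rho(x)}$ for the $\psi\psi$ piece and $\overline C(L)\rho\,\sigma\,e^{-\psi_\rho(x)}$ for the two mixed pieces; since $\sigma<1$, all three are absorbed into the $e^{-\psi_\rho(x)}$-term of \eqref{eq:claim-}. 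The integral term inside $\widetilde R[h]$ is treated by the same four-fold splitting, now carrying $R[h](y)$ in place of $h(y)h(z)$ and using the explicit primitive \eqref{eq:integralpsi}, exactly as in Lemma~\ref{lem:remainder3-4}.

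The main obstacle I expect is the bookkeeping in the mixed and $\psi\psi$ pieces: one has to check that after extracting $e^{-\psi_\rho(x)}$ the leftover double integral really does supply a full power of $\rho$, and does so with a constant depending on $L$ only (no spurious powers of $e^{L}$ beyond $\overline C(L)$). This requires some care near $y=x$, where the inner upper limit of integration tends to $-\infty$, and in treating separately the two subregions $A_\rho$ and $B_\rho$ of \eqref{eq:regions}, on which the kernel satisfies the different bounds \eqref{kernelA}--\eqref{kernelB}; apart from this the estimates are of the same type as those already carried out in Section~\ref{sect:estimates}, and the prefactor $\overline C(L)\rho$ in \eqref{eq:claim-}, which can be forced below $1$, is precisely what produces the geometric improvement $\sigma\mapsto\sigma^2$ of the $e^{\omega}$-coefficient exploited in the proof of Theorem~\ref{thm:decay-}.
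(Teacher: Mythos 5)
Your high-level route is the same as the paper's (insert the refined bound into \eqref{eq:remainder1} and \eqref{eq:remainder3}, split into the four products, reuse the Lemma~\ref{lem:remainder1-2}/\ref{lem:remainder3-4} computations for the $\omega\omega$ part, and treat the weight in $\widetilde R$ via \eqref{eq:integralpsi}), and your absorption of the mixed terms into the $e^{-\psi_\rho(x)}$-term using $\sigma<1$ and $e^{\omega}\le 1$ is legitimate (the paper achieves the same thing with Young's inequality, a cosmetic difference). But the one concrete mechanism you commit to for the $\psi$-bearing pieces does not work as stated. By \eqref{kernel3}--\eqref{kernel4} the weight $e^{-\frac{a}{\rho}z}K(e^{\frac{y-z}{\rho}},1)$ is of size $e^{\frac{y}{\rho}}e^{-\frac{a+1}{\rho}z}$ where $z<y$ and of size $e^{-\frac{a}{\rho}y}$ where $y<z$: in each subregion the \emph{lower} variable carries superexponential growth as it tends to $-\infty$, and the only control of that growth is the $h$-factor attached to that same variable. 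Consequently, in the mixed piece $e^{-\psi_\rho(y)}e^{\omega(z)}$ you cannot "replace $e^{-\psi_\rho(y)}$ by $e^{-\psi_\rho(x)}$ and pull it out": on $\{y<z\}$ the leftover $y$-integral of $e^{-\frac{a}{\rho}y}$ diverges, and there is no $e^{-\psi_\rho(z)}$ weight to keep in this piece, since the $z$-factor is $e^{\omega(z)}$. Likewise, in the $\psi\psi$ piece neither factor can be discarded wholesale: pulling out the $y$-factor makes $\{y<z\}$ divergent, pulling out the $z$-factor makes $\{z<y\}$ divergent. So the issue is not merely "whether a full power of $\rho$ survives", as you put it; the naive extraction destroys integrability altogether.

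The missing device — and the heart of the paper's proof — is to keep the $\psi$-weight on the dangerous variable and integrate it \emph{exactly} against the kernel weight, using $\frac{\de}{\de y}e^{-\psi_\rho(y)}=\frac1\rho e^{-\frac{a}{\rho}y}Q(y)e^{-\psi_\rho(y)}$ together with $Q\ge\frac12$, so that for instance $\int_{-\infty}^{x}e^{-\frac{a}{\rho}y}e^{-\psi_\rho(y)}\de y\le 2\rho\, e^{-\psi_\rho(x)}$ and $\int_y^x e^{-\psi_\rho(z)}\de z\le 2\rho\, e^{\frac{a}{\rho}x}e^{-\psi_\rho(x)}$; where this is not directly applicable one uses the half-power splitting $e^{-\psi_\rho}=e^{-\frac12\psi_\rho}\cdot e^{-\frac12\psi_\rho}$ and the monotonicity of $e^{-\frac12\psi_\rho(\cdot)-\frac{a}{\rho}(\cdot)}$, $e^{-\frac12\psi_\rho(\cdot)-\frac{1}{\rho}(\cdot)}$ from Lemma~\ref{lem:psi}. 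It is precisely this exact integration that simultaneously supplies the factor $\rho$ and the endpoint value $e^{-\psi_\rho(x)}$ claimed in \eqref{eq:claim-}; the paper organizes it through the inner integral $P(x,y)$ (Step~1 of its proof) and then a second exact integration in $y$ (Step~2), before feeding the resulting bound on $R[h]$ into the weighted integral defining $\widetilde R[h]$ (Step~3). With your extraction step replaced by this argument, the rest of your plan goes through.
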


\begin{proof}
We prove the lemma by using the bound \eqref{eq:assumpt-} in the formulas \eqref{eq:remainder1} and \eqref{eq:remainder3} for the remainder terms $R[h]$ and $\widetilde{R}[h]$. We assume in the following that $x<L\rho$. As
\begin{align} \label{eq:R0}
|R[h](x)| \leq \frac{1}{\rho}\int_{-\infty}^x \de y \int_{-\infty}^x \expay e^{\frac{a}{\rho}(y-z)} K(\expyz,1)|h(y)||h(z)| \de z\,,
\end{align}
it is convenient to first find a bound for the inner integral
\begin{align*}
P(x,y) := \int_{-\infty}^x e^{\frac{a}{\rho}(y-z)} K(\expyz,1)|h(z)|\de z\,,
\qquad\qquad\text{for }y\leq x <L\rho.
\end{align*}

\medskip
\noindent\textit{Step 1: bounds on $P(x,y)$.}
We claim that
\begin{equation}\label{eq:P0}
P(x,y) \leq C(L)\rho e^{\frac{a}{\rho}x}e^{-\psi_\rho(x)} + C(L)\sigma\rho e^{\omega(x)}\,,
\qquad\text{for }x < L\rho,\, y\leq x,
\end{equation}
for a constant $C(L)$ depending on $L$ but not on $h$, $\rho$, and $\sigma$.
By \eqref{kernel3}--\eqref{kernel4} we have
\begin{align} \label{eq:P1}
P(x,y)
& \leq C \int_{-\infty}^y e^{\frac{a+1}{\rho}(y-z)}|h(z)|\de z + C\int_{y}^x |h(z)|\de z \nonumber \\
& \xupref{eq:assumpt-}{\leq} C \int_{-\infty}^y e^{\frac{a+1}{\rho}(y-z)} \Bigl(e^{-\psi_\rho(z)} + \sigma e^{\omega(z)} \Bigl)\de z
+ C \int_{y}^x\Bigl(e^{-\psi_\rho(z)} + \sigma e^{\omega(z)} \Bigl)\de z \nonumber \\
& \leq C\int_{-\infty}^y e^{\frac{a+1}{\rho}(y-z)}e^{-\psi_\rho(z)}\de z + C\int_y^x e^{-\psi_\rho(z)}\de z + C(L)\sigma\rho e^{\omega(x)}\,,
\end{align}
where the bound on the integrals involving the term $e^{\omega(z)}$ is obtained by elementary computations, using the explicit form of $\omega(z)$.
We next consider the first integral in \eqref{eq:P1}: with the notation $\bar{y}:=\min\{y,-L\rho\}$ we have, using in particular the monotonicity property $e^{-\frac12\psi_\rho(z)-\frac{z}{\rho}}\leq e^{-\frac12\psi_\rho(\bar{y})-\frac{\bar{y}}{\rho}}$ for $z<\bar{y}$, which is proved in Lemma~\ref{lem:psi},
\begin{align} \label{eq:P2}
\int_{-\infty}^y e^{\frac{a+1}{\rho}(y-z)}e^{-\psi_\rho(z)}\de z
& \leq \int_{-\infty}^{\bar{y}} e^{\frac{a+1}{\rho}(y-z)}e^{-\psi_\rho(z)}\de z
+ e^{-\psi_\rho(x)} \int_{\bar{y}}^y e^{\frac{a+1}{\rho}(y-z)}\de z \nonumber\\
& \leq e^{\frac{a}{\rho}y} e^{\frac{1}{\rho}(y-\bar{y})} e^{-\frac12\psi_\rho(\bar{y})} \int_{-\infty}^y e^{-\frac{a}{\rho}z}e^{-\frac12\psi_\rho(z)}\de z
+ C\rho e^{(a+2)L}e^{\frac{a}{\rho}x}e^{-\psi_\rho(x)} \nonumber\\
& \xupref{lem:Q2}{\leq} 2e^{2L}e^{\frac{a}{\rho}y}e^{-\frac12\psi_\rho(y)} \int_{-\infty}^y e^{-\frac{a}{\rho}z}Q(z)e^{-\frac12\psi_\rho(z)}\de z
+ C(L)\rho e^{\frac{a}{\rho}x}e^{-\psi_\rho(x)} \nonumber\\
& \xupref{eq:derivativepsi}{=} 4 e^{2L} \rho e^{\frac{a}{\rho}y}e^{-\psi_\rho(y)}
+ C(L)\rho e^{\frac{a}{\rho}x}e^{-\psi_\rho(x)}\,.
\end{align}
For the second integral in \eqref{eq:P1} we have instead
\begin{align} \label{eq:P3}
\int_y^x e^{-\psi_\rho(z)}\de z
\xupref{lem:Q2}{\leq} 2 e^{\frac{a}{\rho}x}\int_y^x e^{-\frac{a}{\rho}z}Q(z)e^{-\psi_\rho(z)}\de z
\leq 2\rho e^{\frac{a}{\rho}x} e^{-\psi_\rho(x)}\,,
\end{align}
where we again computed explicitly the integral, recalling \eqref{eq:derivativepsi}.
The claim \eqref{eq:P0} follows now from \eqref{eq:P1}, \eqref{eq:P2}, and \eqref{eq:P3}.

\medskip
\noindent\textit{Step 2: bounds on $R[h]$.}
We now claim that
\begin{equation} \label{eq:R1}
|R[h](x)|\leq C(L)\rho \bigl( e^{-2\psi_\rho(x)} + \sigma^2 e^{\omega(x)} \bigr)
\qquad\text{for }x < L\rho
\end{equation}
(for a possibly larger constant $C(L)$, depending as before on $L$ but not on $h$, $\rho$, and $\sigma$).
The claim is obtained by inserting \eqref{eq:P0} into \eqref{eq:R0}. Indeed,
\begin{align} \label{eq:R2}
|R[h](x)|
& \leq \frac{1}{\rho}\int_{-\infty}^x \expay P(x,y) |h(y)| \de y \nonumber\\
& \upupref{eq:assumpt-}{eq:P0}{\leq} C(L) \Bigl( e^{\frac{a}{\rho}x}e^{-\psi_\rho(x)} + \sigma e^{\omega(x)} \Bigr) \biggl( \int_{-\infty}^x \expay e^{-\psi_\rho(y)}\de y + \sigma\int_{-\infty}^x \expay e^{\omega(y)}\de y\biggr) \nonumber\\
& \leq C(L) \Bigl( e^{\frac{a}{\rho}x}e^{-\psi_\rho(x)} + \sigma e^{\omega(x)} \Bigr) \Bigl( \rho e^{-\psi_\rho(x)} + \sigma\rho e^{\omega(x)}e^{-\frac{a}{\rho}x} \Bigr) \nonumber \\
& \leq C(L)\rho \Bigl( e^{-2\psi_\rho(x)}e^{\frac{a}{\rho}x} + \sigma^2 e^{2\omega(x)}e^{-\frac{a}{\rho}x} \Bigr)\,,
\end{align}
where to go from the second to the third line (where we possibly have a larger constant $C(L)$) we computed explicitly the two integrals, and in particular
\begin{equation*}
\int_{-\infty}^{x}e^{-\frac{a}{\rho}y}e^{-\psi_\rho(y)}\de y
\xupref{lem:Q2}{\leq} 2\int_{-\infty}^x e^{-\frac{a}{\rho}y}Q(y)e^{-\psi_\rho(y)}\de y
\xupref{eq:derivativepsi}{=} 2\rho e^{-\psi_\rho(x)}\,.
\end{equation*}
The last inequality in \eqref{eq:R2} follows instead by Young's inequality.
Hence \eqref{eq:R1} follows by observing that $e^{\frac{a}{\rho}x}\leq e^{aL}$, $e^{\omega(x)}e^{-\frac{a}{\rho}x}\leq e^{aL}$.

\medskip
\noindent\textit{Step 3: bounds on $\widetilde{R}[h]$.}
We finally turn to the proof of \eqref{eq:claim-}. By the definition \eqref{eq:remainder3} of $\widetilde{R}[h]$ and by the previous step we have
\begin{align*}
|\widetilde{R}[h](x)|
& \leq |R[h](x)|+ \frac{1}{\rho}\int_x^{L\rho} e^{-(\psi_\rho(x)-\psi_\rho(y))} \expay Q(y) |R[h](y)| \de y \\
& \xupref{eq:R1}{\leq} C(L)\rho \bigl( e^{-2\psi_\rho(x)} + \sigma^2 e^{\omega(x)} \bigr) + C(L) e^{-\psi_\rho(x)} \int_{x}^{L\rho} e^{-\psi_\rho(y)} \expay Q(y)\de y \\
& \qquad\qquad + C(L)\sigma^2\int_x^{L\rho} e^{-(\psi_\rho(x)-\psi_\rho(y))} e^{-\frac{a}{\rho}y} Q(y) e^{\omega(y)}\de y \\
& \xupref{eq:derivativepsi}{\leq} C(L)\rho \bigl( e^{-2\psi_\rho(x)} + \sigma^2 e^{\omega(x)} \bigr) + C(L)\rho e^{-\psi_\rho(x)} \\
& \qquad\qquad + C(L)\sigma^2 e^{-\psi_\rho(x)} \int_x^{L\rho} e^{\psi_\rho(y)} e^{-\frac{a}{\rho}y} Q(y) e^{\omega(y)}\de y\,.
\end{align*}
Hence to prove \eqref{eq:claim-} it only remains to estimate the last term in the previous expression by a quantity of the form $C(L)\rho ( e^{-\psi_\rho(x)} + \sigma^2 e^{\omega(x)})$. For $x\in(-L\rho,L\rho)$ we have
\begin{align*}
\sigma^2 e^{-\psi_\rho(x)} \int_x^{L\rho} e^{\psi_\rho(y)} e^{-\frac{a}{\rho}y} Q(y) e^{\omega(y)} \de y
& \leq \sigma^2e^{-\psi_\rho(x)} \int_{x}^{L\rho} e^{\psi_\rho(y)} e^{-\frac{a}{\rho}y} Q(y) \de y \\
& \xupref{eq:integralpsi}{\leq} \sigma^2 \rho = \sigma^2\rho e^{\omega(x)}\,.
\end{align*}
In the case $x\leq -L\rho$ we have instead
\begin{align*}
\sigma^2 e^{-\psi_\rho(x)} & \int_x^{L\rho} e^{\psi_\rho(y)} e^{-\frac{a}{\rho}y}Q(y) e^{\omega(y)}\de y \\
& \leq e^{-\psi_\rho(x)}\int_{-L\rho}^{L\rho} e^{\psi_\rho(y)} e^{-\frac{a}{\rho}y} Q(y) \de y
+ \sigma^2 e^{-\psi_\rho(x)} \int_x^{-L\rho} e^{\psi_\rho(y)} e^{-\frac{a}{\rho}y}Q(y)e^{\frac{m}{\rho}(y+L\rho)} \de y \\
& \upupref{eq:integralpsi}{lem:psi2}{\leq} \rho e^{-\psi_\rho(x)} e^{\psi_\rho(-L\rho)} + \sigma^2 e^{-\frac12\psi_\rho(x)} e^{\frac{m}{\rho}(x+L\rho)} \int_x^{-L\rho} e^{\frac12\psi_\rho(y)}e^{-\frac{a}{\rho}y}Q(y)\de y \\
& \leq C(L)\rho e^{-\psi_\rho(x)} + 2\sigma^2 \rho e^{\omega(x)}\,,
\end{align*}
where in the last passage we used in particular the bound
\begin{equation*}
\psi_\rho(-L\rho) = \frac{1}{\rho}\int_{-L\rho}^{L\rho} e^{-\frac{a}{\rho}y}Q(y)\de y
\xupref{lem:Q2}{\leq} \frac{3}{2\rho}\int_{-L\rho}^{L\rho}e^{-\frac{a}{\rho}y}\de y
= \frac{3}{2a}\bigl( e^{aL}-e^{-aL} \bigr) \,.
\end{equation*}
This completes the proof of the lemma.
\end{proof}


\section{Asymptotic decay of the solution at \texorpdfstring{$\infty$}{infinity}} \label{sect:decay+}

The last tile missing in the proof of Theorem~\ref{thm:exist} is the behaviour of the solution in the region $\{x>L\rho\}$, and in particular the non-negativity of $h$ in this interval and the characterization of its exact decay as $x\to\infty$.
Let therefore $h\in X_{L,\rho}$ be the solution to \eqref{equation}--\eqref{constraints} constructed in Theorem~\ref{thm:fixedpoint}, for $L>\bar{L}$ and $\rho\in(0,\bar{\rho}(L))$. 
In the main result of this section, Theorem~\ref{thm:decay+}, we show that $h$ satisfies
\begin{equation} \label{eq:asimp+}
h(x) \sim e^{-x} \qquad\text{as }x\to\infty.
\end{equation}
The proof requires a preliminary bound on the Lipschitz constant of the solution, which we establish in the following lemma.
We recall that by Theorem~\ref{thm:decay-} we have for every $x\in\R$
\begin{equation} \label{parameters4}
|h(x)| \leq 2e^{\lambda(x)}, \qquad\qquad
 \lambda(x):=
 \begin{cases}
  -\psi_\rho(x) & \text{for }x< L\rho,\\
 -\frac12(x-L\rho) & \text{for }x\geq L\rho.
 \end{cases}
\end{equation}
The bound for $x<L\rho$ is given implicitly in terms of the function $\psi_\rho$ introduced in \eqref{psi}, but as a consequence of \eqref{lem:psi2} one easily finds
\begin{equation} \label{parameters4b}
|h(x)| \leq 2e^{-\psi_\rho(x)} \leq 2e^{\frac{2m}{\rho}(x+L\rho)} \qquad\text{for }x<-L\rho,
\end{equation}	
which guarantees in particular an exponential decay of $h$ as $x\to-\infty$.
We introduce a notation for the difference quotient (for $\tau>0$)
\begin{equation} \label{diffquot}
D_\tau h(x) := \frac{h(x) - h(x-\tau)}{\tau}\,.
\end{equation}

\begin{lemma} \label{lem:diffquot}
There exists a constant $A_L$ (depending on the kernel and on $L$, but not on $\tau$ and $\rho$), such that, by possibly choosing a smaller $\bar{\rho}(L)$,
\begin{equation} \label{eq:diffquot}
|D_\tau h(x)| \leq \frac{A_L}{\rho}e^{-\frac12(x-L\rho)} \qquad\text{for every }x\geq L\rho,
\end{equation}
for all $\tau\in(0,\rho)$ and $\rho\in(0,\bar{\rho}(L))$.
\end{lemma}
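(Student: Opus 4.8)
The plan is to differentiate (formally, then rigorously via difference quotients) the equation \eqref{eq:approx2} valid for $x\ge L\rho$, namely
\[
h(x) = \frac{1}{1+\rho}\int_x^\infty h(z)\de z + R[h](x)\,,
\]
and to bootstrap the decay bound \eqref{parameters4} on $h$ into a decay bound on $D_\tau h$. Applying the operator $D_\tau$ to this identity gives, for $x\ge L\rho+\tau$ (so that the whole interval $[x-\tau,x]$ lies in the region where \eqref{eq:approx2} holds),
\[
D_\tau h(x) = -\frac{1}{1+\rho}\,\frac{1}{\tau}\int_{x-\tau}^x h(z)\de z + D_\tau R[h](x)\,.
\]
The first term is harmless: by \eqref{parameters4} it is bounded in absolute value by $\frac{C}{\tau}\int_{x-\tau}^x e^{-\frac12(z-L\rho)}\de z\le C e^{-\frac12(x-\tau-L\rho)}\le C e^{\frac{\tau}{2}}e^{-\frac12(x-L\rho)}\le C e^{-\frac12(x-L\rho)}$ since $\tau<\rho<1$. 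So everything reduces to estimating $D_\tau R[h](x)$, where $R[h]$ is the explicit three-term expression \eqref{eq:remainder2}.

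The main work is therefore to differentiate each of the three integrals in \eqref{eq:remainder2}. For the first two (the product $\frac1\rho\int_x^\infty \expay h(y)\de y\cdot\int_x^\infty h(z)\de z$ and the double integral $\frac1\rho\int_{-\infty}^x\de y\int_x^\infty(\expaz K(\expyz,1)-\expay)h(y)h(z)\de z$), the $x$-dependence enters only through the limits of integration; applying $D_\tau$ produces boundary terms of exactly the same structure as the bounds already obtained in Case~3 of Lemma~\ref{lem:remainder1-2}, but with the integrand evaluated near $z=x$ rather than integrated, which costs a factor $\frac1\rho$ (hence the $\frac{A_L}{\rho}$ in the statement) and, for the double integral, the prefactor $e^{-\frac{a}{\rho}\cdot}$ in the kernel difference \eqref{kernel3} is harmless because we can use $\expaz K(\expyz,1)-\expay=O(e^{-\frac{a}{\rho}y}e^{\frac{\delta}{\rho}(y-z)})$ and integrate the $y$-variable against the decay of $h$, exactly as in the proof of the bound on $R_2[h]$, using \eqref{eq:prelim2}. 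All of these contributions are bounded by $\frac{C}{\rho}(e^{-aL}+\rho e^{aL})e^{-\frac12(x-L\rho)}$, which after fixing $L$ and shrinking $\bar\rho(L)$ gives the claimed form.

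The genuinely delicate term is the third integral in \eqref{eq:remainder2}, $\frac1\rho\int_{-\infty}^x\de y\int_{x+\rho\ln(1-e^{(y-x)/\rho})}^x \expaz K(\expyz,1)h(y)h(z)\de z$, because here $x$ appears both as an integration limit \emph{and} inside the lower endpoint $x+\rho\ln(1-e^{(y-x)/\rho})$ of the inner integral, and this endpoint's derivative in $x$ blows up as $y\to x$. The way around this, as in the analysis of $R_3$, is to split the domain $\Omega_\rho$ into $A_\rho$ and $B_\rho$ (see \eqref{eq:regions}) and, in the delicate part $A_\rho$ where $y$ is within $\rho\ln2$ of $x$, to use the bound \eqref{parameters4b}/\eqref{parameters4} on $h$ itself on this thin slab rather than trying to control the geometry precisely: one estimates $|h(y)|,|h(z)|\le Ce^{-\frac12(x-L\rho)}$ for $(y,z)\in A_\rho$, pulls these out, and is left with $\frac1\rho$ times the \emph{area-type} integral $\iint_{A_\rho}\expay\de y\de z$ (respectively $\iint_{B_\rho}\expaz\expyz$), whose $x$-derivative is again just a boundary term of the same size divided by $\rho$; crucially the change in the domain $A_\rho$ under $x\mapsto x-\tau$ is a region of measure $O(\tau)$ on which the integrand is $O(e^{-(x-L\rho)})$ uniformly, so the contribution is $\le \frac{C}{\rho}\cdot\frac{\tau}{\tau}\cdot e^{-(x-L\rho)}\cdot(\text{mass factors})\le \frac{A_L}{\rho}e^{-\frac12(x-L\rho)}$ using \eqref{eq:prelim1}. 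I expect this bookkeeping for the moving lower endpoint near the diagonal $y=x$ to be the main obstacle; it is handled exactly because on the problematic slab the two factors of $h$ together already furnish the full exponential gain $e^{-(x-L\rho)}$, more than enough to absorb one lost power of $\rho$. Finally, for $x\in[L\rho,L\rho+\tau)$ one argues directly from \eqref{eq:approx2} and the bound on $h$ near $L\rho$ from both sides (using Theorem~\ref{thm:decay-} for $x<L\rho$), which is a trivial case. Collecting all contributions and choosing $A_L$ to dominate the finitely many $L$-dependent constants, then shrinking $\bar\rho(L)$, yields \eqref{eq:diffquot}.
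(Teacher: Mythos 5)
Your reduction to estimating $D_\tau R[h]$ via \eqref{eq:approx2} is fine, and the first two terms of \eqref{eq:remainder2} (where $x$ enters only through integration limits) can indeed be handled as boundary terms. The genuine gap is in the third term, the integral over $\Omega_\rho$. Since $\Omega_\rho(x-\tau)$ is the translate of $\Omega_\rho(x)$ by $(-\tau,-\tau)$, the ``change of domain'' is \emph{not} a region of measure $O(\tau)$ with uniformly controlled integrand: near the diagonal the inner endpoint $x+\rho\ln(1-e^{(y-x)/\rho})$ moves with speed $(1-e^{(y-x)/\rho})^{-1}\to\infty$ as $y\to x$, and the two boundary contributions --- the slice $y\approx x$ coming from the outer limit and the sliver along the moving inner endpoint --- are each of order $e^{x/\rho}$ (on $B_\rho$ the kernel contributes $e^{\frac{y-z}{\rho}}$ with $z$ possibly far below $x$, where only $|h(y)|\le Ce^{-\frac12(x-L\rho)}$ is small while $|h(z)|$ is merely bounded and $e^{-\frac{a}{\rho}z}$ is large; note also that in $A_\rho$ it is $z$, not $y$, that lies within $\rho\ln 2$ of $x$, so you never have both factors of $h$ small where you need them). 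Only the \emph{difference} of these two huge boundary terms is small, and writing that difference out (change variables along the curve $e^{\frac{z-x}{\rho}}=1-e^{\frac{y-x}{\rho}}$) produces increments $h(x)-h(y)$ with $y\in(x-\rho\ln2,x)$ --- i.e.\ precisely the difference quotient you are trying to bound. So the step ``the contribution is a boundary term of measure $O(\tau)$ with integrand $O(e^{-(x-L\rho)})$'' fails, and no direct estimate in terms of the decay of $h$ alone comes out of this computation.

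Because of this feedback the correct estimate is necessarily implicit in $D_\tau h$: the paper differences \eqref{equation} after translating variables so that the domain $\Omega_\rho$ is common, which yields the inequality \eqref{diffquot0} in which $\int_{-\infty}^x e^{-\frac{a}{\rho}y}|D_\tau h(y)|\,\de y$ and $\Delta_\tau(x)$ appear on the right-hand side, and then closes it by a discrete Gronwall-type induction on the lattice $x=\ell\rho$ (claims \eqref{diffquot11}--\eqref{diffquot12}), starting from $\ell\to-\infty$ where $\Delta_\tau\to0$ and propagating through \emph{all} of $\R$, not only $x\ge L\rho$. Your plan contains neither the cancellation between the two boundary contributions nor any bootstrap/absorption of the resulting $D_\tau h$-terms, so the key step of the proposal does not go through as written. (Tellingly, exactly this increment structure reappears as the term $I_4$ in Lemma~\ref{lem:remainderfin}, where it is estimated \emph{using} Lemma~\ref{lem:diffquot}; this is a sign that the Lipschitz bound cannot be extracted from the pointwise decay of $h$ alone.)
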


\begin{proof}
We set
\begin{equation*}
\Delta_\tau(x) := \sup_{y\in(x-\rho,x)}|D_\tau h(y)|\,.
\end{equation*}
The conclusion will be achieved by proving a uniform decay estimate on $\Delta_\tau$, using an iteration argument.
Since we do not need to keep track of the dependence on $L$ of the constants appearing in the following estimates, along this proof the letter $c$ will denote a generic constant, depending only on the kernel and $L$, which may change from line to line. We divide the proof into two steps.

\medskip\noindent
\textit{Step 1.} We claim that for all $x\in\R$ and $\tau\in(0,\rho)$
\begin{equation} \label{diffquot0}
|D_\tau h(x)| \leq c\Bigl( 1 + \frac{1}{\rho}e^{-\frac{a}{\rho}x} \Bigr) e^{\lambda(x)}
+ c\Delta_\tau(x)\int_{-\infty}^x e^{-\frac{a}{\rho}y}|h(y)|\de y
+ c e^{\lambda(x)} \int_{-\infty}^x e^{-\frac{a}{\rho}y} |D_\tau h(y)|\de y \,.
\end{equation}
To prove \eqref{diffquot0}, we apply the difference quotient operator $D_\tau$ to the equation \eqref{equation}: in particular, we split the integral on the right-hand side of \eqref{equation} in the two regions above and below the $y$-axis (see Figure~\ref{fig:domains}), and we obtain after a change of variables in the second integral
\begin{align} \label{diffquot1}
|D_\tau h(x)|
& \leq \frac{1}{\tau\rho}\bigg| \int_{x-\tau}^x \int_x^\infty \expaz K(e^\frac{y-z}{\rho},1)h(y)h(z)\de z \de y\bigg| \nonumber\\
& \qquad + \frac{1}{\tau\rho}\bigg|\int_{-\infty}^{x-\tau} \int_{x-\tau}^x \expaz K(e^\frac{y-z}{\rho},1)h(y)h(z)\de z \de y \bigg| \nonumber\\
& \qquad + \frac{1}{\tau\rho} \iint_{\Omega_\rho} K(e^\frac{y-z}{\rho},1) \Big| \expaz h(y)h(z) - e^{-\frac{a}{\rho}(z-\tau)}h(y-\tau)h(z-\tau)\Big| \de y\de z \,,
\end{align}
where $\Omega_\rho$ is defined in \eqref{eq:regions}.
For the first term in \eqref{diffquot1} we have by \eqref{kernel3}
\begin{align} \label{diffquot2}
\frac{1}{\tau\rho}\bigg|\int_{x-\tau}^x \int_x^\infty \expaz K(e^\frac{y-z}{\rho},1)h(y)h(z)\de z \de y\bigg|
& \leq \frac{c}{\tau\rho}\int_{x}^\infty|h(z)|\de z \int_{x-\tau}^x \expay|h(y)|\de y \nonumber \\
& \leq \frac{c}{\rho} e^{-\frac{a}{\rho}x}e^{\lambda(x)}\,;
\end{align}
in the last inequality we used the bound $\int_x^\infty|h(x)|\de x \leq \int_{-\infty}^\infty |h(x)|\de x\leq c$ (which follows directly from the decay $|h(x)|\leq 2e^{\omega(x)}$ in the definition \eqref{space} of the space $X_{L,\rho}$), together with $|h(y)|\leq 2e^{\lambda(y)}\leq ce^{\lambda(x)}$ for $y\in(x-\tau,x)$. Similarly, the second term in \eqref{diffquot1} is bounded by
\begin{align} \label{diffquot3}
\frac{1}{\tau\rho}\bigg|\int_{-\infty}^{x-\tau} \int_{x-\tau}^x \expaz K(e^\frac{y-z}{\rho},1)h(y)h(z)\de z \de y \bigg|
& \leq \frac{c}{\tau\rho}\int_{-\infty}^{x-\tau}\expay|h(y)|\de y \int_{x-\tau}^x |h(z)|\de z \nonumber \\
& \leq c e^{\lambda(x)}\,,
\end{align}
where we used also \eqref{eq:prelim1} in the last inequality.
It remains to estimate the third term in \eqref{diffquot1}, which can be written as
\begin{align} \label{diffquot4}
\frac{1}{\tau\rho} \iint_{\Omega_\rho} &K(e^\frac{y-z}{\rho},1) \Big| \expaz h(y)h(z) - e^{-\frac{a}{\rho}(z-\tau)}h(y-\tau)h(z-\tau)\Big| \de y\de z \nonumber\\
& \leq \frac{1}{\rho}\iint_{\Omega_\rho} K(e^\frac{y-z}{\rho},1) \expaz |D_\tau h(y)||h(z)|\de y \de z \nonumber\\
& \qquad + \frac{1}{\rho}\iint_{\Omega_\rho} K(e^\frac{y-z}{\rho},1) \expaz |h(y-\tau)||D_\tau h(z)|\de y \de z \nonumber\\
& \qquad + \frac{1}{\tau\rho}\iint_{\Omega_\rho} K(e^\frac{y-z}{\rho},1) |e^{-\frac{a}{\rho}z}-e^{-\frac{a}{\rho}(z-\tau)}| |h(y-\tau)||h(z-\tau)|\de y \de z \nonumber\\
& =: J_1 + J_2 + J_3\,.
\end{align}
We split each of the three integrals $J_i$ into two terms $J_{i,1}$ and $J_{i,2}$, defined as the corresponding integrals over the two domains $A_\rho$ and $B_\rho$, introduced in \eqref{eq:regions}, respectively. The behaviour of the kernel in these regions is given by \eqref{kernelA} and \eqref{kernelB}.
Then we have for $J_1$ the bounds
\begin{align} \label{diffquot5}
J_{1,1}
&= \frac{1}{\rho}\iint_{A_\rho} K(e^\frac{y-z}{\rho},1)\expaz|D_\tau h(y)||h(z)|\de y \de z \nonumber\\
& \xupref{kernelA}{\leq} \frac{c}{\rho}\int_{x-\rho\ln2}^{x} \de z\, |h(z)| \int_{x+\rho\ln(1-e^\frac{z-x}{\rho})}^x \expay |D_\tau h(y)|\de y \nonumber\\
&\leq c e^{\lambda(x)} \int_{-\infty}^x \expay |D_\tau h(y)|\de y\,,
\end{align}
and
\begin{align} \label{diffquot6}
J_{1,2}
&= \frac{1}{\rho}\iint_{B_\rho} K(e^\frac{y-z}{\rho},1)\expaz|D_\tau h(y)||h(z)|\de y \de z \nonumber\\
& \xupref{kernelB}{\leq} \frac{c}{\rho} \int_{-\infty}^{x-\rho\ln 2} \de z \int_{x+\rho\ln(1-e^\frac{z-x}{\rho})}^x e^{\frac{y}{\rho}} e^{-\frac{a+1}{\rho}z}|D_\tau h(y)| |h(z)| \de y \nonumber\\
& \leq \frac{c}{\rho} \Delta_\tau(x) \int_{-\infty}^{x-\rho\ln2} \de z \,e^{-\frac{a+1}{\rho}z} |h(z)|\int_{x+\rho\ln(1-e^\frac{z-x}{\rho})}^x e^\frac{y}{\rho}\de y \nonumber\\
& = c\Delta_\tau(x) \int_{-\infty}^{x-\rho\ln2} \expaz |h(z)|\de z\,.
\end{align}
We next consider the two terms which constitute $J_2$: for the part over $A_\rho$ we have
\begin{align} \label{diffquot7}
J_{2,1}
& \xupref{kernelA}{\leq} \frac{c}{\rho} \int_{x-\rho\ln2}^{x} \de z \, |D_\tau h(z)| \int_{x+\rho\ln(1-e^\frac{z-x}{\rho})}^x \expay |h(y-\tau)|\de y \nonumber\\
& \leq c\Delta_\tau(x) \int_{-\infty}^x \expay |h(y-\tau)|\de y
\leq c\Delta_\tau(x) \int_{-\infty}^x \expay |h(y)|\de y\,,
\end{align}
while for the part over $B_\rho$
\begin{align} \label{diffquot8}
J_{2,2}
& \xupref{kernelB}{\leq} \frac{c}{\rho} \int_{-\infty}^{x-\rho\ln 2} \de z \int_{x+\rho\ln(1-e^\frac{z-x}{\rho})}^x e^{\frac{y}{\rho}} e^{-\frac{a+1}{\rho}z}|D_\tau h(z)| |h(y-\tau)|\de y \nonumber\\
& \leq \frac{c}{\rho} e^{\lambda(x)} \int_{-\infty}^{x-\rho\ln2}\de z \, e^{-\frac{a+1}{\rho}z} |D_\tau h(z)|\int_{x+\rho\ln(1-e^\frac{z-x}{\rho})}^x e^\frac{y}{\rho}\de y \nonumber\\
& = c e^{\lambda(x)} \int_{-\infty}^{x-\rho\ln2} \expaz |D_\tau h(z)|\de z\,.
\end{align}
We finally consider the third integral $J_3$ appearing in \eqref{diffquot4}, which we split as usual in the part over $A_\rho$
\begin{align} \label{diffquot9}
J_{3,1}
& \xupref{kernelA}{\leq} \frac{c|e^{\frac{a\tau}{\rho}}-1|}{\rho\tau} \int_{x-\rho\ln2}^{x} \de z \int_{x+\rho\ln(1-e^\frac{z-x}{\rho})}^x \expay |h(y-\tau)| |h(z-\tau)|\de y \nonumber\\
& \leq \frac{c}{\rho} e^{\lambda(x)} \int_{-\infty}^x \expay |h(y-\tau)|\de y
\xupref{eq:prelim1}{\leq} c e^{\lambda(x)}\,,
\end{align}
and the part over $B_\rho$
\begin{align} \label{diffquot10}
J_{3,2}
& \xupref{kernelB}{\leq} \frac{c|e^{\frac{a\tau}{\rho}}-1|}{\rho\tau} \int_{-\infty}^{x-\rho\ln 2} \de z \int_{x+\rho\ln(1-e^\frac{z-x}{\rho})}^x e^{\frac{y-z}{\rho}} \expaz |h(y-\tau)| |h(z-\tau)|\de y \nonumber\\
& \leq \frac{c}{\rho^2} e^{\lambda(x)} \int_{-\infty}^{x-\rho\ln2}\de z \, e^{-\frac{a+1}{\rho}z} |h(z-\tau)|\int_{x+\rho\ln(1-e^\frac{z-x}{\rho})}^x e^\frac{y}{\rho}\de y \nonumber\\
& = \frac{c}{\rho} e^{\lambda(x)} \int_{-\infty}^{x-\rho\ln2} \expaz |h(z-\tau)|\de z
\xupref{eq:prelim1}{\leq} c e^{\lambda(x)}\,.
\end{align}
Collecting \eqref{diffquot2}--\eqref{diffquot10} and inserting them in \eqref{diffquot1}, we conclude that the claim \eqref{diffquot0} holds.

\medskip\noindent
\textit{Step 2.} We now use \eqref{diffquot0} to get the conclusion by an iteration argument.
We set $A_\ell:=\Delta_\tau(\ell\rho)$ for $\ell\in\Z$, and we prove by induction that
\begin{equation} \label{diffquot11}
A_\ell \leq \frac{\cbar}{\rho} \, e^{-a\ell}e^{\lambda(\ell\rho)}
\qquad\text{for all }\ell\in\Z,\, \ell\leq0,
\end{equation}
and
\begin{equation} \label{diffquot12}
A_\ell \leq \frac{\cbar}{\rho} \, e^{\lambda(\ell\rho)}
\qquad\text{for all }\ell\in\Z,\, \ell\geq0,
\end{equation}
for some uniform constant $\cbar$ (independent of $\rho$, $\tau$ and $\ell$),
for every $\tau\in(0,\rho)$ and for every $\rho\in(0,\bar{\rho}(L))$, with $\bar{\rho}(L)$ small enough.
The conclusion of the lemma follows directly from the previous claims. Indeed, for every $x\geq L\rho$, choosing $\ell\in\Z$ such that $(\ell-1)\rho< x\leq \ell\rho$ we have
\begin{equation*}
|D_\tau h(x)|
\leq \Delta_\tau(\ell\rho)
= A_\ell
\leq \frac{\cbar}{\rho}e^{\lambda(\ell\rho)}
= \frac{\cbar}{\rho}e^{-\frac12(\ell\rho-L\rho)} 
\leq \frac{\cbar}{\rho} e^{-\frac12(x-L\rho)}\,.
\end{equation*}

\medskip
We are therefore left with the proofs of \eqref{diffquot11} and \eqref{diffquot12}.
First notice that by \eqref{parameters4}
\begin{equation} \label{diffquot-1}
\Delta_\tau(x) \leq \frac{c}{\tau}e^{\lambda(x)}\,,
\end{equation}
which implies in particular $\lim_{|x|\to\infty}\Delta_\tau(x)=0$.
By writing the term
\begin{equation*}
\int_{-\infty}^x \expay |D_\tau h(y)|\de y
= \sum_{n=0}^\infty\int_{x-(n+1)\rho}^{x-n\rho} \expay |D_\tau h(y)|\de y
\leq c\rho e^{-\frac{a}{\rho}x} \sum_{n=0}^\infty \Delta_\tau(x-n\rho)e^{an}
\end{equation*}
and inserting this inequality in \eqref{diffquot0} we get
\begin{equation*}
|D_\tau h(x)|
 \leq c\Bigl( 1 + \frac{1}{\rho}e^{-\frac{a}{\rho}x} \Bigr) e^{\lambda(x)}
 + c\Delta_\tau(x)\int_{-\infty}^x e^{-\frac{a}{\rho}y}|h(y)|\de y
 + c\rho e^{-\frac{a}{\rho}x} e^{\lambda(x)} \sum_{n=0}^\infty \Delta_\tau(x-n\rho)e^{an}\,.
\end{equation*}
In turn, using the fact that $\sup_{y\in(x-\rho,x)}\Delta_\tau(y)\leq\Delta_\tau(x)+\Delta_\tau(x-\rho)$, we deduce from \eqref{diffquot0}
\begin{align*}
\Delta_\tau(x)
& \leq c\Bigl( 1 + \frac{1}{\rho}e^{-\frac{a}{\rho}x} \Bigr) e^{\lambda(x)}
+ c \bigl( \Delta_\tau(x) + \Delta_\tau(x-\rho) \bigr) \int_{-\infty}^x \expay |h(y)|\de y \\
& \qquad + c\rho e^{-\frac{a}{\rho}x}e^{\lambda(x)} \sum_{n=0}^\infty \Delta_\tau(x-n\rho)e^{an}\,,
\end{align*}
and by choosing $\bar{\rho}(L)$ sufficiently small we can absorb the terms with $\Delta_\tau(x)$ in the left-hand side and obtain that for every $x\in\R$
\begin{equation*}
\Delta_\tau(x)
\leq c\Bigl( 1 + \frac{1}{\rho}e^{-\frac{a}{\rho}x} \Bigr) e^{\lambda(x)}
+ c \Delta_\tau(x-\rho)\int_{-\infty}^x \expay |h(y)|\de y
+ c\rho e^{-\frac{a}{\rho}x} e^{\lambda(x)}\sum_{n=1}^\infty \Delta_\tau(x-n\rho)e^{an}\,.
\end{equation*}
By computing the previous expression at $x=\ell\rho$, for $\ell\in\Z$, we find that for some uniform constant $\cbar$ (independent of $\rho$, $\tau$ and $\ell$)
\begin{equation} \label{diffquot13}
A_\ell \leq  \frac{\cbar}{4} \Bigl( 1 + \frac{1}{\rho}e^{-a\ell} \Bigr) e^{\lambda(\ell\rho)}
+ \cbar A_{\ell-1} \int_{-\infty}^{\ell\rho}\expay|h(y)|\de y
+ \cbar\rho e^{\lambda(\ell\rho)}\sum_{k=-\infty}^{\ell-1}A_k e^{-ka}\,.
\end{equation}
In particular, for $\ell\leq0$ we have
\begin{align*}
\int_{-\infty}^{\ell\rho}\expay|h(y)|\de y
&\xupref{parameters4}{\leq} 2\int_{-\infty}^{\ell\rho}\expay e^{-\psi_\rho(y)}\de y \\
&\xupref{lem:Q2}{\leq} 4\int_{-\infty}^{\ell\rho}\expay Q(y)e^{-\psi_\rho(y)}\de y
\xupref{eq:derivativepsi}{=} 4\rho e^{-\psi_\rho(\ell\rho)}\,,
\end{align*}
and inserting this estimate in \eqref{diffquot13} we find
\begin{equation} \label{diffquot14}
A_\ell \leq \frac{\cbar}{4} \Bigl( 1 + \frac{1}{\rho}e^{-a\ell} \Bigr) e^{\lambda(\ell\rho)}
+ 4\cbar\rho A_{\ell-1}e^{\lambda(\ell\rho)}
+ \cbar\rho e^{\lambda(\ell\rho)}\sum_{k=-\infty}^{\ell-1}A_k e^{-ka} \qquad\text{for }\ell\leq0.
\end{equation}

We now show by induction that the claim \eqref{diffquot11} holds.
Notice first that $A_\ell\to0$ as $\ell\to-\infty$ by \eqref{diffquot-1}, and that the series $\sum_{k=-\infty}^\infty A_k e^{-ka}$ is convergent: indeed, using once more \eqref{diffquot-1},
\begin{align*}
\sum_{k=-\infty}^\infty A_k e^{-ka}
&= \sum_{k=-\infty}^\infty \Delta_\tau(k\rho)e^{-ka}
\leq \frac{c}{\tau} \sum_{k=-\infty}^\infty e^{\lambda(k\rho)}e^{-ka}
\leq \frac{c}{\tau\rho} \int_{-\infty}^{\infty} e^{-\frac{a}{\rho}z}e^{\lambda(z)}\de z <\infty\,,
\end{align*}
the last integral being bounded by $c\rho$ as a consequence of \eqref{parameters4}--\eqref{parameters4b}.
Therefore there exists $\ell_0\leq0$, possibly depending on $\tau$ and $\rho$, such that for every $\ell\leq \ell_0$ 
\begin{equation*}
A_\ell \leq \frac{1}{16}, \qquad \sum_{k=-\infty}^{\ell} A_ke^{-ka} \leq \frac14\,.
\end{equation*}
Using these inequalities in \eqref{diffquot14}, we immediately get \eqref{diffquot11} for all $\ell\leq \ell_0$.
We now check the induction step: assuming that \eqref{diffquot11} holds for all $\ell\leq\bar{\ell}$, for some $\bar{\ell}\leq -1$, let us prove that \eqref{diffquot11} holds also for $\bar{\ell}+1$: we have
\begin{align*}
\sum_{k=-\infty}^{\bar{\ell}} A_k e^{-ka}
\xupref{diffquot11}{\leq} \frac{\cbar}{\rho} \sum_{k=-\infty}^{\bar{\ell}} e^{\lambda(k\rho)}e^{-2ka}
\leq \frac{c\cbar}{\rho^2} \int_{-\infty}^{0} e^{-\frac{2a}{\rho}z}e^{\lambda(z)}\de z
\leq \frac{c\cbar}{\rho}\,,
\end{align*}
for another uniform constant $c$ independent of $\rho$, $\tau$, and $\ell$ (the last inequality follows by \eqref{parameters4b}).
Then by \eqref{diffquot14}
\begin{align*}
A_{\bar{\ell}+1}
& \leq \frac{\cbar}{4} \Bigl(1+\frac{1}{\rho}e^{-a(\bar{\ell}+1)}\Bigr) e^{\lambda((\bar{\ell}+1)\rho)}
+ 4\cbar\rho A_{\bar{\ell}}e^{\lambda((\bar{\ell}+1)\rho)}
+ \cbar\rho e^{\lambda((\bar{\ell}+1)\rho)}\sum_{k=-\infty}^{\bar{\ell}}A_k e^{-ka} \\
& \leq \frac{\cbar}{4} \Bigl(1+\frac{1}{\rho}e^{-a(\bar{\ell}+1)}\Bigr) e^{\lambda((\bar{\ell}+1)\rho)}
+4\cbar^2e^{-a\bar{\ell}}e^{\lambda((\bar{\ell}+1)\rho)}
+ c\cbar^2 e^{\lambda((\bar{\ell}+1)\rho)}\,.
\end{align*}
By reducing $\bar{\rho}(L)$ if necessary, we therefore obtain \eqref{diffquot11}.

To conclude the proof, it only remains to prove the second claim \eqref{diffquot12}.
We again proceed by induction. Notice first that \eqref{diffquot12} holds for $\ell=0$, thanks to \eqref{diffquot11}.
Assume then that \eqref{diffquot12} holds for all $\ell\leq\bar{\ell}$, for some $\bar{\ell}\geq 0$, and let us prove that \eqref{diffquot12} holds also for $\bar{\ell}+1$: we have
\begin{align*}
\sum_{k=-\infty}^{\bar{\ell}} A_k e^{-ka}
& \leq \frac{\cbar}{\rho} \sum_{k=-\infty}^{-1} e^{\lambda(k\rho)}e^{-2ka}
+ \frac{\cbar}{\rho} \sum_{k=0}^{\bar{\ell}} e^{\lambda(k\rho)}e^{-ka} \\
& \leq \frac{c\cbar}{\rho^2} \int_{-\infty}^{0} e^{-\frac{2a}{\rho}z}e^{\lambda(z)}\de z
+  \frac{c\cbar}{\rho^2} \int_{0}^{(\bar{\ell}+1)\rho} e^{-\frac{a}{\rho}z}e^{\lambda(z)}\de z
\leq \frac{c\cbar}{\rho}\,,
\end{align*}
for another uniform constant $c$ independent of $\rho$, $\tau$ and $\ell$.
Then by \eqref{diffquot13}
\begin{align*}
A_{\bar{\ell}+1}
& \leq \frac{\cbar}{4} \Bigl(1+\frac{1}{\rho}e^{-a(\bar{\ell}+1)}\Bigr) e^{\lambda((\bar{\ell}+1)\rho)}
+ \cbar A_{\bar{\ell}} \int_{-\infty}^{(\bar{\ell}+1)\rho} e^{-\frac{a}{\rho}z}|h(z)|\de z
+ \cbar\rho e^{\lambda((\bar{\ell}+1)\rho)}\sum_{k=-\infty}^{\bar{\ell}}A_k e^{-ka} \\
& \leq \frac{\cbar}{4} \Bigl(1+\frac{1}{\rho}\Bigr) e^{\lambda((\bar{\ell}+1)\rho)}
+ c\cbar^2e^{\lambda(\bar{\ell}\rho)}
+ c\cbar^2 e^{\lambda((\bar{\ell}+1)\rho)}
\end{align*}
(where the integral is bounded by means of \eqref{eq:prelim1}). By eventually choosing $\bar{\rho}$ small enough, we conclude that also \eqref{diffquot11} holds.
\end{proof}

Having a bound on the Lipschitz constant of $h$ at hand, in order to obtain the desired decay \eqref{eq:asimp+} it is now convenient to write the equation \eqref{equation} in the following form:
\begin{equation} \label{finequation}
h(x) = \int_x^\infty h(z)\de z + (1+\rho)H(x)\,,
\end{equation}
with the remainder term given by
\begin{align} \label{finremainder}
H(x) &= \frac{1}{\rho} \int_{-\infty}^x \de y \int_{x+\rho\ln(1-e^{\frac{y-x}{\rho}})}^\infty  e^{-\frac{a}{\rho}z} K(e^{\frac{y-z}{\rho}},1) h(y)h(z) \de z \nonumber\\
& \qquad\qquad -\frac{\rho}{1+\rho}h(x) - \frac{1}{1+\rho}\int_x^\infty h(z)\de z\,.
\end{align}
We employ Lemma~\ref{lem:diffquot} to show that $H(x)$ decays faster than $e^{-x}$.

\begin{lemma}\label{lem:remainderfin}
There exist a constant $A(L,\rho)$ such that
\begin{equation*}
|H(x)| \leq A(L,\rho) e^{-\frac32(x-L\rho)} \qquad\text{for every }x\geq L\rho\,.
\end{equation*}
The constant $A(L,\rho)$ depends on $L$ and $\rho$ and can be made arbitrarily small by choosing $L$ sufficiently large and, in turn, $\rho$ small enough.
\end{lemma}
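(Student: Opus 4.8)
The plan is to exploit the identity \eqref{finremainder}, in which the double integral equals $\rho\,h(x)$ by \eqref{equation}, and to decompose its integration domain so as to isolate the two leading terms that will cancel the remaining ones. Splitting the domain into the part $\{(y,z):y<x<z\}$ above the $y$-axis and the two regions $A_\rho$, $B_\rho$ of \eqref{eq:regions}, I would write
\[
H(x)=\frac1\rho\bigl(I_1(x)+I_A(x)+I_B(x)\bigr)-\frac{\rho}{1+\rho}h(x)-\frac{1}{1+\rho}\int_x^\infty h(z)\de z
\]
and prove, for $x\geq L\rho$, that $\frac1\rho I_1(x)=\frac1{1+\rho}\int_x^\infty h(z)\de z+E_1(x)$, that $\frac1\rho I_B(x)=\frac{\rho}{1+\rho}h(x)+E_2(x)$, and that $\frac1\rho I_A(x)=E_3(x)$, with each $|E_i(x)|\leq A(L,\rho)\,e^{-\frac32(x-L\rho)}$; the leading terms then cancel the two subtracted terms, giving $H(x)=E_1(x)+E_2(x)+E_3(x)$.

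For $I_1$, since $y<x<z$ one has $e^{\frac{y-z}{\rho}}<1$, so \eqref{kernel3} gives $\bigl|e^{-\frac a\rho z}K(e^{\frac{y-z}{\rho}},1)-e^{-\frac a\rho y}\bigr|\leq K_0\,e^{-\frac a\rho y}e^{\frac\delta\rho(y-z)}$; hence $\frac1\rho I_1(x)$ equals $\bigl(\frac1\rho\int_{-\infty}^x e^{-\frac a\rho y}h(y)\de y\bigr)\int_x^\infty h(z)\de z$ up to an error which, by \eqref{eq:prelim2} and the decay \eqref{parameters4}, is bounded by $C\rho\,e^{-\delta L}e^{-(\frac\delta\rho+\frac12)(x-L\rho)}$, hence by $C\rho\,e^{-\delta L}e^{-\frac32(x-L\rho)}$ provided $\rho<\delta$. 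By the $\gamma$-moment constraint in \eqref{constraints} the prefactor equals $\frac1{1+\rho}$ minus the tail $\frac1\rho\int_x^\infty e^{-\frac a\rho y}h(y)\de y=O\bigl(e^{-aL}e^{-(\frac a\rho+\frac12)(x-L\rho)}\bigr)$, which multiplied by $\int_x^\infty h\leq Ce^{-\frac12(x-L\rho)}$ stays fast-decaying; this is the term $E_1$, whose $\rho$-free part is $\sim e^{-aL}$.

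The heart of the argument is $I_B$, and here Lemma~\ref{lem:diffquot} is indispensable. In $B_\rho$ the constraint forces $y\in\bigl(x+\rho\ln(1-e^{\frac{z-x}{\rho}}),x\bigr)\subset(x-\rho\ln2,x)$, so $y$ lies in a possibly very short interval just left of $x$. Replacing $K(e^{\frac{y-z}{\rho}},1)$ by $e^{\frac{y-z}{\rho}}$ via \eqref{regions3} (error $\leq K_0 e^{\frac{1-\delta}{\rho}(y-z)}$) and writing $h(y)=h(x)+(h(y)-h(x))$, the leading part becomes, after the exact computation $\int_{x+\rho\ln(1-e^{\frac{z-x}{\rho}})}^x e^{\frac y\rho}\de y=\rho\,e^{\frac z\rho}$,
\[
h(x)\int_{-\infty}^{x-\rho\ln2}e^{-\frac a\rho z}h(z)\de z=\frac{\rho}{1+\rho}h(x)+O\bigl(e^{-aL}e^{-\frac32(x-L\rho)}\bigr)\,,
\]
the error being $-h(x)\int_{x-\rho\ln2}^\infty e^{-\frac a\rho z}h(z)\de z$, controlled by \eqref{constraints} and \eqref{parameters4}. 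For the correction, since $0<x-y<-\rho\ln(1-e^{\frac{z-x}{\rho}})<\rho\ln2<\rho$, Lemma~\ref{lem:diffquot} gives $|h(y)-h(x)|\leq(x-y)\,|D_{x-y}h(x)|\leq-\rho\ln(1-e^{\frac{z-x}{\rho}})\cdot\frac{A_L}{\rho}e^{-\frac12(x-L\rho)}$; inserting this, integrating in $y$, and using $-\ln(1-e^{\frac{z-x}{\rho}})\leq 2e^{\frac{z-x}{\rho}}$ (valid for $z<x-\rho\ln2$), the correction is at most $2A_L\,e^{-\frac12(x-L\rho)}e^{-\frac x\rho}\int_{-\infty}^x e^{\frac\gamma\rho z}|h(z)|\de z$, and the error from \eqref{regions3} is handled the same way with an additional $e^{-\delta(\cdot)}$ gain. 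The same quantity $e^{-\frac x\rho}\int_{-\infty}^x e^{\frac\gamma\rho z}|h(z)|\de z$ also dominates $\frac1\rho I_A(x)$: in $A_\rho$ one has $z\in(x-\rho\ln2,x)$, so $|h(z)|\leq Ce^{-\frac12(x-L\rho)}$ and one keeps $e^{-\frac a\rho z}\leq Ce^{-\frac a\rho x}$ instead of using the lossy \eqref{kernelA}; since for $y<x-\rho\ln2$ the $z$-interval has length $\leq2\rho\,e^{\frac{y-x}{\rho}}$, using \eqref{kernel3} and $a+\gamma=1$ gives $\frac1\rho I_A(x)\leq Ce^{-\frac12(x-L\rho)}e^{-\frac x\rho}\int_{-\infty}^x e^{\frac\gamma\rho z}|h(z)|\de z+C\rho\,e^{-aL}e^{-\frac32(x-L\rho)}$, the last term coming from the $O(\rho^2)$-area piece where both variables sit near $x$. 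Finally, splitting $\int_{-\infty}^x e^{\frac\gamma\rho z}|h(z)|\de z$ at $L\rho$ and using the sharp decay $|h|\leq2e^{-\psi_\rho}$ on $\{z<L\rho\}$ from Theorem~\ref{thm:decay-} (rather than $|h|\leq2e^\omega$, which would not yield $L$-uniform constants) one obtains $e^{-\frac x\rho}\int_{-\infty}^x e^{\frac\gamma\rho z}|h(z)|\de z\leq C(L)\bigl(\rho\,e^{-\frac1\rho(x-L\rho)}+\rho\,e^{-(\frac a\rho+\frac12)(x-L\rho)}\bigr)$.

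Putting the pieces together, $|H(x)|\leq A(L,\rho)\,e^{-\frac32(x-L\rho)}$ with $A(L,\rho)=Ce^{-aL}+C(L)(1+A_L)\rho$ for $\rho<\delta$: the $\rho$-free part $Ce^{-aL}$ forces $L$ to be chosen large first, and then the remaining part, which carries the saving factor $\rho$ (times $A_L$ or an exponential of $L$), is made arbitrarily small by taking $\rho$ small depending on $L$ --- exactly the order of quantification in the statement. I expect the only genuine difficulty to be the correction term in $I_B$: a crude bound $|h(y)-h(x)|\leq|h(y)|+|h(x)|$ only produces a term of size $O(\rho\,e^{-\frac12(x-L\rho)})$, which is neither small compared to $h(x)$ itself nor of the required order; it is precisely the Lipschitz estimate of Lemma~\ref{lem:diffquot}, combined with the observation that the $y$-interval in $B_\rho$ shrinks like $\rho\,e^{\frac{z-x}{\rho}}$ as $z\to-\infty$, that restores the decay $e^{-\frac32(x-L\rho)}$.
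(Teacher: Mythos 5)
Your proposal is correct and follows essentially the same route as the paper's proof: the same splitting of the double integral into the region above the $y$-axis and the regions $A_\rho$, $B_\rho$, the same cancellations produced by the constraint $M_\gamma(h)=\frac{1}{1+\rho}$, the exact computation $\int e^{y/\rho}\de y=\rho e^{z/\rho}$ on $B_\rho$, and the crucial use of Lemma~\ref{lem:diffquot} to replace $h(y)$ by $h(x)$ there (the paper's terms $I_1,\dots,I_5$ are just your $E_1$, $I_A$, $I_B$ regrouped). Only minor bookkeeping differs (e.g.\ you drop an $e^{aL}$ factor from the $I_1$ error, and the remark that $|h|\leq 2e^{\omega}$ would not suffice is unnecessary), but since those terms all carry a factor $\rho$ this does not affect the stated smallness of $A(L,\rho)$.
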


\begin{proof}
In order to prove the result we manipulate the expression \eqref{finremainder} of $H$ and we write it as the sum of five terms,
$H(x) = \sum_{i=1}^5 I_i$, which are explicitly given by
\begin{align*}
I_1 := \frac{1}{\rho}\int_{-\infty}^x \de y \int_{x}^\infty \expaz K(e^{\frac{y-z}{\rho}},1)h(y)h(z) \de z - \frac{1}{\rho}\int_{-\infty}^x \expay h(y)\de y \int_x^\infty h(z)\de z\,,
\end{align*}
\begin{align*}
I_2 := \frac{1}{\rho}\int_{x-\rho\ln2}^x \de z \int_{x+\rho\ln(1-e^{\frac{z-x}{\rho}})}^x \expaz K(e^{\frac{y-z}{\rho}},1)h(y)h(z)\de y\,,
\end{align*}
\begin{align*}
I_3 := \frac{1}{\rho}\int_{-\infty}^{x-\rho\ln2} \de z \int_{x+\rho\ln(1-e^{\frac{z-x}{\rho}})}^x \Bigl( \expaz K(e^{\frac{y-z}{\rho}},1) - e^{-\frac{a+1}{\rho}z}e^{\frac{y}{\rho}}\Bigr) h(y)h(z)\de y\,,
\end{align*}
\begin{align*}
I_4 := \frac{1}{\rho}\int_{-\infty}^{x-\rho\ln2} \de z \int_{x+\rho\ln(1-e^{\frac{z-x}{\rho}})}^x e^{-\frac{a+1}{\rho}z}e^{\frac{y}{\rho}} h(y)h(z)\de y
-h(x)\int_{-\infty}^{x-\rho\ln2}\expaz h(z)\de z\,,
\end{align*}
\begin{align*}
I_5 &:= \frac{1}{\rho}\int_{-\infty}^x \expay h(y)\de y \int_x^\infty h(z)\de z
+ h(x)\int_{-\infty}^{x-\rho\ln2}\expaz h(z)\de z \\
& \qquad\qquad -\frac{\rho}{1+\rho}h(x) - \frac{1}{1+\rho}\int_x^\infty h(z)\de z\,.
\end{align*}
Notice in particular that the first integral $I_1$ is over the region above the $y$-axis, the second term $I_2$ is over the domain $A_\rho$, and $I_3$, $I_4$ are integrals over $B_\rho$ (see Figure~\ref{fig:domains}). We proceed to estimate the five terms separately, for $x\geq L\rho$. For the first term we have
\begin{align} \label{fin1}
|I_1|
& \leq \frac{1}{\rho}\int_{-\infty}^x \int_{x}^\infty \Big| \expaz K(e^{\frac{y-z}{\rho}},1) -\expay \Big| |h(y)||h(z)| \de z \de y \nonumber\\
& \xupref{kernel3}{\leq} \frac{C}{\rho} \int_{-\infty}^x \int_{x}^\infty \expay e^{\frac{\delta}{\rho}(y-z)} |h(y)||h(z)| \de z \de y \nonumber\\
& \leq \frac{C}{\rho}\int_{-\infty}^x e^{\frac{\delta-a}{\rho}y}|h(y)|\de y \int_x^{\infty}e^{-\frac{\delta}{\rho}z}e^{-\frac12(z-L\rho)}\de z \nonumber\\
& \xupref{eq:prelim2}{\leq} C\rho e^{aL} e^{-\frac12(x-L\rho)}e^{-\frac{\delta}{\rho}x}\,.
\end{align}

We next consider the second integral, which is over the region $A_\rho$: by \eqref{kernelA} and Fubini's Theorem we obtain
\begin{align*}
|I_2|
& \leq \frac{C}{\rho}\int_{x-\rho\ln2}^x \de z \int_{x+\rho\ln(1-e^{\frac{z-x}{\rho}})}^x \expay |h(y)||h(z)|\de y \nonumber\\
& \leq \frac{C}{\rho}\int_{-\infty}^{x-\rho\ln2} \de y \,\expay|h(y)| \int_{x+\rho\ln(1-e^{\frac{y-x}{\rho}})}^x |h(z)|\de z \nonumber\\
& \qquad \qquad+ \frac{C}{\rho}\int_{x-\rho\ln2}^x\expay|h(y)|\de y \int_{x-\rho\ln2}^x|h(z)|\de z\,.
\end{align*}
To proceed we recall that for $\xi\in(x-\rho\ln2,x)$ we have $|h(\xi)|\leq Ce^{-\frac12(x-L\rho)}$; also bearing in mind the elementary inequality $|\ln(1-t)|\leq Ct$ for $t=e^{\frac{y-x}{\rho}}\in(0,\frac12)$ we get
\begin{align}\label{fin2}
|I_2|
& \leq Ce^{-\frac12(x-L\rho)} \int_{-\infty}^{x-\rho\ln2}\expay|h(y)| |\ln(1-e^{\frac{y-x}{\rho}})| \de y + Ce^{-(x-L\rho)}\int_{x-\rho\ln2}^x\expay\de y \nonumber\\
& \leq Ce^{-\frac12(x-L\rho)} \int_{-\infty}^{x}e^{\frac{y-x}{\rho}}\expay|h(y)| \de y + C\rho e^{-\frac{a}{\rho}x}e^{-(x-L\rho)}\nonumber\\
& \leq Ce^{-\frac12(x-L\rho)} \biggl( e^{-\frac{x}{2\rho}}\int_{-\infty}^{\frac{x}{2}}\expay|h(y)| \de y + \int_{\frac{x}{2}}^{x} \expay \de y \biggr) + C\rho e^{-\frac{a}{\rho}x}e^{-(x-L\rho)}\nonumber\\
& \xupref{eq:prelim1}{\leq} Ce^{-\frac12(x-L\rho)} \biggl( \rho e^{aL}e^{-\frac{x}{2\rho}} + \rho e^{-\frac{a}{2\rho}x} \biggr) + C\rho e^{-\frac{a}{\rho}x}e^{-(x-L\rho)}\,.
\end{align}

The third integral $I_3$ is instead over the region $B_\rho$: in this case using \eqref{regions3} one has
\begin{align}\label{fin3}
|I_3|
& \leq \frac{C}{\rho}\int_{-\infty}^{x-\rho\ln2} \de z \int_{x+\rho\ln(1-e^{\frac{z-x}{\rho}})}^x \expaz e^{\frac{1-\delta}{\rho}(y-z)} |h(y)||h(z)|\de y \nonumber\\
& \leq \frac{C}{\rho} e^{-\frac12(x-L\rho)} \int_{-\infty}^{x-\rho\ln2} \de z \, e^{\frac{\delta-a-1}{\rho}z}|h(z)| \int_{x+\rho\ln(1-e^{\frac{z-x}{\rho}})}^x e^{\frac{1-\delta}{\rho}y} \de y \nonumber\\
& \leq C e^{-\frac12(x-L\rho)}e^{-\frac{\delta}{\rho}x} \int_{-\infty}^{x-\rho\ln2} e^{\frac{\delta-a}{\rho}z}|h(z)| \de z
\xupref{eq:prelim2}{\leq} C\rho e^{aL}e^{-\frac12(x-L\rho)}e^{-\frac{\delta}{\rho}x}\,,
\end{align}
where in the third passage we used the inequality
\begin{equation*}
\int_{x+\rho\ln(1-e^{\frac{z-x}{\rho}})}^x e^{\frac{1-\delta}{\rho}y} \de y = \frac{\rho}{1-\delta}e^{\frac{1-\delta}{\rho}x} \Bigl( 1 - (1-e^{\frac{z-x}{\rho}})^{1-\delta} \Bigr)
\leq C\rho e^{\frac{1-\delta}{\rho}x}e^{\frac{z-x}{\rho}}\,,
\end{equation*}
which holds since $e^{\frac{z-x}{\rho}}\in(0,\frac12)$.

In order to bound the term $I_4$, we use the result in Lemma~\ref{lem:diffquot}, which gives in particular
\begin{equation*}
|h(y)-h(x)| \leq \frac{A_L}{\rho}|x-y|e^{-\frac12(x-L\rho)}
\qquad\text{for }y\in(x-\rho,x).
\end{equation*}
Therefore
\begin{align}\label{fin4}
|I_4|
& \leq \frac{1}{\rho}\int_{-\infty}^{x-\rho\ln2} \de z \,\expaz |h(z)| \int_{x+\rho\ln(1-e^{\frac{z-x}{\rho}})}^x e^{\frac{y-z}{\rho}} \big|h(y)-h(x)\big| \de y \nonumber\\
& \leq \frac{A_L e^{-\frac12(x-L\rho)}}{\rho^2} \int_{-\infty}^{x-\rho\ln2} \de z \,\expaz |h(z)| \int_{x+\rho\ln(1-e^{\frac{z-x}{\rho}})}^x e^{\frac{y-z}{\rho}}(x-y) \de y \nonumber\\
& \leq A_L e^{-\frac12(x-L\rho)} \int_{-\infty}^{x-\rho\ln2} \expaz e^{\frac{x-z}{\rho}} |h(z)| |\ln(1-e^{\frac{z-x}{\rho}})|^2 \de z \nonumber\\
& \leq C A_L e^{-\frac12(x-L\rho)} \int_{-\infty}^{x} \expaz e^{\frac{z-x}{\rho}} |h(z)| \de z \nonumber\\
& \leq C A_L e^{-\frac12(x-L\rho)} \biggl( \rho e^{aL}e^{-\frac{x}{2\rho}} + \rho e^{-\frac{a}{2\rho}x} \biggr)\,,
\end{align}
where the last inequality follows as in the last passages of \eqref{fin2}.

Finally, for the last term $I_5$ we have, recalling that $h$ satisfies the constraints \eqref{constraints},
\begin{align} \label{fin5}
|I_5|
& \leq \frac{1}{\rho}\int_{x}^\infty \expay |h(y)| \de y \int_x^\infty |h(z)|\de z + |h(x)|\int_{x-\rho\ln2}^{\infty}\expaz |h(z)| \de z \nonumber\\
& \xupref{parameters4}{\leq} \frac{1}{\rho}\int_{x}^\infty \expay e^{-\frac12(y-L\rho)}\de y \int_x^\infty e^{-\frac12(z-L\rho)}\de z + e^{-\frac12(x-L\rho)}\int_{x-\rho\ln2}^{\infty}\expaz e^{-\frac12(z-L\rho)} \de z \nonumber\\
& \leq C \bigl(1+\rho\bigr) e^{-\frac{a}{\rho}x} e^{-(x-L\rho)}
\leq Ce^{-aL}e^{-\frac{a}{\rho}(x-L\rho)}e^{-(x-L\rho)}\,.
\end{align}

By collecting \eqref{fin1}--\eqref{fin5}, we eventually obtain a bound on the function $H$ of the form
\begin{align*}
|H(x)|
\leq C_L\rho \Bigl( e^{-\frac{\delta}{\rho}x} + e^{-\frac{x}{2\rho}} + e^{-\frac{a}{2\rho}x} \Bigr)e^{-\frac12(x-L\rho)} + Ce^{-aL} e^{-\frac{a}{\rho}(x-L\rho)}e^{-(x-L\rho)}\,,
\end{align*}
for a uniform constant $C$, depending only on the kernel, and a constant $C_L$ possibly depending also on $L$. As we can assume that $\rho$ is so small that
\begin{equation*}
\frac{\delta}{\rho}>1\,, \qquad \frac{1}{2\rho}>1\,,\qquad\frac{a}{2\rho}>1\,,
\end{equation*}
the proof of the lemma is completed.
\end{proof}

We are now in position to prove the explicit decay of the solution $h$ at $\infty$.

\begin{theorem}[Decay at $\infty$] \label{thm:decay+}
	There exist $L_2\geq\bar{L}$ and $\rho_2:(L_2,\infty)\to(0,\rho_0)$ such that for every $L > L_2$ and $\rho\in(0,\rho_2(L))$ the solution $h$ to \eqref{equation}--\eqref{constraints} determined in Theorem~\ref{thm:fixedpoint} satisfies
	\begin{equation} \label{eq:decay+}
	\textstyle\frac14 e^{-(x-L\rho)} \leq h(x) \leq 2e^{-(x-L\rho)} \qquad\text{for all }x \geq L\rho
	\end{equation}
	and
	\begin{equation}\label{eq:decay+bis}
	h(x) = k_{L,\rho} e^{-x} + o(e^{-x}) \qquad\text{as }x\to\infty,
	\end{equation}
	for a positive constant $k_{L,\rho}$, depending on $L$ and $\rho$, with the property that $|k_{L,\rho}-1|$ can be made arbitrarily small by choosing $L$ large enough and, in turn, $\rho$ small enough (depending on $L$).
\end{theorem}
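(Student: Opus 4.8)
The plan is to turn the reformulated equation \eqref{finequation} into a linear first order ODE for the tail mass of $h$ and solve it explicitly, using the decay estimate for the remainder $H$ from Lemma~\ref{lem:remainderfin}. Set $G(x):=\int_x^\infty h(z)\de z$; this is well defined and continuous on all of $\R$ since $h\in L^1(\R)$ by the bound $|h(x)|\le 2e^{\omega(x)}$ in \eqref{space}, and $G'(x)=-h(x)$. Then \eqref{finequation} reads $-G'(x)=G(x)+(1+\rho)H(x)$, that is $\frac{\de}{\de x}\bigl(e^xG(x)\bigr)=-(1+\rho)e^xH(x)$ for $x\ge L\rho$; integrating this identity \emph{forward} from $L\rho$ (we cannot integrate backward from $+\infty$, since the $e^x$-weighted decay of $G$ is precisely what we are trying to prove) gives, for $x\ge L\rho$,
\begin{equation*}
G(x)=e^{-(x-L\rho)}G(L\rho)-(1+\rho)e^{-x}\int_{L\rho}^x e^sH(s)\de s\,,
\end{equation*}
and hence, by \eqref{finequation} once more,
\begin{equation*}
h(x)=e^{-(x-L\rho)}G(L\rho)-(1+\rho)e^{-x}\int_{L\rho}^x e^sH(s)\de s+(1+\rho)H(x)\,.
\end{equation*}
This reduces the whole statement to controlling the single constant $G(L\rho)=\int_{L\rho}^\infty h$ and the integral terms built from $H$.

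The first point is that $G(L\rho)$ is close to $1$: by the mass constraint $M(h)=1$ in \eqref{constraints} one has $G(L\rho)=1-\int_{-\infty}^{L\rho}h(x)\de x$, and the bound $|h(x)|\le 2e^{\omega(x)}$ together with the explicit form of $\omega$ in \eqref{parameters3} yields $\bigl|\int_{-\infty}^{L\rho}h(x)\de x\bigr|\le C(\rho+L\rho)$, which is as small as we wish under the standing assumptions (recall $L\rho\to0$ by \eqref{parameters2bis}); hence $G(L\rho)\to1$, and so $e^{L\rho}G(L\rho)\to1$. Next, by Lemma~\ref{lem:remainderfin} we have $|H(x)|\le A(L,\rho)e^{-\frac32(x-L\rho)}$ with $A(L,\rho)$ arbitrarily small, which gives the two elementary estimates
\begin{equation*}
e^{-x}\int_{L\rho}^x e^s|H(s)|\de s\le 2A(L,\rho)e^{-(x-L\rho)}\,,\qquad |H(x)|\le A(L,\rho)e^{-(x-L\rho)}\qquad(x\ge L\rho)\,.
\end{equation*}
Plugging these into the explicit formula for $h$ gives $h(x)=G(L\rho)e^{-(x-L\rho)}+\theta(x)$ with $|\theta(x)|\le C A(L,\rho)e^{-(x-L\rho)}$; choosing $L$ large and then $\rho$ small enough that $G(L\rho)-C A(L,\rho)\ge\frac14$ and $G(L\rho)+C A(L,\rho)\le2$ (possible since $G(L\rho)\to1$ and $A(L,\rho)\to0$) yields the two-sided bound \eqref{eq:decay+}, and in particular the positivity of $h$ on $[L\rho,\infty)$.

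For the sharper asymptotics \eqref{eq:decay+bis} I would split $\int_{L\rho}^x e^sH(s)\de s=c_H-\int_x^\infty e^sH(s)\de s$ with $c_H:=\int_{L\rho}^\infty e^sH(s)\de s$; by the decay of $H$ this integral converges, $|c_H|\le 2A(L,\rho)e^{L\rho}$, and the tail satisfies $\bigl|\int_x^\infty e^sH(s)\de s\bigr|\le 2A(L,\rho)e^{\frac32 L\rho}e^{-\frac12 x}$, so that $e^{-x}\int_x^\infty e^sH(s)\de s=o(e^{-x})$; likewise $(1+\rho)H(x)=O(e^{-\frac32(x-L\rho)})=o(e^{-x})$. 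Substituting back,
\begin{equation*}
h(x)=k_{L,\rho}e^{-x}+o(e^{-x})\qquad\text{as }x\to\infty,\qquad k_{L,\rho}:=e^{L\rho}G(L\rho)-(1+\rho)c_H\,,
\end{equation*}
which is \eqref{eq:decay+bis}; since $e^{L\rho}G(L\rho)\to1$ and $c_H\to0$, the constant $k_{L,\rho}$ is positive and $|k_{L,\rho}-1|$ can be made arbitrarily small. The thresholds $L_2$ and $\rho_2(L)$ are then fixed so as to meet simultaneously all the smallness requirements appearing above, together with those already needed in Theorem~\ref{thm:fixedpoint}, Theorem~\ref{thm:decay-} and Lemma~\ref{lem:remainderfin}.

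The heavy lifting has already been carried out in Lemma~\ref{lem:diffquot} and Lemma~\ref{lem:remainderfin}, so from this point the proof is essentially a one-line ODE solve followed by bookkeeping. The only genuinely delicate issues are of a structural rather than computational nature: the linear ODE for $G$ must be integrated forward from $x=L\rho$, since the exponential decay of $e^xG(x)$ cannot be assumed at the outset; and one needs the quantitative identification $G(L\rho)=\int_{L\rho}^\infty h\to1$, i.e.\ the fact that essentially the entire unit mass of $h$ sits above the transition layer, which requires combining the mass constraint $M(h)=1$ with the decay bounds for $h$ on $(-\infty,L\rho)$.
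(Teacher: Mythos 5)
Your argument is correct and is essentially the paper's own proof: integrating \eqref{finequation} forward from $L\rho$ yields exactly the representation \eqref{eq:fin0}--\eqref{eq:fin1} with $k_0=\int_{L\rho}^\infty h$, and the two-sided bound and asymptotics then follow from Lemma~\ref{lem:remainderfin} and the closeness of $k_0$ to $1$, just as in the paper. The only (harmless) deviation is that you bound $\int_{-\infty}^{L\rho}h$ by the crude space bound $|h|\leq 2e^{\omega}$, giving $C(\rho+L\rho)$, whereas the paper uses the refined estimate of Theorem~\ref{thm:decay-} to get $4\rho e^{aL}$; both suffice for the smallness needed.
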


\begin{proof}
By integrating \eqref{finequation} we have that $h$ solves
\begin{equation} \label{eq:fin0}
h(x) = k_0 e^{-(x-L\rho)} + (1+\rho)H(x) -(1+\rho)\int_{L\rho}^x e^{y-x}H(y)\de y
\end{equation}
for $x\geq L\rho$. The constant $k_0$ can be computed explicitly:
\begin{equation*}
k_0 = \int_{L\rho}^\infty h(z)\de z = 1 - \int_{-\infty}^{L\rho} h(z)\de z\,,
\end{equation*}
and since by Theorem~\ref{thm:decay-}
\begin{align*}
\bigg| \int_{-\infty}^{L\rho} h(z)\de z \bigg| \leq 2\int_{-\infty}^{L\rho}e^{-\psi_\rho(z)}\de z
\xupref{lem:Q2}{\leq} 4e^{aL}\int_{-\infty}^{L\rho}e^{-\psi_\rho(z)} \expaz Q(z)\de z
\xupref{eq:derivativepsi}{=} 4\rho e^{aL}\,,
\end{align*}
we see that, for fixed $L$, $k_0\to1$ as $\rho\to0$.
	
We rewrite \eqref{eq:fin0} in the following form:
\begin{align} \label{eq:fin1}
h(x) = \biggl(k_0 - (1+\rho)e^{-L\rho}&\int_{L\rho}^\infty e^{y}H(y)\de y \biggr) e^{-(x-L\rho)} \nonumber\\
& \qquad + (1+\rho)H(x) + (1+\rho)\int_{x}^\infty e^{y-x}H(y)\de y\,.
\end{align}
The last two terms on the right hand side of \eqref{eq:fin1} decay faster than $e^{-x}$ as $x\to\infty$: indeed, by Lemma~\ref{lem:remainderfin} we have
\begin{align} \label{eq:fin2}
\bigg| (1+\rho)H(x) + (1+\rho)&\int_{x}^\infty e^{y-x}H(y)\de y \bigg| \nonumber\\
& \leq (1+\rho)A(L,\rho) \biggl( e^{-\frac32(x-L\rho)} + \int_x^\infty e^{y-x}e^{-\frac32(y-L\rho)}\de y \biggr) \nonumber\\
& \leq 3(1+\rho)A(L,\rho) e^{-\frac32(x-L\rho)}\,.
\end{align}
Moreover, using once more Lemma~\ref{lem:remainderfin} we obtain the bound
\begin{align} \label{eq:fin3}
\bigg|(1+\rho)e^{-L\rho}\int_{L\rho}^\infty e^{y}H(y)\de y \bigg|
\leq (1+\rho)A(L,\rho)\int_{L\rho}^\infty e^{-\frac12(y-L\rho)}\de y
= 2(1+\rho)A(L,\rho)\,.
\end{align}
The properties \eqref{eq:decay+} and \eqref{eq:decay+bis} follow now by combining \eqref{eq:fin1}--\eqref{eq:fin3}, recalling the property of $A(L,\rho)$ stated in Lemma~\ref{lem:remainderfin} and that $k_0\to1$ as $\rho\to0$.
\end{proof}

We conclude the paper with the proof of the main result, Theorem~\ref{thm:exist}.

\begin{proof}[Proof of Theorem~\ref{thm:exist}]
Let $L_2$ and $\rho_2(L)$, for $L>L_2$, be given by Theorem~\ref{thm:decay+}.
We can further select two monotone sequences $(L_n)_{n\geq3}$ and $(\rho_n)_{n\geq3}$ with the properties that
$$
L_n\to\infty\,, \qquad \rho_n\to0\,,\qquad \rho_n\in(0,\rho_2(L_n))\,,
$$
and
\begin{equation} \label{eq:fin4}
\lim_{n\to\infty}k_{L_n,\rho_n}\to1\,,
\end{equation} 
where $k_{L,\rho}$ is the constant appearing in Theorem~\ref{thm:decay+}.

We choose $\rho_*:=\rho_3$. If $\rho\in(0,\rho_*)$, then $\rho\in[\rho_{n+1},\rho_n)$ for some $n$.
For this value of $\rho$ we then select $h_\rho$ to be the solution to \eqref{equation}--\eqref{constraints} in $X_{L_n,\rho}$ given by Theorem~\ref{thm:fixedpoint}.
The continuity of $h_\rho$ follows from Remark~\ref{rm:continuity}, and moreover $h_\rho$ enjoys the decay estimates \eqref{eq:decay-} and \eqref{eq:decay+}--\eqref{eq:decay+bis}, proved in Theorem~\ref{thm:decay-} and Theorem~\ref{thm:decay+} respectively.
In particular, by \eqref{eq:decay+bis} we can write
\begin{equation*}
h_\rho(x) = k_\rho e^{-x} +o(e^{-x}) \qquad\text{as }x\to\infty,
\end{equation*}
with $k_\rho\to1$ as $\rho\to0$ thanks to \eqref{eq:fin4}.
By the change of variables \eqref{variables}, we obtain a family of solutions to \eqref{eq:selfsim2} with the desired properties.
\end{proof}


\bigskip
\bigskip
\noindent
{\bf Acknowledgments.}
The authors acknowledge support through the CRC 1060 \textit{The mathematics of emergent effects} at the University of Bonn that is funded through the German Science Foundation (DFG).

\bibliographystyle{siam}
\bibliography{bibliography}

\end{document}